\documentclass[11pt]{amsart}
\usepackage{amssymb}
\usepackage{amscd}
\usepackage{amsmath}
\usepackage{amsmath}  
\usepackage[all]{xy}
\usepackage{mathrsfs}
\usepackage{graphicx}
 \usepackage{color}

\setlength{\textwidth}{6in}
\setlength{\oddsidemargin}{0in}
\setlength{\evensidemargin}{0in}
\setlength{\topmargin}{0in}
\setlength{\headheight}{0.50in}
\setlength{\headsep}{0.50in}
\setlength{\textheight}{8in}
\setlength{\footskip}{0.5in}
\setlength{\topskip}{0in}

\theoremstyle{plain}
\newtheorem{theorem}{Theorem}[section]
\newtheorem{lemma}[theorem]{Lemma}

\newtheorem{proposition}[theorem]{Proposition}
\newtheorem{definition}[theorem]{Definition}
\newtheorem{hypothesis}[theorem]{Hypothesis}
\newtheorem{example}[theorem]{Example}

\newtheorem{remark}[theorem]{Remark}

  1

\newcommand{\eins}{\boldsymbol{1}}

\DeclareMathOperator{\Ext}{Ext}
\DeclareMathOperator{\Gal}{Gal}

\DeclareMathOperator{\Hom}{Hom}

\DeclareMathOperator{\im}{im}

\newcommand{\CC}{\mathbb{C}}

\newcommand{\GG}{\mathbb{G}}
\newcommand{\NN}{\mathbb{N}}
\newcommand{\QQ}{\mathbb{Q}}
\newcommand{\Q}{\mathbb{Q}}
\newcommand{\RR}{\mathbb{R}}

\newcommand{\ZZ}{\mathbb{Z}}
\newcommand{\Z}{\mathbb{Z}}

\newcommand{\calF}{\mathcal{F}}

\newcommand{\calP}{\mathcal{P}}

\newcommand{\co}{\mathcal{O}}

\newcommand{\A}{\mathcal{A}}
\newcommand{\B}{\mathcal{B}}

\newcommand{\W}{\mathcal{W}}

\newcommand{\frp}{\mathfrak{p}}
\newcommand{\p}{\mathfrak{p}}

\newcommand{\cok}{\mathrm{cok}}

\newcommand{\Fit}{\mathrm{Fit}}

\newcommand{\bz}{\mathbb{Z}}
\newcommand{\La}{\Lambda}

\newcommand{\D}{\mathrm{d}}

\begin{document}

\title[On Weil-Stark elements]{On Weil-Stark elements, I: \\
 general properties}

\author{David Burns, Daniel Macias Castillo and Soogil Seo}

\begin{abstract} We construct a canonical family of elements in the reduced exterior power lattices of the unit groups of global fields. We prove that this family recovers the theory of cyclotomic elements in real abelian fields and also establish detailed arithmetic properties of its elements in the general case.\end{abstract}

\address{King's College London,
Department of Mathematics,
London WC2R 2LS,
U.K.}
\email{david.burns@kcl.ac.uk}

\address{Departamento de Matem\'aticas, 
Universidad Aut\'onoma de Madrid, 28049 Madrid (Spain);
and Instituto de Ciencias Matem\'aticas, 28049 Madrid (Spain).}
\email{daniel.macias@icmat.es}

\address{Yonsei University, Department of Mathematics, Seoul, Korea.}
\email{sgseo@yonsei.ac.kr}

\thanks{MSC: 11R27, 11R29 (primary), 11R34, 11R33 (secondary).}

\maketitle
%
\section{Introduction}

Our primary interest is the study of refined versions of Stark's seminal conjectures on the algebraic properties of the values at zero of Artin $L$-series over global fields. The results in the present article provide necessary preliminaries for this and, at the same time, contribute to the classical problem of identifying explicit elements in the unit groups of number fields.    

More precisely, we shall construct a canonical family of elements in the Galois-equivariant reduced exterior power lattices of unit groups of global fields. We refer to these elements as `Weil-Stark elements' since a variant (known to exist unconditionally) of Lichtenbaum's conjectural theory of Weil-\'etale cohomology \cite{lichtenbaum1} plays a  key role in their construction.

We establish some of the detailed arithmetic properties of these elements and also prove that, in the case of real abelian extensions of $\Q$, they recover the classical theory of cyclotomic elements. 
Then, in a sequel to this article, we show that the theory of Weil-Stark elements combines with the formalism of Tamagawa number conjectures that originates with Bloch and Kato to give a new, and essentially axiomatic, approach to the formulation of refinements of Stark's conjectures along with a range of unconditional evidence in support of 
the resulting predictions.

In a little more detail, the main contents of this article are as follows. In \S\ref{LambdaFAA} we review algebraic constructions and results of Sano and the first author that underlie our approach. In \S\ref{ords and bims} we study `locally-Gorenstein' orders and introduce bimodules and functors that are relevant to our theory. In \S\ref{We} we introduce Weil-Stark elements, with the general construction presented in Theorem \ref{main result} and the link to cyclotomic elements established in Theorem \ref{cyclo-weil thm}. Then in \S\ref{sgp} we prove that, over locally-Gorenstein orders, Weil-Stark elements have good `integrality' properties and, in particular, encode detailed structural information about ideal class groups (see Theorem \ref{main result2}). 
\medskip

\noindent{}{\bf Acknowledgements} We warmly thank Alexandre Daoud, Henri Johnston, Andreas Nickel, Takamichi Sano and especially an anonymous referee for some very helpful comments on earlier versions of the material presented here.  
The second author acknowledges support for this article as part of Grants CEX2019-000904-S, PID2019-108936GB-C21 and PID2022-142024NB-I00 funded by MCIN/AEI/ 10.13039/501100011033.

\section{Non-commutative algebra}\label{LambdaFAA} 

For the reader's convenience, in this first section we quickly review relevant aspects of constructions and results from \cite{bses}. To do so we write $\zeta(R)$ for the centre of a ring $R$ and, unless explicitly stated otherwise, we use `$R$-module' to mean `left $R$-module'. 

We fix a Dedekind domain $\Lambda$ that is a subring of $\CC$ and write $\mathcal{F}$ for its field of fractions. We let $\A$ be a $\Lambda$-order in a finite-dimensional separable $\mathcal{F}$-algebra $A$. We set $A_\CC:=\CC\otimes_{\mathcal{F}}A$. Given an $\A$-module $M$ we write $M_{\rm tor}$ for its $\Lambda$-torsion submodule and $M_{\rm tf}$ for its quotient by $M_{\rm tor}$. For any field extension $E$ of $\mathcal{F}$ we set $E\cdot M:=E\otimes_{\Lambda}M$.



We write $\Lambda_{(\frp)}$ for the localisation of $\Lambda$ at $\frp\in {\rm Spec}(\Lambda)$ and, for an $\A$-module $M$, set $M_{(\frp)}:= \Lambda_{(\frp)}\otimes_\Lambda M$. 
 A finitely generated $\A$-module $M$ is `locally-free' if, for every $\frp\in {\rm Spec}(\Lambda)$, the $\A_{(\frp)}$-module $M_{(\frp)}$ is free. In this case, the $\A_{(\frp)}$-rank ${\rm rk}_\A (M)$ of $M_{(\frp)}$ is independent of $\frp$, and $M$ is said to be `invertible' if ${\rm rk}_\A (M)=1$. We often use the fact that a locally-free $\A$-module is projective (by \cite[Prop. (8.19), Vol. I]{curtisr}). 



\subsection{}\label{Whitehead section}For $\frp\in {\rm Spec}(\Lambda)$ we write $\xi(\A_{(\frp)})$ for the $\Lambda_{(\frp)}$-order comprising the $\A_{(\frp)}$-submodule of $\zeta(A)$ generated by  $\{ {\rm Nrd}_A(M): M\in \bigcup_{n\in\NN}M_n(\A_{(\frp)})\}$. It is proved in \cite[Lem. 3.2]{bses} that the `Whitehead order' 
 $\xi(\A):={\bigcap}_{\frp\in {\rm Spec}(\Lambda)} \xi(\A_{(\frp)})$ of $\A$ is a $\Lambda$-order in $\zeta(A)$, that $\xi(\A)=\A$ if $\A$ is commutative and that any surjective map of $\Lambda$-orders $\A\to\mathcal{B}$ induces, upon restriction, a surjective map of $\Lambda$-orders $\xi(\A)\to\xi(\mathcal{B})$.

In general, neither $\xi(\A)\subseteq \zeta(\A)$ nor $\zeta(\A)\subseteq \xi(\A)$, but \cite[Def. 3.6]{bses} gives an explicit ideal $\delta(\A)$ of  $\zeta(\A)$ (that is equal to $\xi(\A) = \A$ if $\A$ is commutative) with $\delta(\A)\cdot\xi(\A)=\delta(\A)$. 

The links between the order $\xi(\A)$ and ideal $\delta(\A)$ and slightly different constructions of Johnston and Nickel \cite{JN} are discussed in \cite[Rem. 3.3 and 3.8]{bses}.

\subsection{}\label{fi review} 
%
For a natural number $m$ we write $[m]$ for the set of integers $i$ with $1\leq i\leq m$ and set $[m]^\ast := \{0\}\cup [m]$.
 Let $M$ be a matrix in ${\rm M}_{d\times d'}(A)$ with $d \ge d'$. Let $\{b_i\}_{i\in[d]}$ be the standard basis of $\A^d$. Then for any $t \in [d']^\ast$ and any $\varphi = (\varphi_i)_{i \in [t]}$ in $\Hom_\A(\A^{d},\A)^t$ we write ${\rm Min}^{d'}_{\varphi}(M)$ for the set of all $d'\times d'$ minors of the matrices $M(J,\varphi)$ that are obtained from $M$ by choosing any $t$-tuple of integers $J = \{i_1,i_2,\cdots , i_t\}$ with $1\le i_1< i_2< \cdots < i_t\le d'$, and setting
\begin{equation*}\label{fitting matrix} M(J,\varphi)_{ij} := \begin{cases} \varphi_{a}(b_i), &\text{if $j = i_a$ for $a \in [t]$}\\
                            M_{ij}, &\text{otherwise.}\end{cases}\end{equation*}
For $a\geq0$, the `$a$-th (non-commutative) Fitting invariant of $M$' is the $\xi(\A)$-ideal
\[ {\rm Fit}_{\A}^a(M) := \xi(\A)\cdot \{{\rm Nrd}_{A}(N): N\in {\rm Min}^{d'}_{\varphi}(M), \, \varphi\in \Hom_{\A}(\A^{d},\A)^t, \, t \le a\}.\]
%

%
%

A `presentation' $h$ of $\A$-modules is an exact sequence of finitely generated $\A$-modules 
$P' \xrightarrow{\theta} P \xrightarrow{} Z \to 0$;
$h$ is said to be `locally-free', resp. `locally-quadratic', if $P$ and $P'$ are locally-free, resp. locally-free with ${\rm rk}_{\A}(P')={\rm rk}_{\A}(P)$. 
If $h$ is locally-free, then 
its $a$-th Fitting invariant is the $\xi(\A)$-ideal  
\[ {\rm Fit}_{\A}^a(h) := {\bigcap}_{\p\in {\rm Spec}(\Lambda)}{\rm Fit}_{\A_{(p)}}^a(M(\theta,\p)),\]
where $M(\theta,\p)$ the matrix of $\theta_{(\p)}$ with respect to a choice of $\A_{(\p)}$-bases of $P'_{(\mathfrak{p})}$ and $P_{(\mathfrak{p})}$. If $\A$ is commutative, then $\Fit^a_\A(h)$ coincides with the classical $a$-th Fitting ideal of the $\A$-module $Z$ (as discussed by Northcott \cite{North}). 

%
%
%
%
%

\subsection{}\label{Rubin section}The ring $A$ decomposes as a direct product ${\prod}_{i\in I}A_i$ over a finite index set $I$, in which each ring $A_i$ is simple. For each $i\in I$ we fix a splitting field $E_i$ for $A_i$ over $\zeta(A_i)$ and a simple $E_i\otimes_{\zeta(A_i)}A_i$-module $V_i$. These choices determine, for each non-negative integer $r$, an `$r$-th reduced exterior power functor' $M \mapsto {{\bigwedge}}^r_AM$ from finitely generated $A$-modules to finitely generated $\zeta(A)$-modules. This functor behaves well under scalar extension and, 
for $s\in [r]^\ast$, there exists a natural duality pairing
$${\bigwedge}_A^rM\times{\bigwedge}_{A^{\rm op}}^s\Hom_A(M,A)\to{\bigwedge}_A^{r-s}M, \quad (m,\varphi)\mapsto\varphi(m).$$

If one fixes ordered $E_i$-bases of the spaces $V_i$, then subsets $\{m_j\}_{j \in [r]}$ of $M$ and $\{\varphi_j\}_{j \in [r]}$ of $\Hom_A(M,A)$ determine elements $\wedge_{j=1}^{j=r}m_j$ of ${\bigwedge}_A^rM$ and $\wedge_{j=1}^{j=r}\varphi_j$ of ${\bigwedge}_{A^{\rm op}}^r\Hom_A(M,A)$. The element $(\wedge_{i=1}^{i=r}\varphi_i)(\wedge_{j=1}^{j=r}m_j)$ 
belongs to $\zeta(A)$ and 
depends only on $\{m_j\}_{j \in [r]}$ and $\{\varphi_j\}_{j \in [r]}$ (cf. \cite[Lem. 4.10]{bses}). If $N$ is a finitely generated $\A$-module, its `$r$-th reduced Rubin lattice'   
$${\bigcap}_\mathcal{A}^rN:=\{a\in{\bigwedge}_A^r(\mathcal{F}\otimes_{\Lambda}N):(\wedge_{i=1}^{i=r}\varphi_i)(a)\in\xi(\mathcal{A})\text{ for all }\varphi_i\in\Hom_\mathcal{A}(N,\mathcal{A})\}$$
is then a $\xi(\A)$-lattice 
that has a range of useful properties (cf. \cite[Th. 4.19]{bses}). 

\begin{remark}\label{normalization rem}{\em 
The above constructions depend, up to isomorphism, on the choice of (bases of) the modules $\{V_i\}_{i \in I}$ (cf. \cite[Rem. 4.4]{bses}). 
However, there are two cases that arise in our theory and for which this ambiguity can be removed (in a compatible fashion). \

\noindent{}(i) If $A$ is commutative (so $\xi(\A) = \A$), then, for each $i\in I$, one can take $1$ as the basis of $V_i=E_i=A_i$. In this case, the above constructions recover classical exterior powers and the `Rubin lattices' introduced in \cite{R} (cf. \cite[Rem. 4.18]{bses}).\

\noindent{}(ii) If $\Gamma$ is a finite group, then the choice of a representation $\Gamma\to{\rm GL}_{\chi(1)}(\QQ^c)$ for each irreducible complex character $\chi$ of $\Gamma$ fixes a canonical set of modules $\{V_i\}_{i \in I}$ for the algebra $A = E[\Gamma]$ together with a distinguished basis of each (cf. \cite[Rem. 4.9]{bses}).}\end{remark} 



\subsection{}\label{332} 
We write $D(\A)$ for the derived category of $\A$-modules and $D^{{\rm lf}}(\A)$ for the full triangulated subcategory generated by bounded complexes of finitely generated locally-free $\A$-modules. (In the case $\A=\ZZ[G]$, a theorem of Swan \cite{swan} (see also \cite[Prop. (8.19), Vol. I]{curtisr}) implies $D^{{\rm lf}}(\ZZ[G])$ coincides with the full subcategory of $D(\ZZ[G])$ comprising complexes that are perfect). We write $D^{\rm lf}(\A)_{\rm is}$ for the subcategory of $D^{\rm lf}(\A)$ in which morphisms are restricted to isomorphisms, and $\calP(\xi(\A))$ for the category of graded invertible $\xi(\A)$-modules (though, often, we omit reference to the grading). Then, for any choice of normalization of reduced exterior powers as above, \cite[Th. 5.2]{bses} constructs a `reduced determinant functor' 
$$\D_{\A}:D^{\rm lf}(\A)_{\rm is}\to\calP(\xi(\A))$$ that respects exact triangles in $D^{\rm lf}(\A)$ and, on every locally-free $\A$-module $M$, gives
\[ \D_{\A}( M[0]) = \bigl( {{{{\bigcap}}}}_{\A}^{{\rm rk}_\A(M)}M, {\rm rr}_A(M_F)\bigr)\]
where ${\rm rr}_A(M_F)$ is the `reduced $A$-rank' of $A\otimes_{\A}M$. Writing ${\rm Mod}(A)$ for the category of finitely generated $A$-modules, the methods of loc. cit. also construct associated functors

$$\D_A:D^{\rm lf}(A)_{\rm is}\to\calP(\zeta(A))\quad\text{and}\quad \D^\diamond_A:{\rm Mod}(A)\to\calP(\zeta(A)).$$

These functors are related via canonical `passage to cohomology' isomorphisms
\begin{equation}\label{ptc iso} \D_{A}(X) \cong \underset{i\in\bz}{\bigotimes}
\, \D_{A}^\diamond(H^i(X))^{(-1)^{i}}\end{equation}
in $\mathcal{P}(\zeta(A))$, and are also such that the diagram of scalar extension functors 
\begin{equation}\label{eofs}
\xymatrix{ D^{\rm lf}(A)_{\rm is}  \ar@{->}[r]^{\D_{A}}   &  \calP(\zeta(A))  \\
D^{\rm lf}(\A)_{\rm is}  \ar@{->}[u] \ar@{->}[r]^{\D_{\A}} &  \calP(\xi(\A))  \ar@{->}[u]} 
\end{equation}
commutes (cf. \cite[Prop. 5.17]{bses}).

\section{Orders and bimodules}\label{ords and bims} 
In this section we introduce the orders and bimodules that are relevant to our theory. To do this we fix data $\Lambda, \calF, \A$ and $A$ as in \S\ref{LambdaFAA}.



\subsection{Locally Gorenstein orders}\label{conditions sub}  

\begin{definition}\label{LG}{\em We say $\A$ is `locally-Gorenstein' if there exists a $\Lambda$-linear anti-involution $\iota_\mathcal{A}$ of $\mathcal{A}$ such that for all $\frp\in {\rm Spec}(\Lambda)$, the dual $\Hom_{\Lambda}(\mathcal{A},\Lambda)_{(\mathfrak{p})}$ is a free rank one $\mathcal{A}_{(\mathfrak{p})}$-module with respect to the action $(a\theta)(a') := \theta(\iota_\mathcal{A}(a)a')$, for $\theta$ in $\Hom_{\Lambda}(\mathcal{A},\Lambda)_{(\mathfrak{p})}$ and $a,a'$ in $\A_{(\frp)}$.
}\end{definition}

\begin{example}\label{standard exam}{\em The above condition is satisfied in each of the following cases. \

(i) $\mathcal{A}$ is commutative and for every prime $\frp\in {\rm Spec}(\Lambda)$, the $\Lambda_{(\frp)}$-order $\A_{(\frp)}$ is Gorenstein (relative to the identity anti-involution). (See \cite{jensenthorrup} for a useful survey of Gorenstein orders.) 

(ii) $\mathcal{A} = \Lambda[G]$ for a finite group $G$ and $\iota_{\Lambda[G]}$ is the $\Lambda$-linear anti-involution $\iota_\#$ that inverts elements of $G$ (cf. \cite[(10.29) Cor.]{curtisr}). 

(iii) In the setting of (ii), fix a cyclic subgroup $C$ of $G$ and an idempotent $e$ of $\calF[C]$ with the property that 
$\iota_\#(e) = e$. Then the commutative $\Lambda$-order $\Lambda[C]e$ is monogenic, and hence also Gorenstein (cf. \cite{jensenthorrup}), and so the order $\A=\Lambda[G]c = \Lambda[G]\otimes_{\Lambda[C]}\Lambda[C]e$ is locally-Gorenstein with respect to the anti-involution $\iota_\#$. 

(iv) $\A$ is an hereditary order that possesses a $\Lambda$-linear anti-involution (this follows as a consequence, for example, of \cite[(4.3) Prop.]{curtisr}).}
\end{example}


For later use we record several useful consequences of the condition in Definition \ref{LG}.

\begin{lemma}\label{useful props remark} If $\A$ is locally-Gorenstein, then all of the following claims are valid. 

\begin{itemize}
\item[(i)] The functor $M \mapsto {\rm Hom}_\mathcal{A}(M,\mathcal{A})$ is exact on the category of finitely generated $\Lambda$-torsion-free $\mathcal{A}$-modules.
\item[(ii)] For every finitely generated $\A$-module $M$ the group ${\rm Ext}^i_{\mathcal{A}}(M,\mathcal{A})$ vanishes 
 for all integers $i$ with $i\ge 2$.
\item[(iii)] The functor $M \mapsto \Hom_{\mathcal{A}}(M,A/\mathcal{A})$ is exact on the category of finite $\mathcal{A}$-modules.
\item[(iv)] There is a natural identification ${\rm Ext}^1_\mathcal{A}(M,\mathcal{A})\cong \Hom_{\mathcal{A}}(M_{\rm tor},A/\mathcal{A})$. 
\end{itemize}
\end{lemma}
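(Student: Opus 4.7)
The plan is to reduce all four claims to local statements at primes $\frp \in \Spec(\Lambda)$, and at each such $\frp$ to exploit the defining isomorphism $\Hom_\Lambda(\A,\Lambda)_{(\frp)} \cong \A_{(\frp)}$ from Definition \ref{LG} to identify $\Ext^\ast_{\A_{(\frp)}}(-,\A_{(\frp)})$ with $\Ext^\ast_{\Lambda_{(\frp)}}(-,\Lambda_{(\frp)})$. Since each $\Lambda_{(\frp)}$ is a DVR, these latter $\Ext$-groups enjoy strong vanishing properties that yield (i) and (ii) directly. Claims (iv) and (iii) are then deduced by feeding this vanishing into the long exact sequence obtained by applying $\Hom_\A(M,-)$ to the short exact sequence $0 \to \A \to A \to A/\A \to 0$.

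To implement (i) and (ii), write $\A' := \A_{(\frp)}$, $\Lambda' := \Lambda_{(\frp)}$ and $(\A')^{\vee} := \Hom_{\Lambda'}(\A',\Lambda')$, viewed as a left $\A'$-module as in Definition \ref{LG} (after the $\iota_\A$-twist that converts the natural right-dual action into the left-dual action used in the tensor-Hom adjunction below). Let $M$ be a finitely generated $\A'$-module and fix a resolution $P_\bullet\to M$ by finitely generated projective $\A'$-modules. Each $P_i$ is finitely generated and torsion-free over $\Lambda'$, hence $\Lambda'$-free, so $P_\bullet$ simultaneously furnishes a $\Lambda'$-projective resolution of $M$. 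Combining the $\A'$-isomorphism $\A'\cong(\A')^\vee$ with the standard adjunction
\[\Hom_{\A'}(P_i,(\A')^\vee)\,=\,\Hom_{\Lambda'}(\A'\otimes_{\A'}P_i,\Lambda')\,=\,\Hom_{\Lambda'}(P_i,\Lambda')\]
and passing to cohomology yields $\Ext^i_{\A'}(M,\A')\cong\Ext^i_{\Lambda'}(M,\Lambda')$ for all $i\geq 0$. These vanish for $i\geq 2$ (as $\Lambda'$ has global dimension one), giving (ii); they also vanish for $i\geq 1$ whenever $M$ is $\Lambda$-torsion-free (as then $M_{(\frp)}$ is $\Lambda'$-free), which by the standard $\Ext^1$-criterion is equivalent to the exactness in (i).

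For (iv), apply $\Hom_\A(M,-)$ to $0\to\A\to A\to A/\A\to 0$. If $M$ is finite then $\Hom_\A(M,A)=0$ (since $A$ is $\Lambda$-torsion-free and $M$ is torsion), and $\Ext^1_\A(M,A)=\calF\otimes_\Lambda\Ext^1_\A(M,\A)=0$ (the first equality because $A=\calF\otimes_\Lambda\A$ is a flat central localization of $\A$ and $M$ is finitely presented; the second because $\Ext^1_\A(M,\A)$ is annihilated by the nonzero $\Lambda$-annihilator of $M$). Hence $\Hom_\A(M,A/\A)\cong\Ext^1_\A(M,\A)$ for finite $M$. For general $M$, the exact sequence $0\to M_{\rm tor}\to M\to M_{\rm tf}\to 0$ together with $\Ext^1_\A(M_{\rm tf},\A)=0$ (from (i)) gives $\Ext^1_\A(M,\A)\cong\Ext^1_\A(M_{\rm tor},\A)\cong\Hom_\A(M_{\rm tor},A/\A)$, which is (iv). For (iii), the same long exact sequence shows $\Ext^1_\A(M,A/\A)$ is sandwiched between $\Ext^1_\A(M,A)=0$ and $\Ext^2_\A(M,\A)=0$ (the latter by (ii)), and hence vanishes for finite $M$; combined with the automatic left-exactness of $\Hom_\A(-,A/\A)$ this yields (iii).

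The main technical point to watch will be matching the left/right $\A$-bimodule structures on $\Hom_\Lambda(\A,\Lambda)$ so that the hypothesis of Definition \ref{LG} is exactly what feeds the tensor-Hom adjunction in the local identification $\Ext^\ast_{\A'}(M,\A')\cong\Ext^\ast_{\Lambda'}(M,\Lambda')$. Once that point is settled, the remaining steps are routine bookkeeping with long exact sequences, flatness of central localizations, and the global dimension of a DVR.
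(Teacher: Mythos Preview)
Your argument is correct, and for claims (iii) and (iv) it follows essentially the same route as the paper. For claims (i) and (ii), however, you take a genuinely different path. The paper proves (i) by a direct splitting argument: given a short exact sequence $0\to\A\to M\to X_{\rm tf}\to 0$, it applies $\Hom_\Lambda(-,\Lambda)$, observes that the dualized sequence splits because the locally-Gorenstein hypothesis makes $\Hom_\Lambda(\A,\Lambda)$ a projective $\A$-module, and then double-dualizes back. Claim (ii) is then obtained from (i) by dimension-shifting. By contrast, you localize and use the adjunction isomorphism $\Hom_{\A'}(P_i,(\A')^\vee)\cong\Hom_{\Lambda'}(P_i,\Lambda')$ to identify $\Ext^i_{\A'}(M,\A')$ with $\Ext^i_{\Lambda'}(M,\Lambda')$ in all degrees at once, so that (i) and (ii) both drop out of the fact that $\Lambda'$ is a DVR. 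Your route is more conceptual and economical; the paper's route is more elementary in that it never needs to verify that the $\iota_\A$-twisted left action of Definition~\ref{LG} matches (up to the isomorphism $\theta\mapsto\theta\circ\iota_\A$) the left action $(a\theta)(x)=\theta(xa)$ required by the adjunction. You flag this compatibility as the main technical point, and it does go through, but it would be worth writing out that one-line check explicitly rather than leaving it as a parenthetical.
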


\begin{proof} For each $\A$-module $M$ we set $M^\ast := {\rm Hom}_\Lambda(M,\Lambda)$ and we note that the natural map $M_{\rm tf} \to (M_{\rm tf})^{\ast \ast} = M^{\ast\ast}$ is bijective.  

Claim (i) is equivalent to asserting that ${\rm Ext}^1_\A(X_{\rm tf},\A)$ vanishes for every finitely generated $\A$-module $X$. To prove this we need to show that, for any such $X$, any short exact sequence of $\A$-modules of the form $0 \to \A \to M \to X_{\rm tf} \to 0$ is split. To check this, we note that the $\Lambda$-module $X_{\rm tf}$ is projective and hence that  the functor $M \mapsto M^*$ applies to the given exact sequence to give an exact sequence $0 \to X^* \to M^* \to \A^* \to 0$. We then note that the latter sequence splits (since Definition \ref{LG} implies the $\A$-module $\A^*$ is projective) and so gives an isomorphism $M^*\cong X^* \oplus \A^*$. Applying the functor $M\mapsto M^*$ to this isomorphism, and noting $X_{\rm tf}$ and $M$ are $\Lambda$-torsion-free, we therefore obtain an isomorphism of $\A$-modules 
\[ M\cong M^{\ast\ast} \cong (X^\ast \oplus \A^\ast)^\ast \cong X^{\ast\ast} \oplus \A^{\ast\ast} \cong X_{\rm tf}\oplus \A\]
which can be seen to imply that the original sequence is split, as required. 

To prove claim (ii) we fix an exact sequence of $\A$-modules $0 \to X \to P \to M \to 0$ with $P$ finitely generated and projective. Then, for each integer $i \ge 2$, the long exact ${\rm Ext}_\A^{\bullet}(-,\A)$-sequence associated to this sequence induces an 
isomorphism of abelian groups ${\rm Ext}^{i-1}_\A(X,\A) \cong {\rm Ext}^i_\A(M,\A)$. This reduces us proving ${\rm Ext}_\A^1(X,\A)$ vanishes for a finitely generated $\Lambda$-torsion-free $\A$-module $X$ and this follows from the proof of claim (i). 

To prove claim (iii) it suffices to show that ${\rm Ext}_\A^1(M,A/\mathcal{A})$ vanishes for every finite $\A$-module $M$. This is true since the first and third modules in the natural exact sequence 
\[ {\rm Ext}^1_\A(M,A) \to {\rm Ext}_\A^1(M,A/\mathcal{A})\to {\rm Ext}_\A^2(M,\mathcal{A})\]
vanish, the first as it is $\Lambda$-torsion $\mathcal{F}$-module and the third by claim (ii). 

Finally, to prove claim (iv) we use the exact sequence 
\[ {\rm Ext}_\A^1(M_{\rm tf},\mathcal{A})\to {\rm Ext}_\A^1(M,\mathcal{A}) \to {\rm Ext}_\A^1(M_{\rm tor},\mathcal{A}) \to {\rm Ext}_\A^2(M_{\rm tf},\mathcal{A})\]
that is induced by the tautological exact sequence $0 \to M_{\rm tor} \to M \to M_{\rm tf} \to 0$. We note that the first and last modules in the displayed vanish, respectively by claims (i) and (ii). The isomorphism in claim (iv) then follows from the fact that the exact sequence $0 \to \A \to A \to A/\A \to 0$ induces an isomorphism 
${\rm Ext}_\A^1(M_{\rm tor},\mathcal{A})\cong \Hom_\A(M_{\rm tor},A/\A)$. \end{proof} 

\subsection{Bimodules and functors}
%
\subsubsection{}
\label{bimodule} In the sequel we also assume to be given a finite group $G$ and a finitely generated $(\mathcal{A},\Lambda[G])$-bimodule $\Pi$ that satisfies the following hypothesis. 
\begin{hypothesis}\label{Pi hyp}\ {\em 
\begin{itemize}
\item[($\Pi_1$)] $\Pi$ is a locally-free $\mathcal{A}$-module.
 \item[($\Pi_2$)] $W\mapsto W\otimes_\mathcal{A}\Pi$ induces an injection from the set of isomorphism classes of simple right $A_\CC$-modules to the set of isomorphism classes of simple right $\CC[G]$-modules.
 \item[($\Pi_3$)] If $\A$ is locally-Gorenstein, then ($\Pi_2$) is also true with $\Pi$ replaced by $\check\Pi:= \Hom_{\mathcal{A}}(\Pi,\mathcal{A})$, regarded as an $(\mathcal{A},\Lambda[G])$-bimodule by means of the anti-involutions $\iota_\mathcal{A}$ and $\iota_\#$.
\end{itemize} }
\end{hypothesis}

In the sequel we write $\widehat G$ for the set of irreducible complex characters of $G$. 
\begin{remark}\label{wedd bij}{\em Under condition ($\Pi_2$) one obtains a bijection $\Pi_*$ from the set of Wedderburn components ${\rm Wed}_A$ of $A_\CC$ to a subset $\Upsilon_\Pi$ of $\widehat G$ and hence a commutative diagram 
\begin{equation}\label{key commute}\begin{CD}
K_1(\QQ[G]) @> {\rm Nrd}_{\QQ[G]}  >> \zeta(\QQ[G])^\times @> \subset  >> \zeta(\CC[G])^\times = {\prod}_{\widehat G}\CC^\times\\
@V \mu^1_\Pi VV @. @VV \iota_\Pi V\\
 K_1(A) @> {\rm Nrd}_A >> \zeta(A)^\times @>\subset  >> \zeta(A_\CC)^\times = {\prod}_{{\rm Wed}_A}\CC^\times.
\end{CD}\end{equation}
Here $K_1(A)$ is the $K_1$-group of the category of finitely generated $A$-modules, $\mu^1_\Pi$ sends the class of an automorphism $\alpha$ of a finitely generated left $\QQ[G]$-module $V$ to the class of the automorphism ${\rm id}_\Pi\otimes_{\ZZ[G]}\alpha$ of $\Pi\otimes_{\ZZ[G]}V$ and $\iota_\Pi$ sends each element $(z_\chi)_{\chi\in \widehat G}$ to $(z_{\Pi_*(C)})_{C\in {\rm Wed}_A}$.}\end{remark}

\begin{example}\label{exam1}{\em We consider applications in which the following bimodules $\Pi$ arise.\

(i) If $\Lambda\subset\QQ$ and $A$ is a direct factor of $\QQ[G]$ then for any homomorphism of rings $\kappa:\Lambda[G] \to \mathcal{A}$  one can set $\Pi_\kappa := \mathcal{A}$. In all cases the lattice $\Pi_\kappa$ satisfies the conditions $(\Pi_1)$ and $(\Pi_2)$, and $\Upsilon_{\Pi_\kappa}$ is the subset of $\widehat{G}$ comprising characters which occur in $A_\CC$. The order $\mathcal{A}$ is locally-Gorenstein under any of the additional conditions discussed in Example \ref{standard exam} and also if $\mathcal{A}$ is a regular $\Lambda$-algebra of dimension one, and in all such cases $\Pi_\kappa$ satisfies $(\Pi_3)$.


(ii) Fix a representation $\psi: G \to {\rm GL}_{\psi(1)}(\mathcal{O}_\psi)$ of $G$ over a finite extension $\mathcal{O}_\psi$ of $\La$. 
Set $\mathcal{A}_\psi := \mathcal{O}_\psi$ and regard $\Pi_\psi := \mathcal{O}_\psi^{\psi(1)}$ as an $(\mathcal{O}_\psi,\Lambda[G])$-bimodule via $\psi$. Then $\mathcal{A}_\psi $ is locally-Gorenstein, $\Pi_\psi$ satisfies Hypothesis \ref{Pi hyp} and $\Upsilon_{\Pi_\psi} = \{\psi\}$.
}\end{example}

\subsubsection{}
The bimodule $\Pi$ gives rise to functors $M \mapsto {^\Pi}M$ and $M\mapsto {_\Pi}M$ from the category of left $G$-modules to the category of left $\mathcal{A}$-modules by setting
\[ {^\Pi}M := H^0(G,\Pi\otimes_\ZZ M)\,\,\,\text{ and }\,\,\,{_\Pi}M:= H_0(G,\Pi\otimes_\ZZ M) = \Pi\otimes_{\ZZ[G]}M,\]
where the left action of $G$ on the tensor product is via $g(\pi\otimes m) = (\pi)g^{-1}\otimes g(m)$. 

We write $T_{\Pi,M}: {_\Pi}M \to {^\Pi}M$ for the homomorphism of $\mathcal{A}$-modules that is induced by sending each element $\pi\otimes m$ of $\Pi\otimes_\ZZ M$ to ${\sum}_{g \in G}g(\pi\otimes m)$.

\begin{remark}\label{ct remark}{\em If the $G$-module $\Pi\otimes_\ZZ M$ is cohomologically-trivial, then the Tate cohomology groups $\hat H^{-1}(G,\Pi\otimes_\ZZ M) = \ker(T_{\Pi,M})$ and $\hat H^{0}(G,\Pi\otimes_\ZZ M) = \cok(T_{\Pi,M})$ vanish and so the map $T_{\Pi,M}$ is bijective.}\end{remark}

\begin{lemma}\label{trace sequences} Let $\phi:P^0\to P$ be a map between cohomologically-trivial $G$-modules. Then $T_{\Pi,P^0}$ is bijective and there exists a canonical exact sequence of $\A$-modules
$$0\to{^\Pi}\ker(\phi)\stackrel{\iota}{\longrightarrow}{_\Pi}P^0\stackrel{{_\Pi}\phi}{\longrightarrow}{_\Pi}P\stackrel{\varpi}{\longrightarrow}{_\Pi}\cok(\phi)\to 0$$
in which $\iota$ is the composite ${^\Pi}\ker(\phi)\to{^\Pi}P^0 \xrightarrow{T_{\Pi,P^0}^{-1}}{_\Pi}P$ and $\varpi$ is induced by the canonical projection map $P\to\cok(\phi)$.
\end{lemma}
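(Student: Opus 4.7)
The claim breaks into two parts: the bijectivity of $T_{\Pi,P^0}$, and the exactness of the displayed four-term sequence (whose defining composite for $\iota$ I read as landing in ${_\Pi}P^0$ rather than the printed ${_\Pi}P$).

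For the bijectivity, Remark \ref{ct remark} reduces me to showing that the $G$-module $\Pi\otimes_\ZZ P^0$ is cohomologically-trivial. Condition $(\Pi_1)$ forces $\Pi$ to be projective over $\Lambda$ and hence $\ZZ$-flat. Since $P^0$ has a finite resolution by projective $\ZZ[G]$-modules, tensoring over $\ZZ$ with $\Pi$ preserves exactness, and the standard $G$-equivariant isomorphism $\pi\otimes h\mapsto \pi h\otimes h$ identifies $\Pi\otimes_\ZZ\ZZ[G]$ with diagonal $G$-action with $\Pi\otimes_\ZZ\ZZ[G]$ carrying the action on the right factor only; the latter is induced from the trivial subgroup and so is cohomologically-trivial for any $\ZZ$-flat $\Pi$. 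Consequently every term of the tensored resolution is cohomologically-trivial, and hence so is $\Pi\otimes_\ZZ P^0$; the same argument applies to $\Pi\otimes_\ZZ P$.

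For the exactness of the displayed sequence, $\ZZ$-flatness of $\Pi$ keeps the tautological factorisation
\begin{equation*}
0\to\ker(\phi)\to P^0\to\im(\phi)\to 0,\qquad 0\to\im(\phi)\to P\to\cok(\phi)\to 0
\end{equation*}
exact after applying $\Pi\otimes_\ZZ-$. Taking $H_0(G,-)$ of these and using the vanishing of $H_1(G,\Pi\otimes_\ZZ P)$ from the first step gives, by routine diagram chasing, surjectivity of $\varpi$ and the equality $\im({_\Pi}\phi)=\ker(\varpi)$, which settles exactness at ${_\Pi}\cok(\phi)$ and at ${_\Pi}P$.

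The subtle point, and the main obstacle, is exactness at ${_\Pi}P^0$: the naive candidate ${_\Pi}\ker(\phi)\to{_\Pi}P^0$ is in general not injective, and the definition of $\iota$ is designed precisely to circumvent this. Here I would use the naturality identity $T_{\Pi,P}\circ{_\Pi}\phi={^\Pi}\phi\circ T_{\Pi,P^0}$: combined with the bijectivity of $T_{\Pi,P^0}$ and $T_{\Pi,P}$ established above, it yields
$$\ker({_\Pi}\phi)=T_{\Pi,P^0}^{-1}\bigl(\ker({^\Pi}\phi)\bigr).$$
Flatness of $\Pi$ together with left-exactness of $H^0(G,-)$ identify $\ker({^\Pi}\phi)$ with ${^\Pi}\ker(\phi)\subseteq{^\Pi}P^0$, so this kernel is exactly the image of $\iota$, which is manifestly injective as a composition of two injective maps. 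This completes the proof.
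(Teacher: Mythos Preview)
Your proof is correct and takes essentially the same approach as the paper: both rely on the bijectivity of $T_{\Pi,P^0}$ and $T_{\Pi,P}$ (via cohomological triviality of $\Pi\otimes_\ZZ P^0$ and $\Pi\otimes_\ZZ P$) together with left-exactness of ${^\Pi}(-)$ and right-exactness of ${_\Pi}(-)$, which the paper simply packages into a single commutative diagram and a one-line appeal to $\Pi$ being $\Lambda$-torsion-free. Your more explicit justification of cohomological triviality and your observation that the target of $T_{\Pi,P^0}^{-1}$ should read ${_\Pi}P^0$ are both correct.
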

\begin{proof} If $N$ is a cohomologically-trivial $G$-module, then, since $\Pi$ is $\Lambda$-torsion-free, the $G$-module $\Pi\otimes_\ZZ N$ is also cohomologically-trivial. The first claim therefore follows directly from Remark \ref{ct remark}. Next, we note that the functors $M \to {^{\Pi}}M$ and $M \to {_\Pi}M$ are respectively left and right exact, and hence that there is an exact commutative diagram of $\mathcal{A}$-modules

\begin{equation*}
\begin{CD}
0 @> >>  ^{\Pi}\ker(\phi) @>  >>  ^{\Pi}\! P^0 @> {^\Pi} \phi >> ^{\Pi}\! P \\
@. @. @A T_{\Pi, P^0} AA @AA T_{\Pi, P} A \\
@. @. _\Pi  P^0 @> {_\Pi}\phi >> _\Pi  P @> \varpi >> _\Pi \cok(\phi) @> >> 0
\end{CD}\end{equation*}
in which the vertical maps are bijective by Remark \ref{ct remark}. The claimed exact sequence follows directly from this diagram. 
\end{proof}

\section{Weil-Stark elements}\label{We}

In this section we present an unconditional construction of a canonical family of elements in the reduced Rubin lattices of unit groups. 
 
 To do so we fix a finite Galois extension $L/K$ of global fields and set
$G:= \Gal(L/K)$. We write $S^\infty_K$ for the set of archimedean places of $K$ (so $S_K^\infty=\emptyset$ unless $K$ is a number field) and fix a finite non-empty set of places $S$ of $K$ that contains $S^\infty_K$. For a set of places $\Sigma$ of $K$ 
 we write $\Sigma_L$ for the
set of places of $L$ lying above $\Sigma$, $Y_{L,\Sigma}$ for the free abelian group on $\Sigma_L$ and $X_{L,\Sigma}$ for the submodule of $Y_{L,\Sigma}$ comprising elements whose coefficients sum to zero. If $S^\infty_K \subseteq \Sigma$, we write $\co_{L,\Sigma}$ for the subring of $L$ comprising
elements integral at all places outside $\Sigma_L$ and $\co_{L,\Sigma}^\times$ for its unit group. The groups $X_{L,\Sigma}, Y_{L,\Sigma}, \co_{L,\Sigma}$ and $\co_{L,\Sigma}^\times$ are all stable under the natural action of $G$. 

We assume throughout that the constructions reviewed in \S\ref{Rubin section} and \S\ref{332} are, in all relevant cases, normalised as in Remark  \ref{normalization rem}. 


\subsection{The general construction}\label{construction section}

\subsubsection{}To help set the context of our construction we establish some useful facts concerning the reduced Rubin lattices of unit groups. 
%
%
%
%

For $\psi$ in $\widehat{G}$ we fix an associated complex representation $V_\psi$ of $G$ and set 
\[ r_S(\psi):= \begin{cases} {\sum}_{v \in S}{\rm dim}_\CC(H^0(G_v,V_\psi)), &\text{if $\psi \not= {\bf 1}$},\\
|S|-1, &\text{if $\psi = {\bf 1}$,}\end{cases}\]
where $G_v$ is the decomposition subgroup in $G$ of an arbitrary place of $L$ above $v$. We also define a primitive idempotent of $\zeta(\CC[G])$ by setting $e_\psi:=(\psi(1)/|G|){\sum}_{g\in G}\psi(g)g^{-1}$.

\begin{lemma}\label{fe} Fix a bimodule $\Pi$ that satisfies Hypothesis \ref{Pi hyp} and set $d := {\rm rk}_\mathcal{A}(\Pi)$. Then the following claims are valid for every non-negative integer $a$. 
\begin{itemize}
\item[(i)] For each $\psi$ in $\widehat{G}$ set $r(\psi) := r_S(\psi)$ and $a(\psi) := \psi(1)a$. Then one has 
\[ {\rm dim}_\CC\bigl(e_\psi\bigl( \CC\cdot {\bigcap}_{\ZZ[G]}^a\mathcal{O}_{L,S}^\times\bigr)\bigr) = 
\begin{cases} 0, &\text{if $a(\psi) > r(\psi)$},\\
\binom{r(\psi)}{a(\psi)}, &\text{if $a(\psi) \le r(\psi)$}.
\end{cases}
\]
\item[(ii)] There is an idempotent $e$ of $\zeta(\mathcal{F}[G])$ such that $\calF\cdot{{\bigcap}}_{\A}^{ad}(^{\Pi}\co^\times_{L,S}) \subseteq e\cdot\bigl(\calF\cdot{{{\bigcap}}}_{\ZZ[G]}^a\co^\times_{L,S}\bigr)$, with equality if the cokernel of the following restriction map is finite: 
\[ \Hom_G(\mathcal{O}^\times_{L,S},\ZZ[G]) \xrightarrow{\theta \mapsto 1\otimes \theta} \Hom_\A({^{\Pi}\mathcal{O}^\times_{L,S}},{{^\Pi}\ZZ[G]}).\]
\end{itemize}
\end{lemma}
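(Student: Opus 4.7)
The plan is to prove both parts by scalar-extending to $\CC$, decomposing along Wedderburn components of $\CC[G]$ and $A_\CC$, and applying the general dimension formula
\[ \dim_\CC {\bigwedge}^r_{M_d(\CC)}(V^n) = \binom{n}{rd}, \]
valid when $V$ is a simple $M_d(\CC)$-module of $\CC$-dimension $d$; this formula is a direct consequence of the construction of reduced exterior powers reviewed in \S\ref{Rubin section}.

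For (i), Dirichlet's regulator gives a $\CC[G]$-isomorphism $\CC \otimes_\ZZ \co^\times_{L,S} \cong \CC \otimes_\ZZ X_{L,S}$, and since reduced exterior powers are compatible with scalar extension, the dimension in question equals $\dim_\CC {\bigwedge}^a_{M_{\psi(1)}(\CC)}(e_\psi \CC X_{L,S})$. Using the decomposition $Y_{L,S} = \bigoplus_{v \in S} \ind^G_{G_v} \ZZ$, Frobenius reciprocity, and the exact sequence $0 \to X_{L,S} \to Y_{L,S} \to \ZZ \to 0$, a standard character computation shows $e_\psi \CC X_{L,S} \cong V_\psi^{r(\psi)}$ as a $\CC[G]$-module; substituting $n = r(\psi)$, $d = \psi(1)$, $r = a$ in the general formula then yields the assertion.

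For (ii), the analysis of the bimodule $\Pi$ is central. Set $d := {\rm rk}_\A \Pi$, and for each $\psi \in \Upsilon_\Pi$ let $C := \Pi_*^{-1}(\psi) \subseteq A_\CC$ be the corresponding Wedderburn component, of $\CC$-degree $d_C$ (so the simple left $C$-module $W_C$ has $\dim_\CC W_C = d_C$). Then $(\Pi_1)$ gives that $e_C \CC \Pi$ is free of rank $d$ as a left $C$-module, while $(\Pi_2)$ combined with Schur's lemma forces the isomorphism of left $C$-modules ${e_C \CC \Pi} \otimes_{\CC[G]} V_\psi \cong W_C$; comparing $\CC$-dimensions on both sides yields the key identity
\[ d \cdot d_C = \psi(1). \]
Since $|G|$ is invertible in $\calF$ one has $\calF \cdot {^{\Pi}}\co^\times_{L,S} \cong \calF \Pi \otimes_{\calF[G]} \calF \co^\times_{L,S}$; base-changing to $\CC$ and extracting the $C$-component yields $e_C \CC \cdot {^{\Pi}}\co^\times_{L,S} \cong W_C^{r(\psi)}$. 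Substituting $n = r(\psi)$, $d = d_C$ and $r = ad$ in the general formula, and using $d_C \cdot ad = \psi(1) a$, then gives
\[ \dim_\CC e_C \CC \cdot {\bigcap}^{ad}_\A ({^{\Pi}}\co^\times_{L,S}) = \binom{r(\psi)}{\psi(1) a}, \]
which matches the $\psi$-component dimension from part (i). For $\psi \notin \Upsilon_\Pi$ the corresponding $C$-piece of $\CC \cdot {^{\Pi}}\co^\times_{L,S}$ vanishes by $(\Pi_2)$.

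Setting $e := \sum_{\psi \in \Upsilon_\Pi} e_\psi$, which lies in $\zeta(\calF[G])$ by Galois-stability of $\Upsilon_\Pi$, the isomorphism $\iota_\Pi$ of Remark \ref{wedd bij} provides the natural identification under which the claimed inclusion should be interpreted. The inclusion itself then follows from the defining duality conditions of the two reduced Rubin lattices: every $\varphi \in \Hom_\A({^{\Pi}}\co^\times_{L,S}, \A)$ used to test $\xi(\A)$-integrality on the left-hand side arises (up to torsion) from composing the restriction map of the statement with a dual of $\Pi$, so the integrality conditions defining the left-hand side are at least as restrictive as those defining the right-hand side. When this restriction map has finite cokernel, the two sets of conditions coincide rationally and the inclusion becomes an equality. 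The main obstacle will be constructing the natural identification compatibly with both reduced Rubin lattice structures, and tracking the transport of duality constraints precisely enough to obtain the inclusion in general and equality under the finite-cokernel hypothesis.
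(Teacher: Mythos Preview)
Your treatment of part (i) and the dimension analysis in part (ii) matches the paper's approach closely: both reduce to $X_{L,S}$ via the regulator, both invoke the description of reduced exterior powers after scalar extension (the paper cites \cite[Th.~4.17(ii)]{bses} where you cite the dimension formula), and both extract the key numerical identity $d\cdot d_C = \psi(1)$ from $(\Pi_2)$ in the same way.

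The gap is in your inclusion argument for (ii). You write that ``every $\varphi \in \Hom_\A({^{\Pi}}\co^\times_{L,S}, \A)$ \dots\ arises (up to torsion) from composing the restriction map \dots\ with a dual of $\Pi$'', and conclude from this that the left-hand side carries at least as many integrality constraints. But the premise is exactly the finite-cokernel hypothesis, not something that holds in general; and even if it held, it would give the \emph{opposite} containment (fewer independent $\varphi$-tests means a larger Rubin lattice on the left). The correct direction is the reverse: every $\theta \in \Hom_G(\co^\times_{L,S},\ZZ[G])$, after applying the restriction map and decomposing ${^\Pi}\ZZ[G]\otimes\calF \cong A^d$, yields a $d$-tuple of $\varphi$'s in $\Hom_\A({^\Pi}\co^\times_{L,S},\A)$. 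Thus the $\theta$-conditions defining the right-hand side are (via $\varrho$) a \emph{subset} of the $\varphi$-conditions defining the left-hand side, which is what forces $\calF\cdot\bigcap_\A^{ad} \subseteq e\cdot\calF\cdot\bigcap_{\ZZ[G]}^a$. Under the finite-cokernel hypothesis the image of $\varrho$ spans all of $\Hom_A(\calF\cdot {^\Pi}\co^\times_{L,S},A)$, and only then do the two families of conditions coincide, giving equality.

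You also flag ``constructing the natural identification compatibly with both reduced Rubin lattice structures'' as the main obstacle but do not carry it out. The paper handles this concretely: using \cite[Th.~4.17(ii)]{bses} it writes each component $e_{\psi_W}(\CC\cdot\bigcap_{\ZZ[G]}^a X_{L,S})$ and $e_W(\CC\cdot\bigcap_\A^{ad} Z_{L,S})$ as the \emph{same} $\CC$-exterior power, and then characterises $e(\calF\cdot\bigcap_{\ZZ[G]}^a X_{L,S})$ inside $\CC\cdot\bigcap_\A^{ad} Z_{L,S}$ as the subset cut out by the conditions $({\wedge}_{i}\varrho(\varphi_i))(x)\in\zeta(A)$. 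This explicit description is what makes the inclusion and the equality-under-surjectivity transparent; your appeal to $\iota_\Pi$ from Remark~\ref{wedd bij} (a map on centres, not on the lattices themselves) does not by itself supply it.
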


\begin{proof} At the outset we note the Dirichlet regulator map induces an isomorphism of $\RR[G]$-modules $R_{L,S}:\RR\cdot\mathcal{O}_{L,S}^\times \cong \RR\cdot X_{L,S}$. This combines with the Noether-Deuring Theorem to imply the existence of a finite index $G$-submodule of $\mathcal{O}_{L,S}^\times$ that is isomorphic to $X_{L,S}$, and so it is enough to prove the stated claims after replacing $\mathcal{O}_{L,S}^\times$ by $X_{L,S}$. To do this we often use the general result of \cite[Th. 4.17(ii)]{bses} regarding reduced Rubin lattices (though do not always mention it). For example, as a first application, for each $\psi$ in $\widehat{G}$, this result implies 
\begin{equation}\label{explicit span} e_{\psi}\bigl( \CC\cdot {\bigcap}_{\ZZ[G]}^aX_{L,S}\bigr) = {\bigwedge}_\CC^{\psi(1)a} \bigl(V_{\psi}^\ast\otimes_{\ZZ[G]}X_{L,S}\bigr),\end{equation}
with $V_\psi^* := \Hom_\CC(V_\psi,\CC)$ (regarded as a right $G$-module in the obvious way).

To deduce claim (i) from this description, it is enough to show ${\rm dim}_\CC\bigl(V_\psi^\ast\otimes_{\ZZ[G]}X_{L,S}\bigr) = r_S(\psi)$. This can be verified by an easy computation that combines the exact sequence $0 \to X_{L,S}\to Y_{L,S} \to \ZZ \to 0$ with the fact $Y_{L,S}$ is isomorphic as a $G$-module to the direct sum ${\bigoplus}_{v \in S}(\ZZ[G]\otimes_{\ZZ[G_v]}\ZZ)$ and that for each $v \in S$ there is an isomorphism of $\CC$-spaces 
\[ V_\psi^\ast\otimes_{\ZZ[G]}\bigl(\ZZ[G]\otimes_{\ZZ[G_v]}\ZZ\bigr) \cong V_\psi^\ast\otimes_{\ZZ[G_v]}\ZZ \cong H^0(G_v,V_\psi)^\ast.\]

To prove claim (ii) we set $Z_{L,S} := {^{\Pi}X_{L,S}}$ and fix an isomorphism of $A$-modules $\mathcal{F}\cdot \Pi \cong A^d$. Then, under condition $(\Pi_2)$, for a simple right $A_\CC$-module $W$ the module $W^{\oplus d} = W\otimes_A A^d \cong W\otimes_\A \Pi$ is a simple right $\CC[G]$-module and hence isomorphic to $V_{\psi_W}^*$ for some ${\psi_W}$ in $\widehat{G}$. It follows that $\psi_W(1) = {\rm dim}_\CC(W)d$ and hence that the description (\ref{explicit span}) combines with the  corresponding description of ${\bigcap}_{\A}^{ad}Z_{L,S}$ to give identifications     
\begin{multline*} e_{\psi_W}\bigl( \CC\cdot {\bigcap}_{\ZZ[G]}^aX_{L,S}\bigr) = {\bigwedge}_\CC^{\psi_W(1)a} \bigl(V_{\psi_W}^\ast\otimes_{\ZZ[G]}X_{L,S}\bigr) = {\bigwedge}_\CC^{\psi_W(1)a} \bigl( W\otimes_\A (\Pi\otimes_{\ZZ[G]}X_{L,S})\bigr)\\
 = {\bigwedge}_\CC^{{\rm dim}_\CC(W)ad} \bigl(W\otimes_{\mathcal{A}}Z_{L,S}\bigr) = e_W\bigl(\CC\cdot {\bigcap}_\mathcal{A}^{ad}Z_{L,S}\bigr),\end{multline*}
where $e_W$ is the primitive central idempotent of $A_\CC$ corresponding to $W$. Via these identifications (for each $W$) one has  $\CC\cdot {\bigcap}_\mathcal{A}^{ad}Z_{L,S} = e\bigl(\CC\cdot {\bigcap}_{\ZZ[G]}^aX_{L,S})$, where $e$ is the sum of $e_{\psi_W}$ 
as $W$ ranges over the simple right $A_\CC$-modules for which $e_W\bigl(\CC\cdot {\bigcap}_\mathcal{A}^{ad}Z_{L,S}\bigr)\not= (0)$. In particular, the idempotent $e$ belongs to $\zeta(\mathcal{F}[G])$ since the corresponding sum ${\sum}_W e_W$ belongs to $\zeta(A)$. 

In a similar fashion, the composite restriction map 
\begin{multline*} \varrho: \mathcal{F}\cdot \Hom_G(X_{L,S},\ZZ[G]) \to \mathcal{F}\cdot  \Hom_\A(Z_{L,S},{{^\Pi}\ZZ[G]}) = \Hom_A(\mathcal{F}\cdot Z_{L,S},\mathcal{F}\cdot \Pi)\\ \cong \Hom_A(\mathcal{F}\cdot Z_{L,S},A^d) = \mathcal{F}\cdot\Hom_\A(Z_{L,S},\A)^d\end{multline*}
induces for each $W$ a homomorphism of $\CC$-spaces 
\[  {\bigwedge}_\CC^{\psi_W(1)a} \bigl(V_{\psi_W}\otimes_{\ZZ[G]^{\rm op}}\Hom_G(X_{L,S},\ZZ[G])\bigr) \to {\bigwedge}_\CC^{{\rm dim}_\CC(W)ad} \bigl(W^\ast\otimes_{\mathcal{A}^{\rm op}}(\Hom_\A(Z_{L,S},\A))\bigr),\]
and hence 
 of reduced exterior powers 

\[ e\bigl({\bigwedge}_{\mathcal{F}[G]}^a(\mathcal{F}\cdot \Hom_G(X_{L,S},\ZZ[G]))\bigr) \to {\bigwedge}_{A}^{ad}(\mathcal{F}\cdot \Hom_\A(Z_{L,S},\A)).\] 

Under these identifications, $e\bigl(\calF\cdot{{{\bigcap}}}_{\ZZ[G]}^a X_{L,S}\bigr)$ is equal to the 
subset of $\CC\cdot {\bigcap}_\mathcal{A}^{ad}Z_{L,S}$ comprising elements $x$ with the property $({\wedge}_{i=1}^{i=a}\varrho(\varphi_i))(x) \in \zeta(A)$ for all $\varphi_i \in \mathcal{F}\cdot\Hom_G(X_{L,S},\ZZ[G])$, and each element 
${\wedge}_{i=1}^{i=a}\varrho(\varphi_i)$ belongs to ${\bigwedge}_{A}^{ad}(\mathcal{F}\cdot \Hom_\A(Z_{L,S},\A))$. 

The inclusion in claim (ii) follows directly from these facts (and \cite[Th. 4.17(ii)]{bses}), and, in addition, this inclusion is an equality under the stated condition since then the map $\varrho$ is surjective and hence the elements ${\wedge}_{i=1}^{i=a}\varrho(\varphi_i)$ span ${\bigwedge}_{A}^{ad}(\mathcal{F}\cdot \Hom_\A(Z_{L,S},\A))$. This completes the proof of claim (ii). 
%
%
%
\end{proof}

\subsubsection{}\label{dataasin} In order to state the main result of this section (the proof of which will then occupy the rest of \S\ref{construction section}), we assume  $S$ contains all places that ramify in $L/K$. 
We also fix data $\Lambda,A,\mathcal{A}$ as in \S \ref{ords and bims} and a bimodule $\Pi$ that satisfies $(\Pi_1)$ and $(\Pi_2)$ in Hypothesis \ref{Pi hyp} for $G={\rm Gal}(L/K)$. (We do not, however, assume in this section that $\A$ is locally-Gorenstein.)  

We now also fix a surjective homomorphism of $\mathcal{A}$-modules 
\begin{equation}\label{choice of pi}\pi: {_{\Pi}}X_{L,S}\to Y_\pi\end{equation}
in which $Y_\pi$ is locally-free. We then define an idempotent of $\zeta(A)$ by setting
$e_\pi:={\sum} e,$
where the sum runs over all primitive idempotents $e$ of $\zeta(A)$ for which $e(\mathcal{F}\cdot\ker(\pi))=(0)$. 

%




For the Whitehead order $\xi(\A)$ of $\A$ (cf. \S\ref{Whitehead section}), we say a $\xi(\A)e_\pi$-module $M$ is `locally-cyclic' if for all $\frp\in{\rm Spec}(\Lambda)$, the $\xi(\A_{(\frp)})$-module $M_{(\frp)}$ is isomorphic to a quotient of $\xi(\A_{(\frp)})e_\pi$.


\begin{theorem}\label{main result} Fix data $L/K, S, \Pi$ and $\pi$ as above and set $r := {\rm rk}_\A(Y_\pi)$. Then there exists a canonical, locally-cyclic, full $\xi(\A)e_\pi$-submodule $\W^{\Pi,\pi}_{L/K,S}$ of 
$e_\pi\bigl(\calF\cdot{{{\bigcap}}}_{\A}^r(^{\Pi}\co^\times_{L,S})\bigr)$.\end{theorem}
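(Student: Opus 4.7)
My plan is to construct $\W^{\Pi,\pi}_{L/K,S}$ as the image, under a canonical isomorphism arising from the reduced determinant functor $\D_\A$, of an invertible $\xi(\A)e_\pi$-module attached to a suitable perfect complex. The construction will be built in three stages: produce a Weil-\'etale style complex over $\ZZ[G]$, tensor with $\Pi$ and feed in $\pi$ via a distinguished triangle, then apply $\D_\A$ and use passage-to-cohomology to land in a reduced Rubin lattice.

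First I would invoke the unconditional variant of Lichtenbaum's Weil-\'etale cohomology alluded to in the introduction to obtain a canonical object $C_{L,S}^\bullet$ of $D^{\rm lf}(\ZZ[G])$ whose cohomology is concentrated in degrees zero and one, with $H^0(C_{L,S}^\bullet)\cong\co_{L,S}^\times$ and $H^1(C_{L,S}^\bullet)\cong X_{L,S}$. After a standard rank adjustment I would choose an explicit two-term representative $P^0\xrightarrow{\phi}P^1$ consisting of finitely generated cohomologically-trivial $\ZZ[G]$-modules. Applying the functor ${_\Pi}(-)$ termwise and invoking Lemma~\ref{trace sequences} then yields the locally-quadratic presentation
$$0\to{^\Pi}\co_{L,S}^\times\to{_\Pi}P^0\xrightarrow{{_\Pi}\phi}{_\Pi}P^1\xrightarrow{\varpi}{_\Pi}X_{L,S}\to 0$$
of $\A$-modules, with both ${_\Pi}P^i$ locally-free by condition $(\Pi_1)$.

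Next, the composite $\tilde\pi:=\pi\circ\varpi:{_\Pi}P^1\to Y_\pi$ induces a morphism in $D^{\rm lf}(\A)$ from $C^\bullet_\pi:=({_\Pi}P^0\xrightarrow{{_\Pi}\phi}{_\Pi}P^1)$, placed in degrees zero and one, to $Y_\pi[-1]$. Completing this to a distinguished triangle produces a new object $C^{\Pi,\pi,\bullet}_{L,S}$ of $D^{\rm lf}(\A)$ whose cohomology, by the associated long exact sequence and the surjectivity of $\pi$, is concentrated in degrees zero and one with $H^0\cong{^\Pi}\co_{L,S}^\times$ and $H^1\cong\ker(\pi)$. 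Since $e_\pi$ annihilates $\calF\cdot\ker(\pi)$ by its very definition, the base change $\calF\cdot(e_\pi\cdot C^{\Pi,\pi,\bullet}_{L,S})$ has cohomology concentrated in a single degree, namely $e_\pi\cdot(\calF\cdot{^\Pi}\co_{L,S}^\times)$, and this has $\calF\cdot\A e_\pi$-rank $r$ by Lemma~\ref{fe}(i) applied Wedderburn component by Wedderburn component. Applying $\D_\A$ to $e_\pi\cdot C^{\Pi,\pi,\bullet}_{L,S}$ and invoking the passage-to-cohomology isomorphism~(\ref{ptc iso}) over $\calF$ together with the compatibility diagram~(\ref{eofs}), I would obtain a canonical identification
$$\calF\otimes_\Lambda\D_\A(e_\pi\cdot C^{\Pi,\pi,\bullet}_{L,S})\cong e_\pi\bigl(\calF\cdot{\bigcap}^r_\A({^\Pi}\co_{L,S}^\times)\bigr).$$
I would then define $\W^{\Pi,\pi}_{L/K,S}$ to be the image under this isomorphism of the integral lattice $\D_\A(e_\pi\cdot C^{\Pi,\pi,\bullet}_{L,S})$. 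Because the latter is a graded invertible $\xi(\A)e_\pi$-module, $\W^{\Pi,\pi}_{L/K,S}$ is automatically a locally-cyclic and full $\xi(\A)e_\pi$-submodule.

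The main technical hurdle will be showing that the resulting lattice is genuinely canonical, i.e.\ independent of the two-term representative $P^0\xrightarrow\phi P^1$ chosen for $C_{L,S}^\bullet$ and of the auxiliary data used to realize the morphism $C^\bullet_\pi\to Y_\pi[-1]$ at the level of complexes. This should follow from the functoriality of $\D_\A$ on $D^{\rm lf}(\A)_{\rm is}$, but requires a systematic verification on each localization $\Lambda_{(\p)}$, where the crucial fact to check is that two different choices differ by an automorphism of locally-free $\A_{(\p)}$-modules whose reduced norm lies in $\xi(\A_{(\p)})^\times$. Verifying this local unit property, and harnessing it to show that the trivializations on the right-hand side of the displayed isomorphism glue into a well-defined global element of $e_\pi(\calF\cdot{\bigcap}^r_\A({^\Pi}\co_{L,S}^\times))$, is where the bulk of the work will lie.
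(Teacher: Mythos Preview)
Your overall strategy---take the Weil-\'etale complex, apply $\Pi\otimes^{\mathbb{L}}_{\ZZ[G]}(-)$, feed in $\pi$, and use the reduced determinant functor together with passage-to-cohomology---is exactly the paper's approach. But there is a genuine gap at the integrality step.

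The problem is the phrase ``Applying $\D_\A$ to $e_\pi\cdot C^{\Pi,\pi,\bullet}_{L,S}$.'' The idempotent $e_\pi$ lies only in $\zeta(A)$; it need not belong to $\A$ or to $\xi(\A)$. Hence $e_\pi\cdot C^{\Pi,\pi,\bullet}_{L,S}$ is not an object of $D^{\rm lf}(\A)$, and $\D_\A$ cannot be applied to it. Relatedly, your conclusion that the resulting lattice is an \emph{invertible} $\xi(\A)e_\pi$-module is stronger than what the theorem asserts: the statement only claims the module is \emph{locally-cyclic}, i.e.\ locally a quotient of $\xi(\A_{(\frp)})e_\pi$, and in general this is all one gets precisely because $e_\pi$ is not integral. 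The paper resolves this by reversing the order of operations: for each $\frp$ one applies $\D_{\A_{(\frp)}}$ to the full complex ${_\Pi}C_{(\frp)}$ (obtaining a free rank-one $\xi(\A_{(\frp)})$-module), and only then maps it, via a homomorphism $\Theta_{\underline{b}_{\pi,\frp}}$ defined at the $\calF$-level and involving multiplication by $e_\pi$, into $e_\pi(\calF\cdot\bigcap^r_\A{^\Pi}\co_{L,S}^\times)$. The image $\W_\frp$ is then cyclic over $\xi(\A_{(\frp)})e_\pi$, and one sets $\W^{\Pi,\pi}_{L/K,S}:=\bigcap_\frp\W_\frp$. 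Fullness and the equality $(\W^{\Pi,\pi}_{L/K,S})_{(\frp)}=\W_\frp$ are then checked separately via Roiter's Lemma.

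Two smaller points. First, $H^1(C_{L,S})$ is the transpose Selmer group $\mathcal{S}^\emptyset_S(L)^{\rm tr}$, not $X_{L,S}$; they agree only after tensoring with $\calF$. This does not break your construction (the passage-to-cohomology step is over $\calF$ anyway) but should be corrected. Second, the independence you flag---of the representative $P^0\to P^1$---is automatic from $\D_\A$ being a functor on $D^{\rm lf}(\A)_{\rm is}$. The genuine well-definedness issue is that the map $\Theta_{\underline{b}_{\pi,\frp}}$ depends a priori on a choice of local basis of $Y_{\pi,(\frp)}$; one must check the image does not, and then that the local lattices $\W_\frp$ patch to a global lattice.
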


\begin{definition}{\em We refer to elements of the module $\W^{\Pi,\pi}_{L/K,S}$ constructed in the above result as `Weil-Stark elements' for the data $L/K, S,  \Pi$ and $\pi$.} \end{definition}


\begin{remark}\label{cases of r}{\em  Write $r_{L,S}^\Pi$ for the maximal possible rank of a locally-free $\mathcal{A}$-module quotient of ${_{\Pi}}X_{L,S}$. Then it is clear that $e_\pi\neq 0 \Longleftrightarrow {\rm rk}_\A(Y_\pi)=r_{L,S}^\Pi$ and the construction will also imply  $\W^{\Pi,\pi}_{L/K,S}\not= (0) \Longleftrightarrow {\rm rk}_\A(Y_\pi)=r_{L,S}^\Pi$. In addition, in natural cases it is possible to describe $r_{L,S}^\Pi$ explicitly. 
For example, if $(_{\Pi}X_{L,S})_{\rm tf}$ is a locally-free $\mathcal{A}$-module, as is automatically true if $\mathcal{A}$ is a Dedekind domain, then 
 $r_{L,S}^\Pi = {\rm rk}_{\mathcal{A}}((_{\Pi}X_{L,S})_{\rm tf})$. In general, if one fixes any place $v$ in $S$, then $_{\Pi}Y_{L,S\setminus\{v\}}$ is a quotient of $_{\Pi}X_{L,S}$ and so $|S|\geq r_{L,S}^\Pi\ge | S^{\Pi}_v|$ with
\[ S^{\Pi}_v := \{ v' \in S\setminus\{v\}: \text{ the }\mathcal{A}\text{-module }\,H_0(G_{v'},\Pi)_{\rm tf} \,\text{ is both non-zero and locally-free}\},\]
where $G_{v'}$ denotes the decomposition subgroup in $G$ of some choice of place of $L$ above $v'$. Note also that, in the setting of Example \ref{exam1}(i), the $\mathcal{A}$-module $H_0(G_{v'},\Pi_\kappa)_{\rm tf}$ is non-zero and locally-free if $\kappa$ sends each element of $G_{v'}$ to the identity of $A$. 

}\end{remark}

\subsubsection{}\label{weil recall section} 

Under the present hypothesis on $S$, the  complex $R\Gamma_{c}((\co_{L,S})_\W,\ZZ)$ constructed in \cite[\S2.2]{bks} in terms of `Weil-\'etale cohomology' is an object of $D^{{\rm lf}}(\ZZ[G])$ that is defined up to canonical isomorphism. The linear dual complex 
\[ C_{L,S}:=R\Hom_\ZZ(R\Gamma_{c}((\co_{L,S})_\W,\ZZ),\ZZ)[-2]\]
is therefore also an object of $D^{{\rm lf}}(\ZZ[G])$ that is defined up to canonical isomorphism. 

In the sequel, for any finite set of places $T$ of $K$ that is disjoint from $S$, we  write $\mathcal{S}^T_S(L)^{{\rm tr}}$ for the $S$-relative $T$-trivialized transpose Selmer group of $\mathbb{G}_m$, as defined in \cite[\S2]{bks}. 

\begin{lemma}\label{Picoh} Let ${_\Pi}C$ denote the complex $\Pi\otimes_{\ZZ[G]}^{\mathbb{L}}C_{L,S}$ in $D(\A)$. Then ${_\Pi}C$ belongs to $D^{\rm lf}(\A)$ and is acyclic outside degrees zero and one with $H^0({_\Pi}C) = {^\Pi}\mathcal{O}_{L,S}^\times$ and $H^1({_\Pi}C)= {_\Pi}\mathcal{S}^\emptyset_S(L)^{{\rm tr}}$. \end{lemma}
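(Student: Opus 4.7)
The strategy is to represent $C_{L,S}$ by a two-term complex of finitely generated projective $\ZZ[G]$-modules placed in degrees zero and one, and then directly invoke Lemma \ref{trace sequences}. First, I would use the explicit Weil-\'etale construction of $C_{L,S}$ reviewed in \cite{bks} to exhibit a quasi-isomorphic representative of the form $[P^0 \xrightarrow{\phi} P^1]$, placed in cohomological degrees zero and one, where $P^0$ and $P^1$ are finitely generated projective (equivalently, locally-free) $\ZZ[G]$-modules with $\ker(\phi) = H^0(C_{L,S}) = \mathcal{O}_{L,S}^\times$ and $\cok(\phi) = H^1(C_{L,S}) = \mathcal{S}_S^\emptyset(L)^{\rm tr}$.

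Granted such a representative, and because $P^0$ and $P^1$ are flat over $\ZZ[G]$, the derived tensor product ${_\Pi}C = \Pi\otimes^{\mathbb{L}}_{\ZZ[G]} C_{L,S}$ is represented by the ordinary tensor-product complex $[{_\Pi P^0} \xrightarrow{{_\Pi}\phi} {_\Pi P^1}]$, placed in degrees zero and one. Since $\Pi$ is locally-free over $\A$ and each $P^i$ is a direct summand of a finitely generated free $\ZZ[G]$-module, each ${_\Pi P^i}$ is a direct summand of a finite power of $\Pi$, and hence itself locally-free over $\A$ by \cite[Prop. (8.19), Vol. I]{curtisr}. This gives the first assertion, that ${_\Pi}C \in D^{\rm lf}(\A)$.

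To identify the cohomology, I would apply Lemma \ref{trace sequences} to $\phi$: since $P^0$ and $P^1$ are cohomologically-trivial, the lemma produces a canonical exact sequence of $\A$-modules
\[
0 \to {^\Pi \mathcal{O}_{L,S}^\times} \to {_\Pi P^0} \xrightarrow{{_\Pi}\phi} {_\Pi P^1} \to {_\Pi \mathcal{S}_S^\emptyset(L)^{\rm tr}} \to 0,
\]
which at once yields both the acyclicity of ${_\Pi C}$ outside degrees zero and one and the identifications $H^0({_\Pi C}) = {^\Pi \mathcal{O}_{L,S}^\times}$ and $H^1({_\Pi C}) = {_\Pi \mathcal{S}_S^\emptyset(L)^{\rm tr}}$, as required.

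The main obstacle I anticipate is the first step: justifying the existence of a two-term projective representative $[P^0 \xrightarrow{\phi} P^1]$ of $C_{L,S}$ with the prescribed kernel and cokernel. This is not a formal consequence of the perfectness of $C_{L,S}$ together with the shape of its cohomology alone, since, for example, the torsion part of $\mathcal{S}_S^\emptyset(L)^{\rm tr}$ can have arbitrarily large projective dimension over $\ZZ[G]$. Instead, one relies on the specific construction of $C_{L,S}$ in \cite{bks}, where such a representative is produced by first introducing an auxiliary finite set of primes $T$ (disjoint from $S$) so that the relevant $T$-trivialised Selmer group has a two-term projective resolution, and then passing to the case $T=\emptyset$ in a manner compatible with the claimed cohomological identifications.
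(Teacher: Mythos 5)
There is a genuine gap in your first step, and it is exactly the one you flag yourself. A two-term representative $[P^0\xrightarrow{\phi}P^1]$ of $C_{L,S}$ with both $P^0$ and $P^1$ finitely generated projective $\ZZ[G]$-modules and $\ker(\phi)=\mathcal{O}_{L,S}^\times$ \emph{cannot exist}: the kernel of a map between projective $\ZZ[G]$-modules is $\ZZ$-torsion-free, whereas $\mathcal{O}_{L,S}^\times$ always contains the nontrivial torsion group of roots of unity in $L$. The paper only produces such a two-term projective representative for the $T$-modified complex $C_{L,S,T}$ (in Proposition \ref{yoneda}), and the argument there hinges on the fact that $\mathcal{O}_{L,S,T}^\times$ is torsion-free; that argument breaks down at $T=\emptyset$. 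Your proposed remedy --- introduce $T$ and then ``pass to $T=\emptyset$ in a manner compatible with the claimed identifications'' --- is not a proof: the relation between $C_{L,S,T}$ and $C_{L,S}$ is the exact triangle (\ref{Ttriangle}), not a quasi-isomorphism, so a nice representative of the former gives you a long exact sequence for the latter, not a two-term projective resolution.

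The paper's actual proof sidesteps the need for a two-term representative entirely. That $_\Pi C\in D^{\rm lf}(\A)$ follows simply because $C_{L,S}\in D^{\rm lf}(\ZZ[G])$ has \emph{some} bounded representative by finitely generated locally-free $\ZZ[G]$-modules (not necessarily concentrated in degrees $0$ and $1$), and $_\Pi(-)$ carries such modules to locally-free $\A$-modules. Acyclicity in degrees $>1$ and the identification $H^1({_\Pi}C)={_\Pi}H^1(C)$ follow from right-exactness of the left derived tensor product. For degree $0$ and negative degrees the paper replaces your use of Lemma \ref{trace sequences} by a hypercohomology spectral sequence $H^b(G,\Pi\otimes_\ZZ H^a(C))\Rightarrow H^{a+b}({_\Pi}C)$, whose existence rests on Remark \ref{ct remark} (the trace map $T_{\Pi,P}$ is an isomorphism on cohomologically-trivial $P$, so $\Pi\otimes_{\ZZ[G]}^{\mathbb{L}}-$ agrees with $R\Gamma(G,\Pi\otimes_\ZZ-)$ on perfect complexes). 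Since $H^a(C)$ vanishes for $a<0$, this gives vanishing of $H^i({_\Pi}C)$ for $i<0$ and the edge identification $H^0({_\Pi}C)=H^0(G,\Pi\otimes_\ZZ H^0(C))={^\Pi}\mathcal{O}_{L,S}^\times$. If you want to keep an argument in the spirit of Lemma \ref{trace sequences}, you should apply it to a \emph{bounded} complex of projectives of arbitrary length representing $C_{L,S}$, taking care that the kernel at the left end then computes hypercohomology rather than naive cohomology --- which is, in effect, what the spectral sequence encodes.
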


\begin{proof} Since $C_{L,S}$ belongs to $D^{{\rm lf}}(\ZZ[G])$, and ${_\Pi}M$ is a locally-free $\A$-module for any locally-free $\ZZ[G]$-module $M$, it is clear that ${_\Pi}C$ belongs to $D^{\rm lf}(\A)$.

We next recall from \cite[Rem. 2.7]{bks} that the complex $C:= C_{L,S}$ is acyclic outside degrees zero and one and that there are canonical identifications $H^0(C) = \mathcal{O}_{L,S}^\times$ and $H^1(C)= \mathcal{S}^\emptyset_S(L)^{{\rm tr}}$. Since the complex ${_\Pi}C$ is obtained by applying the left derived functor $\Pi\otimes_{\ZZ[G]}^{\mathbb{L}}-$ to $C$, these facts imply that ${_\Pi}C$ is acyclic in degree greater than one and that there is a canonical identification $H^1({_\Pi}C)={_\Pi}H^1(C)={_\Pi}\mathcal{S}^\emptyset_S(L)^{{\rm tr}}$.

In addition, by taking a (bounded) resolution of the perfect complex $C$ by finitely generated projective $G$-modules, and using Remark \ref{ct remark}, one deduces the existence of a convergent cohomological spectral sequence of the form $H^b(G,\Pi\otimes_\ZZ H^{a}(C))\Longrightarrow\,H^{b+a}({_{\Pi}}C).$ 
 This spectral sequence implies that ${_\Pi}C$ is acyclic in all negative degrees and induces a canonical identification $H^0({_\Pi}C)={^{\Pi}}H^0(C)= {^\Pi}\mathcal{O}_{L,S}^\times$.
\end{proof}

\begin{remark}\label{lich rem}{\em The terminology of `Weil-\'etale cohomology with compact support' used above is motivated by the following facts (taken from \cite[Rem. 2.5]{bks}).

(i) Assume $L$ is a function field. Write $C_L$ for the corresponding smooth projective curve, $C_{L,W{\rm\acute e t}}$ for the Weil-\'etale site on $C_L$ that is defined by Lichtenbaum in \cite[\S2]{lichtenbaum} and $j$ for the open immersion ${\rm Spec}(\mathcal{O}_{L,S}) \longrightarrow C_L$. Then $R\Gamma_{c}((\co_{L,S})_\W,\ZZ)$ is canonically isomorphic to the complex $R\Gamma(C_{L,W{\rm\acute e t}},j_{!}\ZZ)$ that computes the cohomology with compact support of the constant sheaf $\ZZ$ on $C_{L,W{\rm\acute e t}}$.

(ii) Assume that $L$ is a number field. In this case there has as yet been no construction of a `Weil-\'etale topos' for $Y_S:= {\rm Spec}(\mathcal{O}_{L,S})$ with all of the properties that are conjectured by Lichtenbaum in \cite{lichtenbaum1}. However, if $\overline{Y_S}$ is a compactification of $Y_S$ and $\phi$ is the natural inclusion $Y_S\subset \overline{Y_S}$, then it can be shown that, should such a topos exist with all of the expected properties, the groups $H^i(R\Gamma_{c}((\co_{L,S})_\W,\ZZ))$ would be canonically isomorphic to the groups $H^i_c(Y_S,\ZZ) := H^i(\overline{Y_S},\phi_!\ZZ)$ discussed in \cite{lichtenbaum1}.}
\end{remark}

\subsubsection{}\label{epi} Turning now to the proof of Theorem \ref{main result}, we use reduced determinant functors (cf. \S\ref{332}). For each $\frp\in{\rm Spec}(\Lambda)$ we fix, as we may, an $\A_{(\frp)}$-basis $\underline{b}_{\pi,\frp}$ 
of $Y_{\pi,(\frp)}$. We then note that this choice of basis induces a composite homomorphism of $\xi(\A_{(\frp)})$-modules
\begin{align}\label{Thetap}\Theta_{\underline{b}_{\pi,\frp}}:\D_{A}(\calF\cdot {_\Pi}C)
\cong&\,\,\D^\diamond_{A}\bigl(\calF\cdot{^\Pi}\mathcal{O}_{L,S}^\times\bigr)\otimes_{\zeta(A)}\D^\diamond_{A}\bigl(\calF\cdot{_\Pi}\mathcal{S}^\emptyset_S(L)^{\rm tr}\bigr)^{-1}\\ \notag
\to&\,\,\D^\diamond_{Ae_\pi}\bigl(e_\pi(\calF\cdot{^\Pi}\mathcal{O}_{L,S}^\times)\bigr)\otimes_{\zeta(A)e_\pi}\D^\diamond_{Ae_\pi}\bigl(e_\pi(\calF\cdot{_\Pi}X_{L,S})\bigr)^{-1}\\ \notag
\cong&\,\,\D^\diamond_{Ae_\pi}\bigl(e_\pi(\calF\cdot{^\Pi}\mathcal{O}_{L,S}^\times)\bigr)\otimes_{\zeta(A)e_\pi}\D^\diamond_{Ae_\pi}\bigl(e_\pi(\calF\cdot Y_\pi)\bigr)^{-1}\\ \notag
\cong&\,\,\D^\diamond_{Ae_\pi}\bigl(e_\pi(\calF\cdot{^\Pi}\mathcal{O}_{L,S}^\times)\bigr)\\ \notag
=&\,\,e_\pi\bigl(\calF\cdot{{{\bigcap}}}^r_{\A}{^\Pi}\mathcal{O}_{L,S}^\times\bigr).
\end{align}
Here the first isomorphism is induced by the `passage to cohomology' map (\ref{ptc iso}) with $X = {_\Pi}C$ and the explicit descriptions of cohomology given in Lemma \ref{Picoh}, the arrow is induced by multiplication by $e_\pi$ and the fact that $\calF\cdot\mathcal{S}^\emptyset_S(L)^{\rm tr} = \calF\cdot X_{L,S}$, the second isomorphism is induced by $\pi$, and the third isomorphism is induced by the isomorphism

\begin{equation}\label{diamond}\zeta(A)e_\pi\cong\D^\diamond_{Ae_\pi}\bigl(e_\pi(\calF\cdot Y_{\pi})\bigr), \quad e_\pi\mapsto e_\pi\wedge_{i=1}^{i=r}b_{\frp,i}.\end{equation}
The final equality in (\ref{Thetap}) and the fact that (\ref{diamond}) is an isomorphism follow from \cite[Prop. 5.9]{bses}.

If one regards $\D_{\A_{(\frp)}}({_\Pi}C_{(\frp)})$ as a submodule of $\D_{A}(\calF\cdot {_\Pi}C)=\calF\cdot\D_{\A_{(\frp)}}({_\Pi}C_{(\frp)})$ in the natural way (cf. \S\ref{332}), then \cite[Lem. 4.13]{bses} implies that the $\xi(\mathcal{A}_{(\frp)})e_\pi$-lattice $$\W_\frp:=\Theta_{\underline{b}_{\pi,\frp}}\bigl(\D_{\A_{(\frp)}}({_\Pi}C_{(\frp)})\bigr)$$ is independent of the choice of basis $\underline{b}_{\pi,\frp}$ of $Y_{\pi,(\frp)}$.  We therefore obtain a canonical $\xi(\A)e_\pi$-submodule of $e_\pi(\calF\cdot{{{\bigcap}}}^r_{\A}{^\Pi}\mathcal{O}_{L,S}^\times)$ by setting
$$\W_{L/K,S}^{\Pi,\pi}:={\bigcap}_{\frp\in {\rm Spec}(\Lambda)}\W_{\frp}.$$ 
In addition, as the $\xi(\A_{(\frp)})$-module $\D_{\A_{(\frp)}}({_\Pi}C_{(\frp)})$ is free of rank one,  $\W_\frp$ is isomorphic to a quotient of $\xi(\A_{(\frp)})e_\pi$. The proof of Theorem \ref{main result} is thus completed by the following result.

\begin{lemma}\label{CRlemma} The module $\W^{\Pi,\pi}_{L/K,S}$ is full in
$e_\pi\bigl(\calF\cdot{{{\bigcap}}}_{\A}^r(^{\Pi}\co^\times_{L,S})\bigr)$ and, for each $\frp\in {\rm Spec}(\Lambda)$, satisfies $(\W_{L/K,S}^{\Pi,\pi})_{(\frp)}=\W_\frp$.
\end{lemma}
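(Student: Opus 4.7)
The plan is to exhibit an auxiliary global $\xi(\A)e_\pi$-lattice $\W_0\subset V:=e_\pi\bigl(\calF\cdot{{{\bigcap}}}_{\A}^r({^\Pi}\co^\times_{L,S})\bigr)$ whose localization at almost every prime coincides with $\W_\frp$, and then to invoke the standard local-global principle for lattices over a Dedekind domain.

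First I would verify that each $\W_\frp$ is a full $\xi(\A_{(\frp)})e_\pi$-lattice in $V$. This follows because $\D_{\A_{(\frp)}}({_\Pi}C_{(\frp)})$ is a free rank-one $\xi(\A_{(\frp)})$-module (by construction of $\D_\A$) and because each arrow in the composite (\ref{Thetap}) becomes an isomorphism after tensoring with $\calF$: the first uses the canonicity of (\ref{ptc iso}) together with the explicit description of cohomology in Lemma \ref{Picoh}, the second arrow is an isomorphism since $e_\pi$ kills the kernel of $\pi$ by definition, the third comes from $\pi$ being surjective, and the last uses that (\ref{diamond}) is an isomorphism, as already recorded in the text.

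Next I would construct $\W_0$. The only step of (\ref{Thetap}) that depends on $\frp$ is the trivialization (\ref{diamond}), which is induced by the choice of local basis $\underline{b}_{\pi,\frp}$. Since $\calF\cdot Y_\pi$ is a free $A$-module of rank $r$, the reduced exterior power formalism identifies $\calF\cdot\mathcal{L}$ with a free rank-one $\zeta(A)e_\pi$-module, where $\mathcal{L}:=e_\pi\cdot{{{\bigcap}}}^r_\A Y_\pi$ is a locally-free $\xi(\A)e_\pi$-lattice. Choosing any $\zeta(A)e_\pi$-generator of $\calF\cdot\mathcal{L}$ and clearing denominators in $\Lambda$ yields an element $y\in\mathcal{L}$ for which the quotient $\mathcal{L}/y\xi(\A)e_\pi$ is a finitely generated $\Lambda$-torsion module, hence supported on a finite set $T\subset{\rm Spec}(\Lambda)$. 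Replacing $e_\pi\wedge_i b_{\pi,\frp,i}$ by $y$ throughout (\ref{Thetap}) gives a globally-defined composite $\Theta^y:\D_A(\calF\cdot{_\Pi}C)\to V$, and I would define $\W_0:=\Theta^y(\D_\A({_\Pi}C))$. For each $\frp\notin T$, the element $y$ generates $\mathcal{L}_{(\frp)}$ over $\xi(\A_{(\frp)})e_\pi$, hence differs from $e_\pi\wedge_i b_{\pi,\frp,i}$ by a unit of $\xi(\A_{(\frp)})e_\pi$; a direct check with the trivialization (\ref{diamond}) shows that such a change of trivializer multiplies the resulting lattice by this unit, and so yields $(\W_0)_{(\frp)}=\W_\frp$.

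Finally, the lemma follows from the standard gluing principle: for a finite-dimensional $\calF$-vector space $V$ and a family of full $\Lambda_{(\frp)}$-lattices $(M_\frp)_{\frp\in{\rm Spec}(\Lambda)}$ in $V$ that agree with the localizations of some $\Lambda$-lattice $M_0\subset V$ at all but finitely many $\frp$, the intersection $\bigcap_\frp M_\frp$ is a full $\Lambda$-lattice in $V$ whose localization at any prime $\frp$ is $M_\frp$. Applied to the family $\{\W_\frp\}$ and the lattice $\W_0$, this yields the claimed identity $(\W_{L/K,S}^{\Pi,\pi})_{(\frp)}=\W_\frp$ and hence also the fullness of $\W_{L/K,S}^{\Pi,\pi}$ in $V$ (inherited from that of any single $\W_\frp$).

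The main obstacle will be the middle step: specifically, the verification that replacing $e_\pi\wedge_i b_{\pi,\frp,i}$ by the global element $y$ in (\ref{Thetap}) changes the resulting lattice only by multiplication by a unit in $\xi(\A_{(\frp)})e_\pi$ for each $\frp\notin T$. This rests on the functoriality of the determinant trivialization (\ref{diamond}) in the choice of generator of $\calF\cdot\mathcal{L}$, together with the compatibility of reduced determinants under scalar extension recorded in (\ref{eofs}).
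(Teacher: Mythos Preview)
Your proof is correct and follows essentially the same strategy as the paper: construct an auxiliary global $\xi(\A)e_\pi$-lattice that agrees with $\W_\frp$ at almost all primes, then invoke the standard local--global principle for lattices over a Dedekind domain (which the paper cites as \cite[Prop.~(4.21)(vii)]{curtisr}).

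The only notable difference lies in how the auxiliary lattice is built. You work directly at the level of the determinant lattice $\mathcal{L}=e_\pi\cdot{\bigcap}^r_\A Y_\pi$, choosing a generic generator $y$ and analysing how replacing $e_\pi\wedge_i b_{\frp,i}$ by $y$ alters the trivialisation~(\ref{diamond}). The paper instead applies Roiter's Lemma \cite[Lem.~(31.6)]{curtisr} to $Y_\pi$ itself, obtaining a free $\A$-submodule $X\subseteq Y_\pi$ of rank $r$ with $X_{(\frp)}=Y_{\pi,(\frp)}$ for almost all $\frp$; a global $\A$-basis $\underline{x}$ of $X$ then serves as the local basis $\underline{b}_{\pi,\frp}$ for all such $\frp$, so that $\Theta_{\underline{x}_\frp}$ is literally the same map for every $\frp$ outside a finite set. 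The paper's route is marginally cleaner because it sidesteps the verification you flag as your ``main obstacle'' (that changing the trivialiser in~(\ref{diamond}) only scales by a unit of $\xi(\A_{(\frp)})e_\pi$): with an actual $\A$-basis in hand, no such comparison is needed. Your route, on the other hand, makes transparent that only the determinant of $Y_\pi$ matters, not $Y_\pi$ itself.
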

\begin{proof}  It will be enough to prove the existence of a finite set $U$ of primes of $\Lambda$ with the following property: for every prime $\frp\notin U$ there exists an $\A_{(\frp)}$-basis $\underline{b}_{\pi,\frp}$ of $Y_{\pi,(\frp)}$, for which the $\xi(\A)e_\pi$-module $\Theta_{\underline{b}_{\pi,\frp}}(\D_{\A}({_\Pi}C))$ is independent of the choice of $\frp\notin U$. Indeed, since for each prime $\frp$ of $\Lambda$ one has $\D_{\A_{(\frp)}}({_\Pi}C_{(\frp)})=\D_{\A}({_\Pi}C)_{(\frp)}$, we will then be able to apply the general result \cite[Prop. (4.21)(vii)]{curtisr} to the $\xi(\A)e_\pi$-module $M=\Theta_{\underline{b}_{\pi,\frp_0}}(\D_{\A}({_\Pi}C))$, defined via an arbitrary prime $\frp_0\notin U$. We also note that  $M$ is a full lattice in $e_\pi\bigl(\calF\cdot{{{\bigcap}}}_{\A}^r(^{\Pi}\co^\times_{L,S})\bigr)$ since $\calF\cdot\D_{\A}({_\Pi}C)=\D_A(\calF\cdot{_\Pi}C)$ by the commutativity of (\ref{eofs}).

The required independence condition in turn is valid because Roiter's Lemma \cite[Lem. (31.6)]{curtisr} implies that there exists a free $\A$-submodule $X$ of $Y_\pi$ of rank $r$, and hence also a finite set $U$ of primes of $\Lambda$ such that $X_{(\frp)}=Y_{\pi,(\frp)}$ for all $\frp\notin U$. Fixing an arbitrary $\A$-basis $\underline{x}=\{x_i\}_{i \in [r]}$ of $X$ we write $\underline{x}_\frp$ for the induced basis of $Y_{\pi,(\frp)}$ for each $\frp\notin U$.

Then $\Theta_{\underline{x}_{\frp}}$ is independent of $\frp\notin U$ as it is induced by the isomorphism of $\zeta(A)e_\pi$-modules
$$\zeta(A)e_\pi\cong\D^\diamond_{Ae_\pi}\bigl(e_\pi(\calF\cdot X)\bigr)=\D^\diamond_{Ae_\pi}\bigl(e_\pi(\calF\cdot Y_{\pi})\bigr), \quad e_\pi\mapsto e_\pi\wedge_{i=1}^{i=r}x_{i}$$
as in (\ref{diamond}) (and had a priori no other dependence on the choice of basis).
\end{proof}



\begin{remark}\label{unramified extensions}{\em By a classical result of Fr\"ohlich \cite{af}, there exist unramified Galois extensions of number fields $L/K$ with $G$ isomorphic to any prescribed finite group. In any such case one can take the set $S$ in \S\ref{dataasin} to be $S_K^\infty$. Then all  places in $S$ split completely in $L$ and Lemma \ref{fe}(i) implies $a := |S_K^\infty|$
is the maximal integer for which ${\bigcap}^a_{\ZZ[G]}\mathcal{O}_{L}^\times\not= (0)$. However, the above argument does not construct Weil-Stark elements in the latter module since $X_L$ has no $\ZZ[G]$-locally-free quotient of rank $a$ (cf. Remark \ref{cases of r}). To overcome this problem, we write $I_G$ for the augmentation ideal of $\ZZ[G]$, $\eins$ for the trivial character of $G$, $\tilde e$ for the idempotent $1-e_{\eins}$ of $\zeta(\QQ[G])$ and $B$ for the algebra $\QQ[G]\tilde e = \QQ\cdot I_G$. We write $\B$ for the order $\ZZ[G]\tilde e$ in $B$  and $\Pi$ for $\B$, considered as a $(\B,\ZZ[G])$-bimodule in the obvious way. Then $\B$ is locally-Gorenstein only when $G$ is cyclic. In all cases, however, the $G$-module $X_L$ is isomorphic to the direct sum $I_G\oplus\ZZ[G]^{\oplus (a-1)}$ and so the $\B$-module $Y:=({_{\Pi}}X_{L})_{\rm tf}$ is free of rank $a$ and one can take the map $\pi$ in (\ref{choice of pi}) to be the projection ${_{\Pi}}X_{L}\to Y$. In this case one has $e_\pi=1$ 
and so the construction of Theorem \ref{main result} gives a canonical $\xi(\B)$-module 
\[\W^{a}_{L/K}:=\W^{\Pi,\pi}_{L/K,S^\infty_K} \subset \QQ\cdot{{\bigcap}}^a_{\B}\bigl({^{\Pi}}\mathcal{O}_{L}^\times\bigr)= \QQ\cdot{{\bigcap}}^a_{\ZZ[G]}\mathcal{O}_{L}^\times,\]
where the last equality is a consequence of Lemma \ref{fe}.} 
\end{remark}

\subsection{Cyclotomic units} 

As  first concrete example, in this section we describe the link between cyclotomic units and Weil-Stark elements. To do this we take $K= \QQ$ and fix a non-trivial real abelian extension $L$ of $K$ in $\CC$ and set $G := \Gal(L/K)$. We write $m= m_L$ for the conductor of $L$ so that $L\subseteq \QQ(e^{2\pi i/m})$ (by the Kronecker-Weber Theorem). 

We note that, since $L$ is real, the $G$-module $Y_{L,\{\infty\}}$ is a free of rank one. We write $S$ for the set of places of $\QQ$ comprising $\infty$ and all prime divisors of $m$ and consider the canonical surjective homomorphism of $G$-modules $\pi :  X_{L,S} \to Y_\pi:=Y_{L,\{\infty\}}$.
%


\begin{theorem}\label{cyclo-weil thm} Fix $L/K, S$ and $\pi$ as above and regard $\Pi := \ZZ[G]$ as a $(\ZZ[G],\ZZ[G])$-bimodule in the obvious way. Then, in $\QQ\otimes_\ZZ \mathcal{O}^\times_{L,S}$, there is an equality 
\[ \W^{\Pi,\pi}_{L/K,S} = \ZZ[G]\cdot {\rm N}_{\QQ(e^{2\pi i/m})/L}(1-e^{2\pi i/m})^{1/2}.\] 
\end{theorem}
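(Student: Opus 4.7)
The plan is to unwind the construction of $\W^{\Pi,\pi}_{L/K,S}$ explicitly in this rank-one, commutative case and to identify the resulting canonical generator with $\eta^{1/2}:=\N_{\QQ(e^{2\pi i/m})/L}(1-e^{2\pi i/m})^{1/2}$.

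Since $\A=\Pi=\ZZ[G]$ is commutative, Remark \ref{normalization rem}(i) gives $\xi(\A)=\A$ and reduces all constructions to classical exterior powers. Because $L$ is totally real, $\infty$ splits completely in $L/\QQ$, so $Y_\pi=Y_{L,\{\infty\}}\cong\ZZ[G]$ and $r=1$; hence by Lemma \ref{CRlemma}, $\W^{\Pi,\pi}_{L/K,S}$ is a rank-one $\ZZ[G]e_\pi$-lattice in $e_\pi\bigl(\QQ\otimes_\ZZ\mathcal{O}_{L,S}^\times\bigr)$. Moreover, as $\ker(\pi)$ is the degree-zero submodule of $Y_{L,S\setminus\{\infty\}}$, a direct check shows that $e_\pi$ is the sum of those idempotents $e_\chi$ for which $L_S(\chi,s)$ has vanishing order exactly one at $s=0$. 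Fixing a place $w_0$ of $L$ above $\infty$ and taking $\underline b_\pi=w_0$, Lemma \ref{Picoh} gives $\D_{\ZZ[G]}(C_{L,S})$ free of rank one over $\ZZ[G]$, generated by some $\mathfrak z$. Chasing $\mathfrak z$ through (\ref{Thetap}) --- via the passage-to-cohomology isomorphism (\ref{ptc iso}), the canonical comparison $\QQ\cdot\mathcal{S}_S^\emptyset(L)^{\rm tr}=\QQ\cdot X_{L,S}$, the projection $\pi$, and the basis $w_0$ --- produces a canonical generator
\[ \eta':=\Theta_{w_0}(\mathfrak z)\in e_\pi\bigl(\QQ\otimes_\ZZ\mathcal{O}_{L,S}^\times\bigr), \qquad \W^{\Pi,\pi}_{L/K,S}=\ZZ[G]\cdot\eta', \]
which is characterised (up to sign) by the equivariant regulator identity
\[ (\pi_\RR\circ R_L)(\eta')=e_\pi\cdot\theta^{\ast}_{L/\QQ,S}(0), \]
where $R_L$ is the Dirichlet regulator and $\theta^{\ast}_{L/\QQ,S}(0):=\sum_\chi L_S'(\chi,0)e_\chi$ is the leading term at $s=0$ of the $S$-truncated equivariant $L$-series.

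The identification $\eta'=\pm\eta^{1/2}$ then reduces to a character-by-character computation. For each non-trivial $\chi\in\widehat G$ with $e_\chi\in e_\pi$, the classical Dirichlet/Hecke formula
\[ L_S'(\chi,0)=-\sum_{\sigma\in G}\chi(\sigma)\log|\sigma\cdot\eta^{1/2}|_{w_0}, \]
obtained by combining the standard expression of $L'(\chi,0)$ in terms of $\log|1-e^{2\pi i a/f_\chi}|$ with the Euler-factor correction $L_S'(\chi,0)=\bigl(\prod_{p\mid m,\,p\nmid f_\chi}(1-\chi(\sigma_p))\bigr)\cdot L'(\chi,0)$ and the definition of $\eta$, shows that $(\pi_\RR\circ R_L)(\eta^{1/2})$ and $(\pi_\RR\circ R_L)(\eta')$ have the same $\chi$-component. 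The trivial character (only contributing when $m$ is a prime power) is handled directly from the Laurent expansion of $\zeta_S(s)=\zeta(s)(1-p^{-s})$ at $s=0$. Together, these yield $\eta'=u\cdot\eta^{1/2}$ for some unit $u\in(\ZZ[G]e_\pi)^\times$.

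The hardest part --- and the main obstacle --- is upgrading this equality up to a unit to an equality of $\ZZ[G]e_\pi$-lattices: equivalently, showing that $\Theta_{w_0}$ sends the integral generator $\mathfrak z$ of $\D_{\ZZ[G]}(C_{L,S})$ exactly onto $\ZZ[G]\cdot\eta^{1/2}$. This is the integrality content of the equivariant Tamagawa number conjecture for the Tate motive $h^0(\Spec L)$ over $\ZZ[G]$, which for real abelian $L/\QQ$ has been proved by Burns-Greither and Flach. Invoking their theorem delivers the required equality $\W^{\Pi,\pi}_{L/K,S}=\ZZ[G]\cdot\eta^{1/2}$.
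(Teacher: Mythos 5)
Your overall approach matches the paper's: both invoke the known validity of the equivariant Tamagawa number conjecture for $h^0(\Spec L)$ over $\ZZ[G]$ (Burns--Greither and Flach), reduce the identification to the explicit formula for $L'_S(\chi,0)$ in terms of cyclotomic logarithms, and close via injectivity of the Dirichlet regulator. However, there are two places where the logical flow is off as written, and together they amount to a real gap in the argument.

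First, Lemma~\ref{Picoh} does \emph{not} give that $\D_{\ZZ[G]}(C_{L,S})$ is a free $\ZZ[G]$-module. That lemma only establishes that ${}_\Pi C$ lies in $D^{\rm lf}(\A)$ and computes its cohomology; for general $L/\QQ$ the determinant is an invertible $\ZZ[G]$-module whose class in $\Pic(\ZZ[G])$ need not vanish a priori. The freeness, and more importantly the existence of a basis $\mathfrak z_L$ satisfying the precise leading-term identity $\vartheta_L(\mathfrak z_L)=\bigl(\sum_{\chi}L^\ast_S(\chi^{-1},0)e_\chi,0\bigr)$, is exactly what the ETNC input provides (the paper obtains both simultaneously from ETNC combined with \cite[Prop.~3.4]{bks}). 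You instead assert the regulator identity for $\eta'=\Theta_{w_0}(\mathfrak z)$ as a ``characterisation'' of $\eta'$, but $\mathfrak z$ has been chosen as an arbitrary basis, so this identity does not follow; it has to be imposed by invoking ETNC at that point.

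Second, and in consequence, the structure ``character computation gives $\eta'=u\eta^{1/2}$ for some unit $u$, then ETNC upgrades this to an equality of lattices'' is misleading. Once ETNC is invoked at the correct point to normalise $\mathfrak z_L$, the comparison $R_{L,S}(\Theta_{w}(\mathfrak z_L))=R_{L,S}(\varepsilon_L)$ (with $\varepsilon_L={\rm N}_{\QQ(e^{2\pi i/m})/L}(1-e^{2\pi i/m})^{-1/2}$, which generates the same $\ZZ[G]$-module as $\eta^{1/2}$) holds on the nose, and injectivity of $R_{L,S}$ forces $\Theta_{w}(\mathfrak z_L)=\varepsilon_L$ outright; there is no separate ``integral upgrade'' step. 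Your two-stage ``rational then integral'' picture is a reasonable heuristic, but the way you have set it up the first stage is unjustified and the second stage has nothing left to do. Restructuring so that ETNC is invoked once, up front, to produce the normalised basis $\mathfrak z_L$, and then carrying out the regulator/$L'$-formula computation you outline, reproduces the paper's argument.
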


\begin{proof} We consider the composite isomorphism of graded invertible $\RR[G]$-modules
\[ \vartheta_L: \RR \cdot {\rm d}_{\ZZ[G]}(C_{L,S}) \xrightarrow{\sim} 
{\rm d}_{\RR[G]}
(\RR \cdot \mathcal{O}_{L,S}^\times) \otimes_{\RR[G]} {\rm d}_{\RR[G]}(\RR \cdot X_{L,S})^{-1} \xrightarrow{\sim} (\RR[G],0).\]
%
Here the first arrow is the canonical passage to cohomology map and the second is induced by sending each element $u\otimes \theta$ in the tensor product to $\theta({\rm d}_{\RR[G]}(R_{L,S})(u))$.

%
%

The equivariant Tamagawa number conjecture for the pair $(h^0({\rm Spec} (L), \ZZ[G])$ is valid by work of Greither and the first author \cite{bg} and of Flach \cite{fg}. This fact combines with \cite[Prop. 3.4]{bks} to imply the $\ZZ[G]$-module ${\rm d}_{\ZZ[G]}(C_{L,S})$ is free of rank one with a 
basis  $\mathfrak{z}_{L}$ such that 
\begin{equation}\label{cyclo etnc} \vartheta_L(\mathfrak{z}_L) = \bigl({\sum}_{\chi\in \widehat{G}} L^\ast_{S}(\chi^{-1}, 0)e_\chi,0\bigr),\end{equation}
where $L^\ast_{S}(\chi^{-1}, 0)$ denotes the leading term at $s=0$ of the $S$-truncated $L$-series $L_{S}(\chi^{-1}, s)$.

In the rest of the argument we shall show that this fact implies the claimed result. To do this we note that the $G$-module $Y_\pi = Y_{L,\{\infty\}}$ is free of rank one, with basis the place $w$ that is induced by the given inclusion $L \subset \CC$. By using this basis element,  the constructions in (\ref{Thetap}) and (\ref{diamond}) (with $A = \QQ[G]$, $\Pi = \ZZ[G]$, $\pi$ as above and $r = 1$) therefore give rise to a homomorphism of $\QQ[G]$-modules $\Theta_w: \QQ \cdot {\rm d}_{\ZZ[G]}(C_{L,S}) \to e_\pi\bigl(\QQ\cdot\mathcal{O}_{L,S}^\times\bigr)$.

Now the module $\W^{\Pi,\pi}_{L/K,S}$ can be computed by using, for every prime $p$, the map $\Theta_{b_{\pi,p}}$ that arises from the basis $b_{\pi,p}$ of the $\ZZ_{(p)}[G]$-module $Y_{\pi,(p)}$ that corresponds to $w$. In particular, since  $\mathfrak{z}_L$ is a basis of the $G$-module ${\rm d}_{\ZZ[G]}(C_{L,S})$, one computes in this way that $\W^{\Pi,\pi}_{L/K,S}$ is equal to the $G$-module generated by $\Theta_{w}(\mathfrak{z}_L)$. To deduce the claimed result it is therefore enough to show that the equality (\ref{cyclo etnc}) implies 
 $\Theta_{w}(\mathfrak{z}_L)$ is equal to the element $\varepsilon_L := {\rm N}_{\QQ(e^{2\pi i/m})/L}(1-e^{2\pi i/m})^{-1/2}$. 
 
The key point in verifying this is that for each $\chi$ in $\widehat{G}$ the first derivative $L_{S}'(\chi, s)$ of $L_{S}(\chi, s)$ is holomorphic at $s=0$ and such that
 \[ L_{S}'(\chi, 0) = -(1/2){\sum}_{\sigma \in \Gal(\QQ(e^{2\pi i/m})/\QQ)} \chi(\sigma)\log|(1- \sigma(e^{2\pi i/m}))| = {\sum}_{g \in G} \chi(g)\log| g(\varepsilon_L)|,
    \]
where the first equality is proved, for example, in \cite[Chap. 3, \S5]{tate} and the second follows directly from the definition of $\varepsilon_L$. This equality in turn implies that 
\begin{equation}\label{reg interpretation} R_{L,S}(\varepsilon_L) = \bigl({\sum}_{\chi\in \widehat{G}} L'_{S}(\chi^{-1}, 0)e_\chi\bigr)\cdot (w-w_0),\end{equation}
where $w_0$ is any choice of place of $L$ that lies above a prime divisor of $m$. 

To proceed we note next that, by a direct comparison of the definitions of all involved maps, the $e_\pi$ component of the isomorphism $\vartheta_L$ is equal to the composite 
\[ \bigl(\RR \cdot {\rm d}_{\ZZ[G]}(C_{L,S})\bigr) \xrightarrow{\RR\otimes_\QQ\Theta_w} e_\pi\bigl(\RR\cdot\mathcal{O}_{L,S}^\times\bigr) \to \RR[G]e_\pi\]
in which the second map sends each element $u$ to the unique element $c_u$ of $\RR[G]e_\pi$ that is specified by the equality $R_{L,S}(u) = c_u\cdot (w-w_0)$. One therefore has 
\begin{align*} R_{L,S}(\Theta_w(\mathfrak{z}_L)) =&\, e_\pi\vartheta_L(\mathfrak{z}_L)\cdot (w-w_0)\\
                                                 =&\, \bigl({\sum}_{\chi\in \widehat{G}} L^\ast_{S}(\chi^{-1}, 0)e_\chi e_\pi)\cdot (w-w_0)\\
                                                 =&\, \bigl({\sum}_{\chi\in \widehat{G}} L'_{S}(\chi^{-1}, 0)e_\chi\bigr)\cdot (w-w_0)\\
                                                 =&\, R_{L,S}(\varepsilon_L),\end{align*}
Here the second displayed equality follows from 
(\ref{cyclo etnc}), the third from the fact that our choice of $\pi$ combines with the explicit formula for the order of vanishing at $s=0$ of $L_{S}(\chi^{-1},s)$ obtained in \cite[Chap. I, Prop. 3.4]{tate} to 
imply that $L^\ast_{S}(\chi^{-1}, 0)e_\chi e_\pi = L'_{S}(\chi^{-1}, 0)e_\chi$ for all $\chi$ in $\widehat{G}$, and the fourth directly from (\ref{reg interpretation}). In particular, since the map $R_{L,S}$ is injective, this implies the required equality  
$\Theta_w(\mathfrak{z}_L) = \varepsilon_L$. \end{proof} 

\section{Weil-Stark elements over locally-Gorenstein orders} \label{sgp}

In this section we prove that if $\A$ is locally-Gorenstein, then the elements constructed in Theorem \ref{main result} encode detailed information about the structure of Selmer groups. 

We assume that all reduced exterior products are normalised as in \S\ref{We}.  

\subsection{Statement of the main result}\label{Selmermodules} We fix a finite set $T$ of places of $K$ with $S\cap T=\emptyset$ and such that the group $L_T^\times := \{ a\in L^\times : {\rm ord}_w(a-1)>0 \text{ for all } w\in T_L \}$ is torsion-free. We set $\co_{L,S,T}^\times := \mathcal{O}_{L,S}^\times\cap L_T^\times$ and write ${\rm Cl}_S^T(L)$ for the quotient of the group of fractional $\mathcal{O}_L$-ideals
whose support is disjoint from $(S\cup T)_L$ by the subgroup
of principal ideals with a generator congruent to $1$ modulo all
places in $T_{L}$. We recall that ${\rm Cl}_S^T(L)$ is a $G$-module extension of ${\rm Cl}_S(L)$ that also lies in a canonical exact sequence
\begin{equation}\label{selmer lemma II} 0 \longrightarrow {\rm Cl}_{S}^T(L) \longrightarrow
\mathcal{S}_S^T(L)^{\rm tr}
\longrightarrow X_{L,S} \longrightarrow 0,\end{equation} where again $\mathcal{S}_S^T(L)^{\rm tr}$ denotes the transpose Selmer group of $\mathbb{G}_m$ (cf. \cite[\S2]{bks}).
We also set  
\begin{equation*}\label{delta T def} \gamma_{T,\Pi} := \iota_\Pi\bigl({\prod}_{v \in T}{\rm Nrd}_{\QQ[G]}(1-\sigma_{v}^{-1}{\rm N}v)\bigr)\in \zeta(A)^\times,\end{equation*}
where $\sigma_{v}$ is the Frobenius automorphism of $w_v$ in $G$ and ${\rm N}v$ the absolute norm of $v$, and the containment is a  consequence of the commutative diagram (\ref{key commute}). 

We next define a `projector' associated to the bimodule $\Pi$ by setting
\[ {\rm pr}_\Pi := {\sum}_{g \in G}\chi_\Pi(g)\otimes g^{-1}\in A[G],\]
where $\chi_\Pi$ is the $A$-valued character of (the free $A$-module) $\QQ\otimes_\ZZ \Pi$. For an additive map $\epsilon: A\to \QQ$ we write $\epsilon_G$ for the map $A[G] \to \QQ[G]$ sending each ${\sum}_{g \in G}a_gg$ to ${\sum}_{g \in G}\epsilon(a_g)g$.

\begin{example}\label{nobimodulechar}{\em In the setting of Example \ref{exam1}(i), take $\kappa$ to be the identity map $\Lambda[G]\to\Lambda[G]$ and set $\Pi:=\Pi_\kappa=\Lambda[G]$. 
Then $\chi_\Pi(g) = g$ for all $g \in G$ and, for the map $\epsilon:\QQ[G] \to \QQ$ given by projection onto the coefficient of the identity element of $G$, one can check that $\epsilon_G(x\cdot {\rm pr}_\Pi) = x$ for all $x\in \QQ[G]$. 
%
}\end{example}

In the following result we use the $\zeta(\A)$-ideal $\delta(\A)$ (cf. \S\ref{Whitehead section}) and the notions of `locally-quadratic presentation' and `non-commutative Fitting invariant' (cf. \S\ref{fi review}). In addition, for a $\xi(\mathcal{A})$-module $X$, in claim (ii) we write $X^{e_\pi}$ for the submodule $\{x \in X: e_\pi \cdot x = x \text{ in } \mathcal{F}\otimes_\Lambda X\}$.

\begin{theorem}\label{main result2} Fix data as in Theorem \ref{main result} and an auxiliary set $T$ as above. Assume $\A$ is locally-Gorenstein and the bimodule $\Pi$ satisfies Hypothesis \ref{Pi hyp}. Set $\W_T:=\gamma_{T,\Pi}\cdot(\W_{L/K,S}^{\Pi,\pi})$. 
\begin{itemize}
\item[(i)] $\W_T$ is contained in ${{{\bigcap}}}^{r}_{\A}{^{\Pi}}\co^\times_{L,S,T}$.
\item[(ii)] There exists a locally-quadratic presentation $h$ of the $\mathcal{A}$-module ${_\Pi}\mathcal{S}_S^T(L)^{\rm tr}$ such that  
\begin{equation*}\label{PiFit}\{(\wedge_{i=1}^{i=r}\varphi_i)(\varepsilon)\!:\!\, \varphi_i \in \Hom_{\mathcal{A}}(^{\Pi}\mathcal{O}_{L,S,T}^{\times},\mathcal{A}),\,\varepsilon\in\W_T\} = {\rm Fit}_{\mathcal{A}}^r\bigl(h\bigr).\end{equation*}
If $\xi(\mathcal{A})$ is locally-Gorenstein, there exists a perfect bilinear pairing of $\xi(\mathcal{A})$-modules 
\[ \bigl(({{\bigcap}}^{r}_{\A}{^{\Pi}}\co^\times_{L,S,T})^{e_\pi}/\mathcal{W}_T\bigr) \times  
\bigl(\xi(\mathcal{A})^{e_\pi}/{\rm Fit}_{\mathcal{A}}^r(h)\bigr)\to \zeta(A)/\xi(\mathcal{A}).\]
\item[(iii)] Fix $S'\subseteq S$ with $S^\infty_K\subseteq S'$ and so the natural composite map ${_\Pi}X_{L,S'} \to {_\Pi}X_{L,S}  \xrightarrow{\pi} Y_\pi$ has finite cokernel. Then for every $a\in \delta(\A)\cdot{\rm Ann}_{\A}({\rm Tor}^{G}_1(\check\Pi,\Hom_\ZZ(\mathcal{O}_{L,S',T}^\times,\ZZ)))$, $(\varphi_i)_{1\leq i\leq r}\in \Hom_{\mathcal{A}}(^{\Pi}\mathcal{O}_{L,S,T}^\times,\mathcal{A})^{r}$, $\varepsilon\in \W_T$ and additive map $\epsilon:\mathcal{A} \to \ZZ$, the element 
 \[ \epsilon_G\bigl(\iota_\A\bigl(a\cdot(\wedge_{i=1}^{i=r}\varphi_i)(\varepsilon)\bigr)\cdot {\rm pr}_{\check\Pi}\bigr)\]
 belongs to $\ZZ[G]$ and annihilates ${\rm Cl}^T_{S'}(L)$.
\end{itemize}
\end{theorem}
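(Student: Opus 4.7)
The plan is to exploit the definition of $\W_{L/K,S}^{\Pi,\pi}$ via the reduced determinant of ${_\Pi}C$ (see (\ref{Thetap}) and Lemma \ref{Picoh}) together with the standard $T$-modification of Weil-\'etale cohomology used in \cite{bks}. For claim (i), I would introduce a complex $C_{L,S,T}$ in $D^{{\rm lf}}(\ZZ[G])$ whose cohomology computes $\mathcal{O}_{L,S,T}^\times$ in degree zero and $\mathcal{S}_S^T(L)^{{\rm tr}}$ in degree one, fitting into an exact triangle relating it to $C_{L,S}$ via the Euler-type term ${\bigoplus}_{v\in T}\ZZ[G]\otimes_{\ZZ[G_v]}\ZZ$. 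Applying $\Pi\otimes_{\ZZ[G]}^{\mathbb{L}}-$ and analysing reduced determinants through the diagram (\ref{key commute}), the scalar factor contributed by the cone term computes to $\gamma_{T,\Pi}$ via the characteristic polynomial identity ${\rm Nrd}_{\QQ[G]}(1-\sigma_v^{-1}{\rm N}v)$ defining it. Hence $\W_T$ equals the image of $\D_{\A}({_\Pi}C_{L,S,T})$ under the $T$-modified analogue of $\Theta_{\underline{b}_{\pi,\frp}}$ and therefore lies in ${{{\bigcap}}}_{\A}^{r}{^{\Pi}}\mathcal{O}_{L,S,T}^\times$.

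For claim (ii), Lemma \ref{useful props remark} (available as $\A$ is locally-Gorenstein) combined with the vanishing of the reduced Euler characteristic of $e_\pi\cdot{_\Pi}C_{L,S,T}$ allows me to represent this complex, locally at each $\frp$, by a two-term complex $P'\xrightarrow{\theta}P$ of finitely generated locally-free $\A$-modules of the same rank; the cokernel of $\theta$ is ${_\Pi}\mathcal{S}_S^T(L)^{{\rm tr}}$ and this gives the required locally-quadratic presentation $h$. Unravelling the passage-to-cohomology isomorphism (\ref{ptc iso}) and the construction of $\Theta_{\underline{b}_{\pi,\frp}}$ would identify $\{(\wedge_{i=1}^{i=r}\varphi_i)(\varepsilon)\}$ with ${\rm Fit}_{\A}^{r}(h)$ via the definition of non-commutative Fitting invariants in \S\ref{fi review} applied to the matrix of $\theta$. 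The perfect pairing under the additional assumption that $\xi(\A)$ is locally-Gorenstein would be obtained by dualising $h$ via $\iota_\A$: the dual two-term complex $\Hom_\A(P,\A)\to\Hom_\A(P',\A)$ is also a locally-quadratic presentation (by Lemma \ref{useful props remark}(i)), its cohomology is related to the torsion part of ${^{\Pi}}\mathcal{O}_{L,S,T}^\times$ through Lemma \ref{useful props remark}(iv), and the duality $\Hom_{\xi(\A)}(-,\zeta(A)/\xi(\A))$ coming from the Gorenstein property of $\xi(\A)$ applied to both sides then yields the desired perfect pairing.

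For claim (iii), the key input from (ii) is that each $(\wedge_{i=1}^{i=r}\varphi_i)(\varepsilon)$ lies in ${\rm Fit}_{\A}^{r}(h)$, and hence after multiplication by any element of $\delta(\A)$ lies in $\delta(\A)\subset\zeta(\A)$ and annihilates ${_\Pi}\mathcal{S}_S^T(L)^{{\rm tr}}$ (combining the standard annihilation property of Fitting invariants of locally-quadratic presentations with the identity $\delta(\A)\cdot\xi(\A)=\delta(\A)$ from \S\ref{Whitehead section}). To replace $S$ by $S'\subseteq S$, I would compare ${_\Pi}\mathcal{S}_{S'}^T(L)^{{\rm tr}}$ with ${_\Pi}\mathcal{S}_S^T(L)^{{\rm tr}}$ by reapplying the argument of Lemma \ref{Picoh} to the $(S',T)$-analogue of $C_{L,S,T}$: the associated spectral sequence combined with a duality that invokes ($\Pi_3$) (to replace $\Pi$ by $\check\Pi$ on the relevant side) identifies the obstruction with ${\rm Tor}^G_1(\check\Pi,\Hom_\ZZ(\mathcal{O}_{L,S',T}^\times,\ZZ))$, which accounts for the extra annihilator factor appearing in the statement. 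Finally, the passage from an $\A$-annihilator of ${_\Pi}\mathcal{S}_{S'}^T(L)^{{\rm tr}}=\Pi\otimes_{\ZZ[G]}\mathcal{S}_{S'}^T(L)^{{\rm tr}}$ to a $\ZZ[G]$-annihilator of $\mathcal{S}_{S'}^T(L)^{{\rm tr}}$ (and hence, via (\ref{selmer lemma II}), of ${\rm Cl}_{S'}^T(L)$) is effected by the projector ${\rm pr}_{\check\Pi}$: the character-theoretic formula underlying (\ref{key commute}) combined with $\iota_\A$ produces a Galois-equivariant trace, and then $\epsilon_G$ extracts a $\ZZ[G]$-element which, by construction, acts trivially on $\mathcal{S}_{S'}^T(L)^{{\rm tr}}$. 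The main obstacle I expect is verifying that this final element genuinely lies in $\ZZ[G]$ rather than merely $\QQ[G]$: this integrality requires a delicate simultaneous interplay of the factors $\delta(\A)$, the anti-involution $\iota_\A$, and the locally-free $\A$-module structure on $\check\Pi$ guaranteed by ($\Pi_3$).
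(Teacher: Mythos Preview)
Your outline for claim (i) and the first assertion of claim (ii) is essentially correct and matches the paper's approach: the $T$-modification via the exact triangle, the computation of $\gamma_{T,\Pi}$ as the determinant of the cone, and the identification of $\{(\wedge_{i=1}^{i=r}\varphi_i)(\varepsilon)\}$ with ${\rm Fit}_\A^r(h)$ via a two-term locally-free representative of ${_\Pi}C_{L,S,T}$ are exactly what the paper does (see Proposition \ref{yoneda} and the matrix computation (\ref{almost2})). Your description of the perfect pairing is vaguer than the paper's argument, which does not dualise $h$ but rather identifies ${\bigwedge}^r_{\A^{\rm op}}\Hom_\A({^\Pi}U,\A)$ with the $\xi(\A)$-linear dual of the Rubin lattice and then uses that $\W_{T,\frp}$ is free of rank one over $\xi(\A_{(\frp)})e_\pi$; but this is a secondary point.

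The substantive gap is in claim (iii). Your proposed ``key input'' --- that $\delta(\A)\cdot{\rm Fit}_\A^r(h)$ annihilates ${_\Pi}\mathcal{S}_S^T(L)^{\rm tr}$ --- is false: only the \emph{zeroth} Fitting invariant of a locally-quadratic presentation has the annihilation property (cf.\ \cite[Th.~3.20(iii)]{bses}). The $r$-th Fitting invariant typically equals $\xi(\A)e_\pi$ on the component where the module has projective rank $r$, and annihilates nothing. The paper therefore does \emph{not} deduce (iii) from (ii). Instead, it passes to the linear dual complex $({_\Pi}C_{L,S,T})^* = R\Hom_\A({_\Pi}C_{L,S,T},\A)[-1]$, whose $H^1$ is identified (Lemma \ref{dual complex lemma}) with ${_{\check\Pi}}\mathcal{S}_S^T(L)$ --- note the appearance of $\check\Pi$ and of the Selmer group rather than its transpose. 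For each fixed $\Phi = \wedge_{i=1}^{i=r}\varphi_i$ one lifts the $\varphi_i$ to elements $\widetilde\varphi_i \in {_{\check\Pi}}\mathcal{S}_S^T(L)$, lets $\mathcal{E}_\Phi$ be the submodule they generate, and then constructs (Proposition \ref{ltc prop}) a quadratic presentation $h_\Phi$ of the \emph{quotient} $({_{\check\Pi}}\mathcal{S}_S^T(L)/\mathcal{E}_\Phi)_{(\frp)}$ whose \emph{zeroth} Fitting invariant contains $\Phi(\varepsilon_{\hat{\underline{b}}})$. Only then does $\delta(\A)\cdot\Phi(\varepsilon)$ annihilate this quotient, and the exact sequence (\ref{selmer lemma I}) (not (\ref{selmer lemma II})) together with the map $f_{S,S'}$ carries this down to ${_{\check\Pi}}({\rm Cl}_{S'}^T(L)^\vee)$; the ${\rm Tor}_1^G$ term arises precisely as the obstruction in that step.

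There is a further missing ingredient: the construction of $h_\Phi$ requires that the images $\rho_{S,S'}(\varphi_i)$ span a free $A$-module of rank $r$, which is not true for arbitrary $\varphi_i$. The paper handles this by an approximation argument (Lemmas \ref{tech21} and \ref{tech2}): one perturbs the $\varphi_i$ by multiples of a fixed free basis so that the perturbed maps have the required independence property, and controls the resulting change in $(\wedge_{i=1}^{i=r}\varphi_i)(\varepsilon)$ modulo $|{\rm Cl}_{S'}^T(L)|\cdot\A$. Without this reduction the quotient ${_{\check\Pi}}\mathcal{S}_S^T(L)/\mathcal{E}_\Phi$ need not admit a quadratic presentation with the desired property.
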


\begin{remark}{\em 
The proof of Theorem \ref{main result2}(i) will also show that $\W^{\Pi,\pi}_{L/K,S}\subseteq ({{{\bigcap}}}^{r}_{\A}{^{\Pi}}\co^\times_{L,S})_{(\frp)}$ for all $\p \in {\rm Spec}(\Lambda)$ for which  $(^{{\Pi}}\co^\times_{L,S})_{(\frp)}$ is torsion-free.}\end{remark}

%
%

%

\begin{remark}\label{refine rubin}{\em In the setting of Example \ref{nobimodulechar}, 
Theorem \ref{main result2}(iii) implies that any element of $\delta(\ZZ[G])\cdot\{(\wedge_{i=1}^{i=r}\varphi_i)(\varepsilon):\varphi_i\in\Hom_G(\co_{L,S,T}^\times,\ZZ[G]),\varepsilon\in\W_T\}$ belongs to $\ZZ[G]$ and annihilates ${\rm Cl}^T_{S'}(L)$. Here we have set $S':=S_K^\infty\cup V\cup\{v_0\}$, with $V$ the subset of $S$ comprising places that split completely in $L/K$ and $v_0$ any place in $S\setminus V$. In particular, if $K=\QQ$ and $L$ is real abelian, this fact combines with Theorem \ref{cyclo-weil thm} to recover the main result of Rubin \cite{rubininv}. 
}\end{remark} 

\begin{remark}\label{unr rem}{\em If $\A$ is not locally-Gorenstein, then our methods still determine properties of Weil-Stark elements (albeit less precisely than Theorem \ref{main result2}). Understanding such properties is of interest since, given $\Pi$, one can sometimes increase the maximal permissible rank $r^\Pi_{L,S}$ in Remark \ref{cases of r}, and thereby extend the range of Theorem \ref{main result}, by considering bimodules associated to ring extensions of $\A$ that are not locally-Gorenstein (as, for example, in Remark \ref{unramified extensions}). For an example of such properties see Remark \ref{A2remark} below. }\end{remark}

\subsection{Reduction steps}

\subsubsection{}\label{cohdescription} We first discuss a useful variant of the complex $C_{L,S}$ and of the module of Weil-Stark elements. 
To do this we note that each place $v$ in our fixed set $T$ is unramified in $L$, and hence that there exists an exact sequence of $G$-modules of the form 
\begin{equation}\label{resolve kappa} 0 \to \ZZ[G] \xrightarrow{x\to x(1-\sigma_v^{-1}{\rm N}v)} \ZZ[G] \to {\bigoplus}_{w \in \{v\}_L}\kappa^\times_w\to 0,\end{equation}
where $\kappa_w$ denotes the residue field of $w$. This resolution implies that the complex $\bigl({\bigoplus}_{w \mid v}\kappa^\times_w\bigr)[0]$ defines an object $D^{\rm lf}(\ZZ[G])$, so by Lemma \ref{Picoh} we can fix an exact triangle in $D^{\rm lf}(\ZZ[G])$ 
\begin{equation}\label{Ttriangle}C_{L,S,T}\longrightarrow C_{L,S}\longrightarrow\bigl({\bigoplus}_{w\in T_L}\kappa_w^\times\bigr)[0]\longrightarrow,\end{equation}
in which the second arrow is the canonical morphism constructed in \cite[\S2.2]{bks}. The complex $C_{L,S,T}$ is then defined up to canonical isomorphism (in $D^{\rm lf}(\ZZ[G])$), is acyclic outside degrees zero and one, and is such that there are identifications $H^0(C_{L,S,T}) = \mathcal{O}_{L,S,T}^\times$ and $H^1(C_{L,S,T})= \mathcal{S}_S^T(L)^{{\rm tr}}$. 
In particular, the complex 
 \[ {_\Pi}C_T:=\Pi\otimes_{\ZZ[G]}^{\mathbb{L}}C_{L,S,T}\]
 lies in an exact triangle 
\begin{equation*}\label{Ttriangle2} {_\Pi}C_T \longrightarrow {_\Pi}\,C_{L,S} \longrightarrow {_\Pi}\bigl({\bigoplus}_{w\in T_L}\kappa_w^\times\bigr)[0]\longrightarrow,\end{equation*}
in $D^{\rm lf}(\A)$ induced by (\ref{Ttriangle}) and the same argument as in Lemma \ref{Picoh} implies ${_\Pi}C_T$ is acyclic outside degrees zero and one and gives identifications $H^0({_\Pi}C_T) = {^\Pi}\mathcal{O}_{L,S,T}^\times$ and $H^1({_\Pi}C_T)= {_\Pi}\mathcal{S}_S^T(L)^{{\rm tr}}$. By applying the functor ${\rm d}_\A(-)$ to the latter exact triangle one finds that 
\begin{equation}\label{dT} \D_{\A}({_\Pi}C_T) = \D_\A\bigl({_\Pi}\bigl({\bigoplus}_{w\in T_L}\kappa_w^\times\bigr)[-1]\bigr)\cdot \D_\A( {_\Pi}C_{L,S}) = \gamma_{T,\Pi}\cdot \D_\A({_\Pi}C_{L,S}),\end{equation}
where the second equality follows from the resolutions (\ref{resolve kappa}) and the commutativity of (\ref{key commute}). 

For $\frp\in {\rm Spec}(\Lambda)$ we fix an $\A_{(\frp)}$-basis $\underline{b}_{\pi,\frp}$ of $Y_{\pi,(\frp)}$. Then, setting 
\begin{equation}\label{WT} \W_{T,\frp}:=\Theta_{\underline{b}_{\pi,\frp}}\bigl(\D_{\A_{(\frp)}}\bigl(({_\Pi}C_{T})_{(\frp)}\bigr)\bigr)\quad\text{and}\quad \W_T={\bigcap}_{\frp \in {\rm Spec}(\Lambda)}\W_{T,\frp},\end{equation}
the equality (\ref{dT}) implies that $\W_T= \gamma_{T,\Pi}\cdot\bigl(\W_{L/K,S}^{\Pi,\pi}\bigr)$ (so the notation $\W_T$ is consistent with that of Theorem \ref{main result2}). From Theorem \ref{main result} and Lemma \ref{CRlemma}, we can therefore deduce that $\W_T$ is a locally-cyclic $\xi(\A)$-module and that $(\W_T)_{(\frp)}=\W_{T,\frp}$ for all $\frp\in {\rm Spec}(\Lambda)$.

\subsubsection{}\label{334} 
%

%
 


The key to our proof of Theorem \ref{main result2} is to show ${_\Pi}C_T$ is represented by a complex that is well-suited to the computation of $\W_T$. For this, we use the surjective map of $\A$-modules 
\[  \pi_T:{_\Pi}\mathcal{S}_S^T(L)^{\rm tr}\to{_\Pi}X_{L,S}\xrightarrow{\pi}Y_\pi,\]
where the first arrow is induced by the exact sequence (\ref{selmer lemma II}).
%
We also recall from Remark \ref{cases of r} that the rank $r:={\rm rk}_{\A}(Y_\pi)$ satisfies $r \le |S|$. 

\begin{proposition}\label{yoneda} There exists an isomorphism $\vartheta: P^\bullet \to {_\Pi}C_T$ in $D^{\rm lf}(\A)$ with all of the following properties.
\begin{itemize}\item[(i)] $P^\bullet$ has the form $P^0\xrightarrow{\phi} P$, where $P^0$, resp. $P$, is a locally-free $\A$-module (placed in degree zero), resp. a free $\A$-module, of finite rank. In addition, $d:={\rm rk}_{\A}(P)={\rm rk}_{\A}(P^0)> |S|.$ 
\item[(ii)] The isomorphism $\vartheta$ induces an exact sequence of $\A$-modules
$$0\to{^\Pi}\co^\times_{L,S,T}\stackrel{\iota}{\longrightarrow}P^0\stackrel{\phi}{\longrightarrow}P\stackrel{\varpi}{\longrightarrow}{_\Pi}\mathcal{S}_S^T(L)^{\rm tr}\to 0.$$
\item[(iii)] Fix $\frp\in {\rm Spec}(\Lambda)$ and a basis $\underline{b}_{\pi,\frp} = \{b_{\frp,i}\}_{i \in [r]}$ of the  $\mathcal{A}_{(\frp)}$-module $Y_{\pi,{(\frp)}}$.
Then there is 
an $\mathcal{A}_{(\frp)}$-basis $\hat{\underline{b}} = \{\hat b_{i}\}_{i \in [d]}$ of $P_{(\frp)}$ with
\begin{equation*}\label{basis choice} (\pi_T\circ\varpi)_{(\frp)}(\hat b_{i}) = \begin{cases} b_{\frp,i}, &\text{if $i\in [r]$}\\
 0, &\text{if $i \in [d]\setminus [r]$.}
 \end{cases}\end{equation*}
In particular, if $\{\hat b_i^{\ast}\}_{i \in [d]}$ is the dual basis of $\hat{\underline{b}}$ in $\Hom_{\A_{(\frp)}}(P_{(\frp)},\A_{(\frp)})$, then for each $i \in [r]$, one has $\hat b_i^{\ast}\circ \phi_{(\frp)} = 0$. 
\item[(iv)] Assume $\A$ is locally-Gorenstein. Fix $\frp\in {\rm Spec}(\Lambda)$, an isomorphism $\psi:P^0_{(\frp)}\cong P_{(\frp)}$ of $\mathcal{A}_{(\frp)}$-modules and an $\mathcal{A}_{(\frp)}$-basis $\hat{\underline{b}}$ of $P_{(\frp)}$ as in (iii). Then the element
\[\varepsilon_{\hat{\underline{b}}}:=  \left({\wedge}_{i=r+1}^{i=d}\calF\cdot(\hat b_i^{\ast}\circ\phi_{(\frp)}\circ\psi^{-1})\right)({\wedge}_{c=1}^{c=d}\hat b_{c})\in {{\bigwedge}}_{A}^r(\calF \cdot P)\]
belongs to ${{{\bigcap}}}^r_{\A_{(\frp)}}({^\Pi}\co^\times_{L,S,T})_{(\frp)}$ and generates the $\xi(\A_{(\frp)})$-module $\W_{T,\frp}$, both viewed as sublattices of ${{\bigwedge}}_{A}^r(\calF \cdot P)$ via $\bigwedge_A^r(\calF\cdot(\psi\circ\iota_{(\frp)}))$.
\end{itemize}
\end{proposition}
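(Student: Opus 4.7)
The plan is to build $P^\bullet$ in stages, first obtaining a two-term locally-free representative of ${_\Pi}C_T$, and then refining bases after localisation at each prime so that $\pi_T\circ\varpi$ acquires the prescribed form.

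Since ${_\Pi}C_T$ lies in $D^{\rm lf}(\A)$ and is acyclic outside degrees $0$ and $1$ (cf.\ \S\ref{cohdescription}), standard truncation produces a two-term representative $Q^0\to Q^1$ by finitely generated locally-free $\A$-modules in degrees $0$ and $1$. Direct-summing with trivial complexes $\A^n\xrightarrow{\id}\A^n$ then lets me replace the degree-one term by a free module $P$, equalise the ranks to a common value $d := {\rm rk}_\A(P)={\rm rk}_\A(P^0)$, and -- using $r\le |S|$ from Remark \ref{cases of r} -- arrange $d > |S|$. This yields (i), and (ii) is just the long cohomology sequence of $P^\bullet$ combined with the identifications $H^0({_\Pi}C_T) = {^\Pi}\co_{L,S,T}^\times$ and $H^1({_\Pi}C_T) = {_\Pi}\mathcal{S}^T_S(L)^{\rm tr}$ recorded in \S\ref{cohdescription}.

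For (iii), the composite surjection $(\pi_T\circ\varpi)_{(\frp)}: P_{(\frp)}\twoheadrightarrow Y_{\pi,(\frp)}$ splits because $Y_{\pi,(\frp)}$ is locally-free of rank $r$, so its kernel $K$ is a direct summand of $P_{(\frp)}\cong\A_{(\frp)}^d$ satisfying $K\oplus\A_{(\frp)}^r\cong\A_{(\frp)}^d$, and hence is stably free of rank $d-r$. By enlarging $d$ at the outset (via further $\A\xrightarrow{\id}\A$ summands of $P^\bullet$), one can force $d-r$ to exceed the stable range of the semilocal order $\A_{(\frp)}$, so that Bass cancellation (cf.\ \cite{curtisr}) forces $K\cong\A_{(\frp)}^{d-r}$. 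Lifting $\underline{b}_{\pi,\frp}$ to elements $\hat b_1,\ldots,\hat b_r$ of $P_{(\frp)}$ and extending by an $\A_{(\frp)}$-basis of $K$ then gives the required basis $\hat{\underline{b}}$; the identity $\hat b_i^\ast\circ\phi_{(\frp)}=0$ for $i\in[r]$ is immediate from ${\rm im}(\phi_{(\frp)})\subseteq\ker(\varpi_{(\frp)})\subseteq\ker((\pi_T\circ\varpi)_{(\frp)}) = \bigoplus_{j>r}\A_{(\frp)}\hat b_j$.

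For (iv), the locally-Gorenstein hypothesis guarantees that $P^0$ is locally-free, hence $P^0_{(\frp)}$ is free of rank $d$ and an isomorphism $\psi: P^0_{(\frp)}\xrightarrow{\sim}P_{(\frp)}$ exists. With respect to the bases $\psi^{-1}(\hat{\underline{b}})$ of $P^0_{(\frp)}$ and $\hat{\underline{b}}$ of $P_{(\frp)}$, the reduced determinant $\D_{\A_{(\frp)}}(P^\bullet_{(\frp)})$ acquires a canonical $\xi(\A_{(\frp)})$-basis; tracking this basis through the passage-to-cohomology isomorphism (\ref{ptc iso}) and the defining diagram of $\Theta_{\underline{b}_{\pi,\frp}}$ in \S\ref{epi}, and invoking the contraction pairing of \S\ref{Rubin section}, I expect the resulting image in $e_\pi\bigl(\calF\cdot{{\bigcap}}^r_\A{^\Pi}\co^\times_{L,S,T}\bigr)$ to agree with $\varepsilon_{\hat{\underline{b}}}$. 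The membership $\varepsilon_{\hat{\underline{b}}}\in{{\bigcap}}^r_{\A_{(\frp)}}({^\Pi}\co^\times_{L,S,T})_{(\frp)}$ and the equality $\xi(\A_{(\frp)})\cdot\varepsilon_{\hat{\underline{b}}}=\W_{T,\frp}$ then follow at once from the definition of $\W_{T,\frp}$ in (\ref{WT}) together with \cite[Lem. 4.13]{bses}. The hardest step will be this explicit matching: verifying that the abstract determinant generator, transported via (\ref{ptc iso}) and the splittings determined by $\psi$ and $\hat{\underline{b}}$, yields precisely the contraction formula defining $\varepsilon_{\hat{\underline{b}}}$. The crucial ingredient is the vanishing $\hat b_i^\ast\circ\phi_{(\frp)}=0$ for $i\in[r]$ from (iii), which ensures that the wedges $\wedge_{i=r+1}^{d}(\hat b_i^\ast\circ\phi_{(\frp)}\circ\psi^{-1})$ genuinely define dual elements whose contraction with $\wedge_{c=1}^{d}\hat b_c$ lands in the correct reduced exterior power.
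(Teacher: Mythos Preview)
Your outline for (i)--(iii) is broadly in the right spirit, though the paper takes a different route for (i) and (ii): it builds the two-term representative at the $\ZZ[G]$-level (for $C_{L,S,T}$) and then applies $\Pi\otimes^{\mathbb{L}}_{\ZZ[G]}-$, obtaining the exact sequence of (ii) via Lemma~\ref{trace sequences}. Your direct construction at the $\A$-level can be made to work, but note that ``equalising the ranks'' is not achieved by adding copies of $\A\xrightarrow{\id}\A$: that operation preserves the difference of ranks. The equality ${\rm rk}_\A(P^0)={\rm rk}_\A(P)$ is forced by the arithmetic fact that $\calF\cdot{^\Pi}\co^\times_{L,S,T}$ and $\calF\cdot{_\Pi}\mathcal{S}^T_S(L)^{\rm tr}$ are isomorphic $A$-modules (ultimately the regulator and Noether--Deuring), which you should make explicit.

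The substantive gap is in (iv). You write that ``the locally-Gorenstein hypothesis guarantees that $P^0$ is locally-free'', but $P^0$ is locally-free by construction---this is claim (i), and holds with no Gorenstein assumption. More seriously, you assert that the membership $\varepsilon_{\hat{\underline{b}}}\in{\bigcap}^r_{\A_{(\frp)}}({^\Pi}\co^\times_{L,S,T})_{(\frp)}$ follows ``at once from the definition of $\W_{T,\frp}$''. It does not: by construction $\W_{T,\frp}$ is only known to lie in the \emph{rational} space $e_\pi\bigl(\calF\cdot{\bigcap}^r_\A{^\Pi}\co^\times_{L,S,T}\bigr)$, and Remark~\ref{A2remark} explicitly records that without the Gorenstein hypothesis the integral containment can fail while the equality $\xi(\A_{(\frp)})\varepsilon_{\hat{\underline{b}}}=\W_{T,\frp}$ still holds. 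The actual role of the locally-Gorenstein assumption is to force ${\rm Ext}^1_{\A_{(\frp)}}(\cok(\iota_{(\frp)}),\A_{(\frp)})=0$ (Lemma~\ref{useful props remark}(iv)), which via \cite[Th.~4.19(iv)]{bses} yields the identification
\[
{\bigcap}^r_{\A_{(\frp)}}({^\Pi}\co^\times_{L,S,T})_{(\frp)} \;=\; {\bigwedge}^r_A(\calF\cdot{^\Pi}\mathcal{O}^\times_{L,S})\,\cap\,{\bigcap}^r_{\A_{(\frp)}}P_{(\frp)}.
\]
The paper then checks the two containments on the right separately: $\varepsilon_{\hat{\underline{b}}}\in{\bigcap}^r_{\A_{(\frp)}}P_{(\frp)}$ via \cite[Lem.~4.10, Th.~4.19(v)]{bses}, and $\varepsilon_{\hat{\underline{b}}}\in e_\pi{\bigwedge}^r_A(\calF\cdot{^\Pi}\mathcal{O}^\times_{L,S})$ via a dimension count (showing $e'(\varepsilon_{\hat{\underline{b}}})=0$ for each primitive idempotent $e'$ orthogonal to $e_\pi$) together with \cite[Lem.~4.14]{bses}. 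None of this appears in your sketch.

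For the ``hardest step'' (the equality $\varepsilon_{\hat{\underline{b}}}=\Theta_{\underline{b}_{\pi,\frp}}(\beta_{\hat{\underline{b}}})$) the paper does not rely on abstract determinant-tracking. Instead, since both sides lie in ${\bigwedge}^r_{Ae_\pi}(e_\pi(\calF\cdot{^\Pi}\mathcal{O}^\times_{L,S}))$ with $e_\pi(\calF\cdot{^\Pi}\mathcal{O}^\times_{L,S})$ free of rank $r$ over $Ae_\pi$, it suffices to check agreement after applying each $\wedge_{i=1}^{r}\varphi_i$; the paper then evaluates both sides as the reduced norm of a single explicit matrix $N(\phi,\{\hat\varphi_i\},\hat{\underline{b}})$ (equations (\ref{matrixN}) and (\ref{almost2})). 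This computation again uses the Ext-vanishing, to lift each $\varphi_i\in({^\Pi}\co^\times_{L,S,T})^*_{(\frp)}$ to $\hat\varphi_i\in P^*_{(\frp)}$, so the Gorenstein hypothesis enters a second time here.
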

\begin{proof} A standard construction in homological algebra (as in \cite[Rapport, Lem. 4.7]{del}) implies the existence of an isomorphism $\tilde\vartheta: \tilde P^\bullet \to C_T$ in $D^{\rm lf}(\ZZ[G])$, with $\tilde P^\bullet$ of the form $\tilde P^0\xrightarrow{\tilde\phi}\tilde P$ where $\tilde P^0$ is a  finitely generated $\ZZ[G]$-module of projective dimension at most one and $\tilde P$ a free $\ZZ[G]$-module of finite rank $d>|S|$. 
The associated exact sequence 
\begin{equation}\label{beforebasechange}0\to\co^\times_{L,S,T}\stackrel{\tilde\iota}{\longrightarrow}\tilde P^0\stackrel{\tilde\phi}{\longrightarrow}\tilde P\stackrel{\tilde\varpi}{\longrightarrow}\mathcal{S}_S^T(L)^{\rm tr}\to 0.\end{equation}
then combines with the fact $\co^\times_{L,S,T}$ is $\ZZ$-torsion-free to imply $\tilde P^0$ is also $\ZZ$-torsion-free. Hence, since any $G$-module of projective dimension at most one is cohomologically-trivial, $\tilde P^0$ is a projective $\ZZ[G]$-projective (by \cite[Th. 8]{aw}) and thus, by Swan's Theorem \cite[Th. (32.11), Vol. I]{curtisr}, locally-free. In addition, the exactness of (\ref{selmer lemma II}) (and the argument of Lemma \ref{fe}(i)) implies the $\QQ[G]$-modules $\QQ\cdot\co^\times_{L,S,T}$ and $\QQ\cdot\mathcal{S}_S^T(L)^{\rm tr}$ are isomorphic. From the exactness of (\ref{beforebasechange}), it then follows that $d:={\rm rk}_{\ZZ[G]}(\tilde P)={\rm rk}_{\ZZ[G]}(\tilde P^0)$.


The base change $\vartheta:={_\Pi}\tilde\vartheta=\Pi\otimes_{\ZZ[G]}^{\mathbb{L}}\vartheta$ of $\tilde\vartheta$ gives an isomorphism from $P^\bullet:={_\Pi}\tilde P^\bullet$ to ${_\Pi}C_T$ in $D^{\rm lf}(\A)$ and, by Lemma \ref{trace sequences} applied to $\tilde\phi$, induces the exact sequence claimed in (ii) with 
$P^0:={_\Pi}\tilde P^0$, $P:={_\Pi}\tilde P$, $\phi:={_\Pi}\tilde\phi$, $\varpi:={_\Pi}\tilde\varpi$ and $\iota:=T_{\Pi,\tilde P^0}^{-1}\circ{^\Pi}\tilde\iota$. The choice of $\tilde P^\bullet$ also readily implies that $P^\bullet=[P^0\xrightarrow{\phi} P]$ satisfies claim (i).

To proceed we fix $\frp\in {\rm Spec}(\Lambda)$ and a basis $\underline{b}_{\pi,\frp} = \{b_{\frp,i}\}_{i\in [r]}$ of the $\mathcal{A}_{(\frp)}$-module $Y_{\pi,{(\frp)}}$. Since the composite map of $\mathcal{A}_{(\frp)}$-modules
$(\pi_T\circ\varpi)_{(\frp)}$,
is surjective, we can choose an $\mathcal{A}_{(\frp)}$-basis $\hat{\underline{b}}$ of $P_{(\frp)}$ with the displayed property in claim (iii). Then, as the elements $\{b_{\frp,i}\}_{i\in [r]}$ are linearly independent over $\mathcal{A}_{(\frp)}$, for each subset $\{x_i\}_{i\in [d]} \subset \mathcal{A}_{(\frp)}$, one has $x_i = 0$ for all $i \in [r]$ whenever $\sum_{i=1}^{i=d}x_i\hat b_i \in \ker(\pi_T\circ\varpi)$. In particular, since $\im(\phi) \subseteq \ker(\pi_T\circ\varpi)$, this implies the final assertion of claim (iii). 

To prove claim (iv) we assume, without loss of generality, that $P^0_{(\frp)}=P_{(\frp)}$ and that $\psi$ is the identity map. We then set $\phi_i := \hat b_i^{\ast}\circ\phi_{(\frp)}\in\Hom_{\A_{(\frp)}}(P_{(\frp)},\A_{(\frp)})$ for each index $i$, so that $\varepsilon_{\hat{\underline{b}}}=({\wedge}_{i=r+1}^{i=d}\phi_i)({\wedge}_{c=1}^{c=d}\hat b_{c})$. Then  \cite[Lem. 4.10]{bses} implies that the element ${\wedge}_{c=1}^{c=d}\hat b_{c}$ belongs to ${{{\bigcap}}}_{\A_{(\frp)}}^d P_{(\frp)}$ and thus \cite[Th. 4.19(v)]{bses} in turn implies that $\varepsilon_{\hat{\underline{b}}}$ belongs to ${{{\bigcap}}}_{\A_{(\frp)}}^r P_{(\frp)}$.

We also observe that, as the cokernel of the map $\iota_{(\frp)}$ in the exact sequence of Proposition \ref{yoneda}(ii) is $\A_{(\frp)}$-torsion-free, Lemma \ref{useful props remark}(iv) implies ${\rm Ext}^1_{\A_{(\frp)}}(\cok(\iota_{(\frp)}),\A_{(\frp)})$ vanishes. Then  \cite[Th. 4.19 (iv)]{bses} implies 
\begin{equation}\label{bigcapbigwedge}{{{\bigcap}}}^r_{\A_{(\frp)}}({^\Pi}\co^\times_{L,S,T})_{(\frp)}={\bigwedge}^r_{A}(\calF\cdot {^\Pi}\mathcal{O}_{L,S}^\times)\cap {{{\bigcap}}}_{\A_{(\frp)}}^r P_{(\frp)}\end{equation}
and so $\varepsilon_{\hat{\underline{b}}}\in {{{\bigcap}}}^r_{\A_{(\frp)}}({^\Pi}\co^\times_{L,S,T})_{(\frp)}$ if $\varepsilon_{\hat{\underline{b}}}\in {\bigwedge}^r_{A}(\calF\cdot {^\Pi}\mathcal{O}_{L,S}^\times)$.

We next claim that $\varepsilon_{\hat{\underline{b}}} = e_\pi(\varepsilon_{\hat{\underline{b}}})$.  
 %
  %
 This is true if $e'(\varepsilon_{\hat{\underline{b}}}) = 0$ for every primitive idempotent $e'$ of $A_\CC$ orthogonal to $e_\pi$. But for any such $e'$ the surjective map
$e'(\CC\cdot {_\Pi}X_{L,S}) \to e'(\CC\cdot Y_{\pi})$ induced by $\pi$ is not bijective, and so the exact sequence in Proposition \ref{yoneda}(ii) implies
\begin{align*} {\rm dim}_{\CC}(e'(\CC\cdot{\rm im}(\phi))) &= {\rm dim}_{\CC}(e'(\CC\cdot P)) - {\rm dim}_{\CC}(e'(\CC\cdot {_\Pi}X_{L,S}))\\ &< {\rm dim}_{\CC}(e'(\CC\cdot P)) - {\rm dim}_\CC(e'(\CC\cdot Y_\pi)) \\
&= {\rm dim}_{\CC}(e'(A_\CC)^d) - {\rm dim}_\CC(e'(A_\CC)^r)\\
&= (d-r){\rm dim}_{\CC}(e'A_\CC).\end{align*}
This inequality combines with \cite[Lem. 4.12]{bses} to imply $e'({\rm im}({\bigwedge}_{i=r+1}^{i=d} \phi_i))=0$, as required.

We note next that the exact triangle (\ref{Ttriangle}) implies $e_\pi$ is the sum ${\sum} e$
of all primitive central idempotents $e$ of $A$ with $e(\mathcal{F}\cdot\ker(\pi_T))=(0)$. Since the map $e_\pi(\calF\cdot {_\Pi}\mathcal{S}_S^T(L)^{\rm tr}) \to e_\pi(\calF\cdot Y_{\pi})$ induced by $\pi_T$ is bijective, our choice of basis $\hat{\underline{b}}$ implies both that $\{e_\pi(\hat b_{i})\}_{r< i \le d}$ is a basis of the $e_\pi A$-module $e_\pi(\calF\cdot {\rm im}(\phi))$ and also that $e_\pi(\calF\cdot {^\Pi}\mathcal{O}_{L,S}^\times)$ is the kernel of the map $e_\pi(\calF\cdot P) \to e_\pi{\prod}_{r< c\le d} A$ given by $(\phi_c)_c$. Applying again the general result of \cite[Lem. 4.14]{bses} in this context we may therefore deduce that there is a containment
\begin{equation}\label{key inclusion} \varepsilon_{\hat{\underline{b}}} = e_\pi(\varepsilon_{\hat{\underline{b}}}) \in e_\pi{\bigwedge}^r_{A}(\calF\cdot {^\Pi}\mathcal{O}_{L,S}^\times).\end{equation}
In particular, this combines with (\ref{bigcapbigwedge}) to imply $\varepsilon_{\hat{\underline{b}}}\in {{{\bigcap}}}^r_{\A_{(\frp)}}({^\Pi}\co^\times_{L,S,T})_{(\frp)}$, as required.

To prove the remaining claims we recall that $\D_{\A_{(\frp)}}\bigl(({_\Pi}C_{T})_{(\frp)}\bigr)$ is equal to 
$$\D_{\A_{(\frp)}}(P^\bullet_{(\frp)})=({{{\bigcap}}}^d_{\A_{(\frp)}}P_{(\frp)})\otimes_{\xi(\A_{(\frp)})}\Hom_{\xi(\A_{(\frp)})}\bigl({{{\bigcap}}}^d_{\A_{(\frp)}}P_{(\frp)},\xi(\A_{(\frp)})\bigr)=\xi(\A_{(\frp)})\cdot\beta_{\hat{\underline{b}}},$$
with $\beta_{\hat{\underline{b}}}:=(\wedge_{i=1}^{i=d}\hat{b}_i)\otimes(\wedge_{i=1}^{i=r}\hat{b}_i^*).$ To prove $\xi(\A_{(\frp)})\varepsilon_{\hat{\underline{b}}} = \W_{T,\frp}$  it is therefore enough to show 

\begin{equation}\label{epsilonbeta}\varepsilon_{\hat{\underline{b}}}=\Theta_{\underline{b}_{\pi,\frp}}(\beta_{\hat{\underline{b}}}).\end{equation}

After noting that (\ref{key inclusion}) and the definition of $\Theta_{\underline{b}_{\pi,\frp}}$ imply $\varepsilon_{\hat{\underline{b}}}$ and $\Theta_{\underline{b}_{\pi,\frp}}(\beta_{\hat{\underline{b}}})$ belong to $e_\pi{\bigwedge}^r_{A}(\calF\cdot {^\Pi}\mathcal{O}_{L,S}^\times)={\bigwedge}^r_{Ae_\pi}(e_\pi(\calF\cdot {^\Pi}\mathcal{O}_{L,S}^\times))$, and that $e_\pi(\calF\cdot {^\Pi}\mathcal{O}_{L,S}^\times)$ is a free $Ae_\pi$-module of rank $r$, the general results of \cite[Lem. 4.12 and Th. 4.19(ii)]{bses} combine to imply that (\ref{epsilonbeta}) is valid if,
for every $r$-tuple $(\varphi_i)_{i \in [r]}$ of elements of $\Hom_{\A}({^\Pi}\mathcal{O}_{L,S,T}^\times,\A)_{(\frp)}$, one has  $(\wedge_{i=1}^{i=r}\varphi_i)(\varepsilon_{\hat{\underline{b}}})=(\wedge_{i=1}^{i=r}\varphi_i)(\Theta_{\underline{b}_{\pi,\frp}}(\beta_{\hat{\underline{b}}}))$. To check this, we fix  $(\varphi_i)_{i \in [r]}$ and use the vanishing of ${\rm Ext}^1_{\A_{(\frp)}}(\cok(\iota_{(\frp)}),\A_{(\frp)})$ to choose a pre-image  $\hat\varphi_i$ of $\varphi_i$ under the `restriction through $\iota_{(\frp)}$' map $\Hom_{\A}(P,\A)_{(\frp)} \to \Hom_{\A}({^\Pi}\mathcal{O}_{L,S,T}^\times,\A)_{(\frp)}$. Then, defining $N(\phi,\{\hat\varphi_i\},\hat{\underline{b}})\in {\rm M}_d(A)$ by 
\begin{equation}\label{matrixN} N(\phi,\{\hat\varphi_i\},\hat{\underline{b}})_{ij} = \begin{cases} \hat\varphi_j(\hat b_i), &\text{if $1\le i\le d$, $1\le j\le r$}\\
                                                \phi_j(\hat b_i), &\text{if $1\le i\le d$, $r < j \le d$,}\end{cases}\end{equation} 
one obtains the required equality via the computation
\begin{equation}\label{almost2}(\wedge_{i=1}^{i=r}\varphi_i)(\varepsilon_{\hat{\underline{b}}})={\rm Nrd}_{A}(N(\phi,\{\hat\varphi_i\},\hat{\underline{b}})) = (\wedge_{i=1}^{i=r}\varphi_i)(\Theta_{\underline{b}_{\pi,\frp}}(\beta_{\hat{\underline{b}}})),\end{equation}
where the first equality follows directly from the general result \cite[Lem. 4.10]{bses} and the second from the explicit computation in \cite[Lem. 7.3.1]{dals}. 
\end{proof}

\begin{remark}\label{A2remark}{\em  If $\A$ is not locally-Gorenstein, then it is possible 
$\varepsilon_{\hat{\underline{b}}}\notin {{\bigcap}}^r_{\A_{(\frp)}}({^\Pi}\co^\times_{L,S,T})_{(\frp)}$, but the argument of Proposition \ref{yoneda}(iv) still shows $\xi(\A_{(\frp)})\varepsilon_{\hat{\underline{b}}} = \W_{T,\frp}$. To see this, write $n$ for the exponent of ${\rm Ext}^1_{\A_{(\frp)}}(\cok(\iota_{(\frp)}),\A_{(\frp)})$. Then the argument used to prove (\ref{epsilonbeta}) shows that
\begin{multline*}(\wedge_{i=1}^{i=r}\varphi_i)(n^r\cdot \varepsilon_{\hat{\underline{b}}})= (\wedge_{i=1}^{i=r}(n\cdot\varphi_i))(\varepsilon_{\hat{\underline{b}}})= {\rm Nrd}_A(N(\phi,\{n\cdot\varphi_i\},\hat{\underline{b}}))\\ = (\wedge_{i=1}^{i=r}(n\cdot\varphi_i))(\Theta_{\underline{b}_{\pi,\frp}}(\beta_{\hat{\underline{b}}}))  = (\wedge_{i=1}^{i=r}\varphi_i)(n^r\cdot \Theta_{\underline{b}_{\pi,\frp}}(\beta_{\hat{\underline{b}}})).\end{multline*} 
This implies $n^r\cdot\varepsilon_{\hat{\underline{b}}}=n^r\cdot\Theta_{\underline{b}_{\pi,\frp}}(\beta_{\hat{\underline{b}}})$ and so (\ref{epsilonbeta}) remains valid inside $e_\pi{\bigwedge}^r_{A}(\calF\cdot {^\Pi}\mathcal{O}_{L,S}^\times)$.}
\end{remark}

\subsection{The proof of Theorem \ref{main result2}}\label{proof of main result 1}

\subsubsection{}In view of (\ref{WT}) and the general result of \cite[Thm. 4.17 (iii)]{bses}, claim (i) of Theorem \ref{main result2} will follow if, for each $\frp\in {\rm Spec}(\Lambda)$, one has  
$\W_{T,\frp}\subseteq {{{\bigcap}}}^r_{\A_{(\frp)}}({^\Pi}\mathcal{O}^\times_{L,S,T})_{(\frp)}$. The latter fact, however, follows directly from the result of Proposition \ref{yoneda}(iv).

\subsubsection{} We next deduce Theorem \ref{main result2}(ii) from (the proof of) Proposition \ref{yoneda}(iv). To do this we set  $U := \mathcal{O}_{L,S,T}^{\times}$ and write $h$ for the locally-quadratic presentation of $_\Pi \mathcal{S}_S^T(L)^{\rm tr}$ given by Proposition \ref{yoneda}(ii). We fix $\frp\in {\rm Spec}(\Lambda)$ and write $h_\frp$ for the $\frp$-localisation of $h$. Then, by (\ref{WT}), it suffices to prove an equality of $\xi(\A_{(\frp)})$-modules 
\begin{equation}\label{wanted eq}\{({\bigwedge}_{i=1}^{i=r}\varphi_a)(\varepsilon)\!:\! \,\varphi_i \in \Hom_{\mathcal{A}_{(\frp)}}((^{\Pi}U)_{(\frp)},\mathcal{A}_{(\frp)}),\,\varepsilon\in\W_{T,\frp}\} =
 {\rm Fit}^{r}_{\mathcal{A}_{(\frp)}}(h_\frp).\end{equation}
The $\A_{(\frp)}$-modules $P^0_{(\frp)}$ and $P_{(\frp)}$ are both free of rank $d$ and so, after appropriately modifying $h_\frp$,  we can assume $P^0_{(\frp)}=P_{(\frp)}$. We then recall that, for any basis $\hat{\underline{b}}$ of $P_{(\frp)}$ as in Proposition \ref{yoneda}(iii), the element $\varepsilon_{\hat{\underline{b}}}$ generates $\W_{T,\frp}$ and, for any $\varphi_i \in \Hom_{\mathcal{A}}(^{\Pi}U,\mathcal{A})_{(\frp)}$, satisfies (\ref{almost2}). The left hand side of (\ref{wanted eq}) is thus equal to
$$\xi(\A_{(\frp)})\cdot\{{\rm Nrd}_A(N(\phi,\{\hat\varphi_i\},\hat{\underline{b}})):\,\varphi_i \in \Hom_{\mathcal{A}_{(\frp)}}((^{\Pi}U)_{(\frp)},\mathcal{A}_{(\frp)})\},$$
with each matrix $N(\phi,\{\hat\varphi_i\},\hat{\underline{b}})$ as in (\ref{matrixN}). Now the final assertion of Proposition \ref{yoneda}(iii) implies the first $r$ columns of the matrix, with respect to the basis $\hat{\underline{b}}$, of the endomorphism $\phi_{(\frp)}$ of $P_{(\frp)}$  are equal to $0$. Hence, as $\varphi_i$ varies over $\Hom_{\mathcal{A}}(^{\Pi}U,\mathcal{A})_{(\frp)}$, the matrices $N(\phi,\{\hat\varphi_i\},\hat{\underline{b}})$ account for all of the matrices which both occur in the definition of
${\rm Fit}^{r}_{\mathcal{A}_{(\frp)}}(h_\frp)$ and have non-zero reduced norm. The latter Fitting invariant is therefore equal to the last displayed expression, as suffices to prove the required equality (\ref{wanted eq}).

Before proving the second assertion of claim (ii), we note that $({{\bigcap}}^{r}_{\A}{^{\Pi}}\co^\times_{L,S,T})^{e_\pi}/\mathcal{W}_T$ and  
$\xi(\mathcal{A})^{e_\pi}/{\rm Fit}_{\mathcal{A}}^r(h)$ are both finite, the former by Theorem \ref{main result} and the latter as a consequence of the definition of $e_\pi$ and the argument of \cite[Th. 3.20(iv)]{bses}. It is therefore enough to prove the existence of a perfect bilinear pairing after localisation at $\frp \in {\rm Spec}(\Lambda)$. In the rest of this section we thus fix $\frp$ and set $\mathcal{B} := \xi(\mathcal{A}_{(\mathfrak{p})})$. For a $\mathcal{B}$-module $X$, we set $X^\diamond := \Hom_{\mathcal{B}}(X,\mathcal{B})$ and $X^\wedge := \Hom_{\mathcal{B}}(X,\zeta(A)/\mathcal{B})$, both regarded as $\mathcal{B}$-modules in the natural way. For any idempotent $e$ of $\zeta(A)$ we also set $Xe := \{(1\otimes x)e: x \in X\}$ where $1\otimes x$ is the image of $x$ in $\mathcal{F}\otimes_\Lambda X$, and  write $X^e$ for the submodule $\{x \in X: e(1\otimes x) = 1\otimes x\}$ of $X$. As a first step, we establish some useful facts about  dual modules. 

\begin{lemma}\label{dual-frac} Assume $\mathcal{B}$ is a Gorenstein $\Lambda_{(\frp)}$-order and fix an  
idempotent $e$ of $\zeta(A)$. 
\begin{itemize}
\item[(i)] If $X$ is $\Lambda_{(\mathfrak{p})}$-free, then the natural maps $X \to X^{\diamond\diamond}$ and $(Xe)^\diamond\to (X^\diamond)^e$ are bijective.
\item[(ii)] If $X$ is finite, then the natural map $X \to X^{\wedge\wedge}$ is bijective.
\end{itemize}
\end{lemma}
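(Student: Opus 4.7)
My plan is to reduce both parts to standard duality results over the DVR $\Lambda_{(\frp)}$, exploiting that $\mathcal{B}=\xi(\mathcal{A}_{(\frp)})$ is commutative as a subring of $\zeta(A)$. The Gorenstein hypothesis is then equivalent to the existence of an isomorphism of $\mathcal{B}$-modules $\mathcal{B}\cong \mathcal{B}^{\ast}$, with $\mathcal{B}^{\ast}:=\Hom_{\Lambda_{(\frp)}}(\mathcal{B},\Lambda_{(\frp)})$, and I would fix such an isomorphism at the outset.

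For the first assertion of (i), Hom-tensor adjunction combined with this identification yields a natural isomorphism of functors $(-)^{\diamond}=\Hom_{\mathcal{B}}(-,\mathcal{B})\cong \Hom_{\Lambda_{(\frp)}}(-,\Lambda_{(\frp)})=: (-)^{\ast}$ on $\mathcal{B}$-modules. Squaring this converts $X\to X^{\diamond\diamond}$ into the classical biduality map $X\to X^{\ast\ast}$, which is bijective for every finitely generated $\Lambda_{(\frp)}$-free module; a routine diagram chase using naturality of the adjunctions would confirm that the two maps agree.

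For the second assertion of (i), I would send $\psi\in (Xe)^{\diamond}$ to its composition with the canonical surjection $X\twoheadrightarrow Xe$. The image would lie in $(X^{\diamond})^{e}$ because extending $\psi$ $\mathcal{F}$-linearly to a $\zeta(A)$-linear map $\tilde\psi\colon (\mathcal{F}\otimes_{\Lambda_{(\frp)}}X)e\to \zeta(A)$ forces $\tilde\psi(y)=e\tilde\psi(y)$, so that the image of $\psi$ lies in $\zeta(A)e\cap \mathcal{B}$. Injectivity is immediate from the surjectivity of $X\twoheadrightarrow Xe$, and surjectivity would follow by constructing an inverse via $\psi((1\otimes x)e):=\varphi(x)$ for $\varphi\in (X^{\diamond})^{e}$; well-definedness of this assignment reduces to the fact that the $\zeta(A)$-linear extension of any such $\varphi$ kills $1\otimes x$ whenever $(1\otimes x)e=0$.

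For part (ii), the plan is to identify $(-)^{\wedge}$ with the ``Pontryagin dual'' $(-)^{\vee}:=\Hom_{\Lambda_{(\frp)}}(-,\mathcal{F}/\Lambda_{(\frp)})$. Applying $\Hom_{\Lambda_{(\frp)}}(\mathcal{B},-)$ to the tautological sequence $0\to \Lambda_{(\frp)}\to \mathcal{F}\to \mathcal{F}/\Lambda_{(\frp)}\to 0$, and using that $\mathcal{B}$ is $\Lambda_{(\frp)}$-free (so that $\mathrm{Ext}^{1}_{\Lambda_{(\frp)}}(\mathcal{B},\Lambda_{(\frp)})$ vanishes) together with the Gorenstein identification $\mathcal{B}^{\ast}\cong \mathcal{B}$, would yield a natural isomorphism of $\mathcal{B}$-modules $\zeta(A)/\mathcal{B}\cong \Hom_{\Lambda_{(\frp)}}(\mathcal{B},\mathcal{F}/\Lambda_{(\frp)})$. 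Adjunction would then give a functorial isomorphism $X^{\wedge}\cong X^{\vee}$, and Matlis duality over the DVR $\Lambda_{(\frp)}$ would deliver bijectivity of the canonical map $X\to X^{\vee\vee}$ for finite $X$. The principal technical task throughout will be verifying that the identifications are compatible with evaluation, so that the canonical maps $X\to X^{\diamond\diamond}$ and $X\to X^{\wedge\wedge}$ really do correspond to the standard biduality maps $X\to X^{\ast\ast}$ and $X\to X^{\vee\vee}$; this amounts to a careful tracking of naturality constraints on the adjunction isomorphisms, but is otherwise routine.
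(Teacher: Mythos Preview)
Your proposal is correct and the core ideas are sound, but the route differs from the paper's in two places.

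For the first assertion of (i), the paper simply invokes the classical fact (Bass, \emph{On the ubiquity of Gorenstein rings}) that every lattice over a Gorenstein order is reflexive. Your reduction via the adjunction $\Hom_{\mathcal{B}}(-,\mathcal{B})\cong\Hom_{\mathcal{B}}(-,\mathcal{B}^{\ast})\cong\Hom_{\Lambda_{(\frp)}}(-,\Lambda_{(\frp)})$ is a perfectly valid alternative, and in fact re-proves that special case of Bass's theorem directly. The second assertion of (i) you handle in the same way as the paper does, by writing down mutually inverse maps.

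For (ii) the approaches diverge more noticeably. The paper stays internal to $\mathcal{B}$-duality: it takes a presentation $0\to Y\to\mathcal{B}^{d}\to X\to 0$, applies $\Hom_{\mathcal{B}}(-,\mathcal{B})$ twice (using the identification $\Ext^{1}_{\mathcal{B}}(X,\mathcal{B})\cong X^{\wedge}$), and then appeals to (i) for the reflexivity of $Y$ and $\mathcal{B}^{d}$ to conclude via the five lemma. Your argument instead identifies $\zeta(A)/\mathcal{B}\cong\Hom_{\Lambda_{(\frp)}}(\mathcal{B},\mathcal{F}/\Lambda_{(\frp)})$ and reduces to Matlis duality over the DVR. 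Both work; your approach is more uniform (the same adjunction trick handles (i) and (ii)), while the paper's argument for (ii) has the virtue of being self-contained and of reusing (i) rather than importing a separate duality statement. The compatibility checks you flag (that the canonical evaluation maps match under the adjunction identifications) are indeed routine but should not be skipped, since the lemma is about specific natural maps rather than abstract isomorphism.
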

\begin{proof} The first assertion of claim (i) is true since, as $\mathcal{B}$ is Gorenstein, every $\mathcal{B}$-lattice is reflexive (cf. \cite[Th. 6.2]{bass}). The second assertion is true since the maps $(Xe)^\diamond\to (X^\diamond)^e$ and $(X^\diamond)^e\to (Xe)^\diamond$ given by $\theta\mapsto (x \mapsto \theta(xe))$ and $\phi\mapsto (xe \mapsto \phi(x))$ are mutually inverse. To prove claim (ii) we fix a surjective map of $\mathcal{B}$-modules $\mathcal{B}^d\rightarrow X$ (with $d \in \mathbb{N}$) and write $Y$ for its kernel. We then need only  consider the exact commutative diagram
\[ \begin{CD}
0 @> >> Y^{\diamond\diamond} @> >> (\mathcal{B}^d)^{\diamond\diamond} @>  >>  X^{\wedge\wedge}@> >> 0\\
@. @A AA @A AA @A AA \\
0 @> >> Y @> >> \mathcal{B}^d @>  >>  X@> >> 0.
\end{CD}\]
Here all vertical arrows are the canonical maps and the upper row is obtained by (twice) applying the functor $\Hom_\mathcal{B}(-,\mathcal{B})$ to the exact sequence
 $0\rightarrow Y\rightarrow \mathcal{B}^d\rightarrow X\rightarrow 0$ and noting that $\Ext_{\mathcal{B}}^1(X,\mathcal{B})$ identifies with $X^\wedge$. Then, since claim (i) implies the first two vertical maps are bijective, the third vertical map must also be bijective, as required. \end{proof}

By \cite[Lem. 4.16]{bses}, the $\mathcal{B}$-submodule of ${\bigwedge}^r_{A^{\rm op}} (\calF\cdot 
\Hom_{\A_{(\frp)}}({^\Pi}U,\A))$ given by   
\[ {\bigwedge}^r_{\mathcal{A}_{(\frp)}^{\rm op}} \Hom_{\A}({^\Pi}U,\A)_{(\frp)} := \mathcal{B}\cdot \{ {\wedge}_{i \in [r]}\varphi_i: \varphi_i \in \Hom_{\A}({^\Pi}U,\A)_{(\frp)}\}\]
is finitely generated. Further, by combining the definition of reduced Rubin lattices with (the first assertion of) Lemma \ref{dual-frac}(i), one finds this lattice is equal to $\bigl({\bigcap}^{r}_{\A}{^{\Pi}}U\bigr)_{(\frp)}^\diamond$. 

We now write $Q$ for the finite $\mathcal{B}$-module $\bigl({\bigcap}^{r}_{\A}{^{\Pi}}U\bigr)_{(\frp)}^{e_\pi}/\mathcal{W}_{T,\frp}$. Then, by applying the functor $\Hom_\mathcal{B}(-,\mathcal{B})$ to the tautological exact sequence %
\[ 0 \to \mathcal{W}_{T,\frp} \to \bigl({\bigcap}^{r}_{\A}{^{\Pi}}U\bigr)_{(\frp)}^{e_\pi} \to Q \to 0,\]
and using (the second assertion of) Lemma \ref{dual-frac}(i) with $X = \bigl({\bigcap}^{r}_{\A}{^{\Pi}}U\bigr)_{(\frp)}$ and $e=e_\pi$, one obtains the upper row of the exact commutative diagram
\[\begin{CD} 0 @> >> \bigl( {\bigwedge}^r_{\mathcal{A}_{(\frp)}^{\rm op}} \Hom_{\A}({^\Pi}U,\A)_{(\frp)}\bigr)e_\pi @> >> \mathcal{W}_{T,\frp}^\diamond @> >> Q^\wedge @> >> 0\\
& & @V \epsilon VV @V \epsilon' VV \\
0 @> >> {\rm Fit}_\mathcal{A}^r(h)_{(\frp)} @> >> \mathcal{B}^e @> >> \mathcal{B}^e/{\rm Fit}_\mathcal{A}^r(h)_{(\frp)} @> >> 0.\end{CD}\]
Here the lower exact sequence is obvious and both $\epsilon$ and $\epsilon'$ are induced by restriction of the map ${\bigwedge}^r_{A^{\rm op}} (\calF\cdot 
\Hom_{\A}({^\Pi}U,\A))\to \zeta(A)$ that sends each $\Phi$ to $\Phi(\varepsilon_{\hat{\underline{b}}})$. In particular, since $\mathcal{W}_{T,\frp}=\mathcal{B}\varepsilon_{\hat{\underline{b}}}$ is a free $\mathcal{B}e_\pi$-module of rank one, the first claim of Theorem \ref{main result2}(ii) (as proved above) implies $\epsilon$ is bijective and, since $(\mathcal{B}e_\pi)^\diamond$ is isomorphic to $(\mathcal{B}^\diamond)^{e_\pi} = \mathcal{B}^{e_\pi}$ (by Lemma \ref{dual-frac}(i)), the map $\epsilon'$ is also bijective. From the above diagram, we can therefore deduce the existence of an induced isomorphism of $\mathcal{B}$-modules 
\[ \iota: \bigl(\bigl({\bigcap}^{r}_{\A}{^{\Pi}}U\bigr)_{(\frp)}^{e_\pi}/\mathcal{W}_{T,\frp}\bigr)^\wedge = Q^\wedge \cong \mathcal{B}^{e_\pi}/{\rm Fit}_\mathcal{A}^r(h)_{(\frp)}.\]
This isomorphism in turn gives rise to an $\mathcal{B}$-bilinear pairing of finite modules
\[ \bigl(\bigl({\bigcap}^{r}_{\A}{^{\Pi}}U\bigr)_{(\frp)}^{e_\pi}/\mathcal{W}_{T,\frp}\bigr) \times 
\mathcal{B}^{e_\pi}/{\rm Fit}_\mathcal{A}^r(h)_{(\frp)} \to \zeta(A)/\mathcal{B}, \quad (u,v)\mapsto (\iota^{-1}(v))(u)\]
and Lemma \ref{dual-frac}(ii) implies that this pairing is perfect. 

\subsubsection{} In this section we reduce the proof of Theorem \ref{main result2}(iii) to a claim which will be verified in \S\ref{now turn}. Throughout we assume the notation and hypotheses of Theorem \ref{main result2}(iii). For an $\mathcal{A}$-module $M$ we set 
$M^* := \Hom_{\mathcal{A}}(M,\mathcal{A})$. We first require the two following lemmas. 

\begin{lemma}\label{tech21} Fix $\varepsilon\in \W_T$ and $m \in \mathbb{N}$. Then there exists $n_m\in \mathbb{N}$ with the following property: if $\{\varphi_i\}_{i \in [r]}$ and $\{\varphi'_i\}_{i \in [r]}$ are subsets of $(^{\Pi}\mathcal{O}_{L,S,T}^\times)^*$ with $\varphi_i -\varphi'_i \in n_m\!\cdot (^{\Pi}\mathcal{O}_{L,S,T}^\times)^*$ for all $i\in [r]$, then $(\wedge_{i=1}^{i=r}\varphi_i)(\varepsilon) - (\wedge_{i=1}^{i=r}\varphi'_i)(\varepsilon)\in m\cdot\mathcal{A}.$
\end{lemma}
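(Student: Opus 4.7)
The plan is to exploit the polynomial dependence of the pairing $(\wedge_{i=1}^{i=r}\varphi_i)(\varepsilon)$ on the $\varphi_i$ and to combine this with commensurability of $\Lambda$-lattices. I will work locally at each $\frp\in\Spec(\Lambda)$. Since $\W_T$ is locally-cyclic (cf. \S\ref{cohdescription}), I may write $\varepsilon=\eta\cdot\varepsilon_{\hat{\underline{b}}}$ for some $\eta\in\xi(\A_{(\frp)})$ and some basis $\hat{\underline{b}}$ of $P_{(\frp)}$ as in Proposition \ref{yoneda}(iii). Because $\A$ is locally-Gorenstein, the argument following (\ref{bigcapbigwedge}) implies that the restriction map $\Hom_\A(P,\A)_{(\frp)}\to\Hom_\A({^\Pi}\co^\times_{L,S,T},\A)_{(\frp)}$ is surjective, so each $\varphi_i$ and $\varphi'_i$ admits a lift $\hat\varphi_i,\hat\varphi'_i$, and the identity (\ref{almost2}) gives
$$(\wedge_{i=1}^{i=r}\varphi_i)(\varepsilon)=\eta\cdot{\rm Nrd}_A\bigl(N(\phi,\{\hat\varphi_i\},\hat{\underline{b}})\bigr).$$

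Now assume $\varphi_i-\varphi'_i=n\psi_i$ with $\psi_i\in\Hom_\A({^\Pi}\co^\times_{L,S,T},\A)$. By linearity of the lifting procedure, I may arrange $\hat\varphi_i-\hat\varphi'_i=n\hat\psi_i$, so that the associated matrices satisfy $N=N'+n\Delta$ for some $\Delta\in M_d(\A_{(\frp)})$. The crucial step is then to establish the integrality assertion
$${\rm Nrd}_A(N'+n\Delta)-{\rm Nrd}_A(N')\in n\cdot\xi(\A_{(\frp)}).$$
This will follow once one verifies that ${\rm Nrd}_A(N'+t\Delta)$, viewed as an element of $\zeta(A)[t]$ via a central indeterminate $t$, has all of its coefficients in $\xi(\A_{(\frp)})$. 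Equivalently, one needs the compatibility $\xi(\A_{(\frp)}[t])=\xi(\A_{(\frp)})[t]$ together with the standard fact that ${\rm Nrd}_{A[t]}$ carries $M_d(\A_{(\frp)}[t])$ into $\xi(\A_{(\frp)}[t])$. Granted this, specialising $t=n$ and subtracting the constant term yields the claim.

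Finally, since $\xi(\A)$ and $\A$ are commensurable $\Lambda$-lattices in $A$, there exists $c\in\NN$ with $c\cdot\xi(\A)\subseteq\A$. Setting $n_m:=cm$ and combining with the previous step (using also that $\eta\cdot\xi(\A_{(\frp)})\subseteq\xi(\A_{(\frp)})$, as $\xi(\A_{(\frp)})$ is closed under multiplication), one obtains the desired inclusion $(\wedge_{i=1}^{i=r}\varphi_i)(\varepsilon)-(\wedge_{i=1}^{i=r}\varphi'_i)(\varepsilon)\in m\cdot\A_{(\frp)}$ at each $\frp$, hence in $m\A$ globally. The principal technical obstacle lies in the integrality claim for the coefficients of the polynomial ${\rm Nrd}_A(N'+t\Delta)$ — essentially the identity $\xi(\A[t])=\xi(\A)[t]$ — which will have to be extracted from the universal polynomial nature of the reduced norm and the good behaviour of Whitehead orders under polynomial ring extensions.
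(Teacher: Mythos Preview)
Your approach via reduced norms of matrices is genuinely different from the paper's, but the proof as written has a real gap at precisely the point you flag as ``the principal technical obstacle''.

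The claim that the coefficients of ${\rm Nrd}_A(N'+t\Delta)$ (as a polynomial in $t$) all lie in $\xi(\A_{(\frp)})$ is not justified, and your proposed route---an identity $\xi(\A[t])=\xi(\A)[t]$---does not make sense in this framework: the Whitehead order of \S\ref{Whitehead section} is only defined for $\Lambda$-orders with $\Lambda$ a Dedekind domain, and $\Lambda[t]$ is two-dimensional. More concretely, the reduced norm on $M_d(A)$ is, on each simple component $A_j$, computed as a determinant in $M_{dn_j}(E_j)$ over a splitting field; when you expand $\det(\rho_j(N')+t\,\rho_j(\Delta))$ multilinearly in $t$, the individual coefficients are sums of determinants in which only \emph{some} of the $dn_j$ columns come from $\rho_j(\Delta)$. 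These mixed determinants are not themselves reduced norms of matrices in $M_d(\A_{(\frp)})$ (a ``column'' of a matrix over $A$ corresponds to a \emph{block} of $n_j$ columns in the splitting field), so there is no direct reason for them to land in $\xi(\A_{(\frp)})$. Your argument could be salvaged by observing that the polynomial has degree bounded by a constant $D$ depending only on $A$ and $d$, takes values in $\xi(\A_{(\frp)})$ at all integers, and hence (by Lagrange interpolation) has coefficients in $\tfrac{1}{D!}\xi(\A_{(\frp)})$; the factor $D!$ is then absorbed into $n_m$. But this is not the argument you gave.

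By contrast, the paper sidesteps the Whitehead order entirely. It fixes once and for all a splitting field $E$ with ring of integers $\mathcal{O}$, integral lattices $T_j$ in each simple module, and observes that every element $\wedge_{i=1}^{i=r}\varphi_i$ lives in a single finitely generated $\mathcal{O}$-lattice $\Gamma \subset {\bigwedge}^r_{A^{\rm op}}(\mathcal{F}\cdot(^\Pi\mathcal{O}_{L,S,T}^\times)^*)$. Ordinary multilinearity of the \emph{usual} exterior power over the splitting field then gives $\wedge_i\varphi_i - \wedge_i\varphi'_i\in n_m\cdot\Gamma$ directly, and the evaluation map $\lambda\mapsto\lambda(\varepsilon)$ has image in a finitely generated $\mathcal{O}$-lattice, so a single commensurability constant does the job. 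This avoids any question about reduced norms of polynomial matrices, and also avoids working prime by prime.
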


\begin{proof} In this argument we fix a decomposition ${\prod}_{j\in J}A_j$ 
of $A$ as a product of simple algebras, with $1={\sum}_{j\in J}e_j$ the corresponding decomposition of the identity of $A$ as a sum of primitive central idempotents. Fix a number field $E$ that splits each algebra $A_j$, set $A_j':= E\cdot A_{j}$, fix a simple $A_j'$-module $V_j$ and set $d_j:= {\rm dim}_{E}(V_j)$. Write $\mathcal{O}$ for the ring of integers of $E$ and $\mathcal{A}'_j$ for the $\mathcal{O}$-order in $A'_j$ generated by the image of $\mathcal{A}$ in $A_j$. Then, if necessary after increasing $E$, we can assume (compatibly with  the normalisation of Remark \ref{normalization rem}) that for each $j$ there exists a full $\mathcal{A}_j'$-sublattice $T_j$ in $V_j$ that is free over $\mathcal{O}$ and such that the elements $\{v_k\}_{k \in [d_j]}$ used in the definition \cite[(4.2.3)]{bses} of the $A_j$-th component of each reduced exterior power $\wedge_{i=1}^{i=r}\varphi_i$ are an $\mathcal{O}$-basis of $T_j$. With respect to these choices, each element $\wedge_{i=1}^{i=r}\varphi_i$ belongs to
\[ \Gamma:= {\bigoplus}_{j\in J} {\bigwedge}_{\mathcal{O}}^{rd_j}(T_j\otimes_{(\mathcal{A}_j')^{\rm op}}\Hom_{\mathcal{A}_j'}(e_j(\mathcal{O}\cdot(^{\Pi}\mathcal{O}_{L,S,T}^\times)),\mathcal{A}_j')) \subset E\cdot {{\bigwedge}}^r_{A^{\rm op}}\Hom_A(\QQ\cdot ^{\Pi}\mathcal{O}_{L,S,T}^\times,A).\]
In addition, since $\Gamma$ is a finitely generated $\mathcal{O}$-module and $\mathcal{O}\cdot\mathcal{A}$ has finite index in ${\prod}_{j\in J}\mathcal{A}'_j$, there exists an integer $n_m$ with the property that $\lambda(\varepsilon) \in m (\mathcal{O}\cdot \mathcal{A})$ for all $\lambda\in n_m\cdot \Gamma$.

Now, by the stated assumptions, one has $\wedge_{i=1}^{i=r}\varphi_i - \wedge_{i=1}^{i=r}\varphi'_i\in n_m\cdot\Gamma$ and hence (by the above argument) also $(\wedge_{i=1}^{i=r}\varphi_i)(\varepsilon) - (\wedge_{i=1}^{i=r}\varphi'_i)(\varepsilon)\in m(\mathcal{O}\cdot\mathcal{A})$. This then implies the claimed congruence since $\wedge_{i=1}^{i=r}\varphi_i$ and $\wedge_{i=1}^{i=r}\varphi'_i$ belongs to $A$ and one has $A\cap m(\mathcal{O}\cdot\mathcal{A}) = m\cdot\mathcal{A}$. \end{proof}

\begin{lemma}\label{tech2}  For each $\{\varphi_i\}_{i\in [r]}\subset (^{\Pi}\mathcal{O}_{L,S,T}^\times)^*$ and $n \in \mathbb{N}$, there exists a subset $\{\varphi_i'\}_{i \in [r]}$ of $(^{\Pi}\mathcal{O}_{L,S,T}^\times)^*$ with the following properties.
\begin{itemize}
\item[(i)] For all $i\in [r]$ one has $\varphi_i' - \varphi_i\in n\cdot (^{\Pi}\mathcal{O}_{L,S,T}^\times)^*$.
\item[(ii)] Write $\rho_{S,S'}$ for the restriction map $(^{\Pi}\mathcal{O}_{L,S,T}^\times)^*\to (^{\Pi}\mathcal{O}^\times_{L,S',T})^*$, with $S'$ as in Theorem \ref{main result2}(iii). Then the $\mathcal{A}$-module 
 generated by $\{\rho_{S,S'}(\varphi_i')\}_{i\in [r]}$ is free of rank $r$.
    \end{itemize}
\end{lemma}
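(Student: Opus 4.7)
The plan is to construct the required $\varphi'_i$ as perturbations $\varphi_i+nt\tau_i$ sharing a common integer $t$, where $\tau_1,\ldots,\tau_r$ are chosen once and for all so that $\{\rho_{S,S'}(\tau_i)\}_{i\in [r]}$ is $A$-linearly independent in $\mathcal{F}\cdot (^{\Pi}\mathcal{O}_{L,S',T}^\times)^*$, and then $t$ is selected to ensure the independence survives the perturbation by the original $\varphi_i$.

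As a preliminary step, I would verify that $\mathcal{F}\cdot (^{\Pi}\mathcal{O}_{L,S',T}^\times)^*$ admits a free rank-$r$ $A$-module direct summand. Combining the Dirichlet regulator $R_{L,S'}$ with the Noether-Deuring Theorem and Remark \ref{ct remark} yields a chain of $A$-module isomorphisms
\[ \mathcal{F}\cdot {^{\Pi}\mathcal{O}_{L,S',T}^\times} \cong \mathcal{F}\cdot {^{\Pi}\mathcal{O}_{L,S'}^\times} \cong \mathcal{F}\cdot {_{\Pi}\mathcal{O}_{L,S'}^\times} \cong \mathcal{F}\cdot {_{\Pi}X_{L,S'}},\]
and the hypothesis on $S'$ then exhibits a surjection of the final module onto $\mathcal{F}\cdot Y_\pi = A^r$. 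Dualising and invoking semisimplicity of $A$, both the required free summand and the $\mathcal{F}$-surjectivity of $\rho_{S,S'}$ follow at once. Choosing an $A$-basis of the summand, lifting under $\mathcal{F}\otimes\rho_{S,S'}$, and clearing denominators produces $\tau_1,\ldots,\tau_r \in (^{\Pi}\mathcal{O}_{L,S,T}^\times)^*$ with the desired independence property for their restrictions.

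For each $t\in\ZZ$, set $\varphi'_i(t):=\varphi_i+nt\tau_i$, so that (i) is automatic. Since $(^{\Pi}\mathcal{O}_{L,S',T}^\times)^*$ is $\Lambda$-torsion-free, property (ii) is equivalent to the $A$-linear independence of $\{\rho_{S,S'}(\varphi'_i(t))\}_{i\in [r]}$ in $\mathcal{F}\cdot (^{\Pi}\mathcal{O}_{L,S',T}^\times)^*$. Writing $v_i:=\rho_{S,S'}(\varphi_i)$, $w_i:=\rho_{S,S'}(\tau_i)$ and $W:=\bigoplus_i A\cdot w_i$, semisimplicity of $A$ provides a complement $W'$ and a decomposition $v_i=v_i^W+v_i^{W'}$ with $v_i^W=\sum_j b_{ij}w_j$ and $b_{ij}\in A$. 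A relation $\sum_i a_i\rho_{S,S'}(\varphi'_i(t))=0$ then projects onto $W$ to give the matrix equation $(a_1,\ldots,a_r)\cdot(ntI_r+B)=0$ in $A^r$, where $B=(b_{ij})\in \mathrm{M}_r(A)$. Thus (ii) holds precisely when $ntI_r+B$ is invertible in $\mathrm{M}_r(A)$.

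The final step is to verify that the reduced norm $\mathrm{Nrd}(ntI_r+B)\in\zeta(A)[t]$ is a non-zero polynomial in $t$; this reduces, upon base change to a splitting field of $A$, to the elementary observation that a matrix of the form $ntI+B'$ has determinant a polynomial of positive degree in $nt$ with non-zero leading coefficient. The polynomial therefore vanishes at only finitely many $t\in\ZZ$, so picking any $t$ outside this exceptional set yields $\varphi'_i:=\varphi'_i(t)$ satisfying both (i) and (ii). The principal technical obstacle is precisely this reduced-norm step, which requires care in the non-commutative setting but is resolved by the reduction to a splitting field.
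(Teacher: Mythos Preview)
Your proposal is correct and follows essentially the same approach as the paper: both perturb $\varphi_i$ by $nt\tau_i$ (the paper writes $mnf_i$) where the $\tau_i$ have $A$-independent restrictions, project onto the free rank-$r$ summand they span, and conclude via invertibility of a matrix of the form $ntI_r+B$. The only cosmetic difference is that the paper phrases the final step in terms of eigenvalues (taking $m$ large enough that $-mn$ avoids them) rather than via the reduced norm as a nonvanishing polynomial in $t$; note also that your ``precisely when'' slightly overstates matters, though only the implication you actually use is needed.
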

\begin{proof} The explicit choice of $S'$ in Theorem \ref{main result2}(iii) implies that we may choose a free $\mathcal{A}$-submodule $\mathfrak{F}$ of $(^{\Pi}\mathcal{O}^\times_{L,{S'},T})^*$ of rank $r$. We then choose a subset $\{f_i\}_{i\in [r]}$ of $(^{\Pi}\mathcal{O}_{L,S,T}^\times)^*$ for which $\{\rho_{S,{S'}}(f_i)\}_{i\in [r]}$ is an $A$-basis of $\QQ\cdot\mathfrak{F}$. For any integer $m$ we set $\varphi_{i,m} := \varphi_i + mn f_i$ and note it suffices to show that for any large enough $m$, the elements $\{\rho_{S,{S'}}(\varphi_{i,m})\}_{i\in [r]}$ are linearly independent over $A$.

Consider the composite map of $\mathcal{A}$-modules $\mathfrak{F} \to \QQ\cdot(^{\Pi}\mathcal{O}^\times_{L,{S'},T})^*\to \QQ\cdot \mathfrak{F}$ where the first arrow sends each $\rho_{S,{S'}}(f_i)$ to $\rho_{S,{S'}}(\varphi_{i,m})$ and the second is induced by a choice of $A$-equivariant section to the projection $\QQ\cdot (^{\Pi}\mathcal{O}^\times_{L,{S'},T})^*\to \QQ\cdot ((^{\Pi}\mathcal{O}^\times_{L,{S'},T})^*/\mathfrak{F})$. Then, with respect to the basis $\{\rho_{S,{S'}}(f_i)\}_{i\in [r]}$, this map is represented by a matrix of the form $M + mn  I_r$ for a matrix $M$ in ${\rm M}_r(A)$ that is independent of $m$. In particular, if $m$ is large enough to ensure that $-mn$ is not an eigenvalue of the image of $M$ in any of the simple algebra components of ${\rm M}_r(A_\CC)$, then the composite homomorphism is injective and so the elements $\{\rho_{S,{S'}}(\varphi_{i,m})\}_{i \in [r]}$ are linearly independent over $A$, as required.\end{proof}

The next result reduces Theorem \ref{main result2}(iii) to a claim that will be verified in \S\ref{now turn}. 
For a $G$-module $M$ we endow  $M^\vee:=\Hom_\ZZ(M,\QQ/\ZZ)$ with the contragredient action of $G$.

\begin{proposition}\label{local reduction} Theorem \ref{main result2}(iii) is true if for any  
\begin{itemize}
\item[-] reduced exterior product $\Phi = {{\bigwedge}}_{i=1}^{i=r}\varphi_i$ for which $\{\rho_{S,S'}(\varphi_i)\}_{i\in[r]}$ spans a free $A$-module of rank $r$,
\item[-] $\frp\in{\rm Spec}(\Lambda)$ and $\varepsilon_{\hat{\underline{b}}}$ as in Proposition \ref{yoneda}(iv),
\item[-] $a_\frp\in \delta(\A_{(\frp)})\cdot{\rm Ann}_{\A}({\rm Tor}^{G}_1(\check\Pi,\Hom_\ZZ(\mathcal{O}_{L,S',T}^\times,\ZZ))$, 
\end{itemize}
the product $a_\frp\cdot\Phi(\varepsilon_{\hat{\underline{b}}})$ belongs to $\A_{(\frp)}$ and annihilates $({_{\check\Pi}}({\rm Cl}^T_{S'}(L)^\vee))_{(\frp)}$.
\end{proposition}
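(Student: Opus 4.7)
The plan is to prove Proposition \ref{local reduction} through a sequence of reductions, ultimately invoking its own hypothesis. Since ${\rm Cl}^T_{S'}(L)$ is a finite $G$-module, both the integrality claim $\epsilon_G(\iota_\mathcal{A}(a\Phi(\varepsilon))\cdot{\rm pr}_{\check\Pi})\in\ZZ[G]$ and the annihilation claim in Theorem \ref{main result2}(iii) can be verified after localization at each prime $\frp\in{\rm Spec}(\Lambda)$. I therefore fix $\frp$ throughout and work with $(\W_T)_{(\frp)}=\W_{T,\frp}$ from \S\ref{cohdescription}. Using Lemmas \ref{tech21} and \ref{tech2} I next reduce to the case where $\{\rho_{S,S'}(\varphi_i)\}_{i\in[r]}$ spans a free $A$-module of rank $r$: fixing $m\in\NN$ strictly larger than the exponent of ${\rm Cl}^T_{S'}(L)_{(\frp)}$ and taking the corresponding $n_m$ from Lemma \ref{tech21}, Lemma \ref{tech2} produces $\{\varphi_i'\}_{i\in[r]}$ with $\varphi_i'-\varphi_i\in n_m\cdot(^{\Pi}\mathcal{O}_{L,S,T}^\times)^*$ and with $\{\rho_{S,S'}(\varphi_i')\}$ free of rank $r$. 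Lemma \ref{tech21} then gives $(\wedge_i\varphi_i)(\varepsilon)-(\wedge_i\varphi_i')(\varepsilon)\in m\cdot\mathcal{A}$; since $\chi_{\check\Pi}$ takes values in $\mathcal{A}$ (as $\check\Pi$ is locally-free over $\mathcal{A}$ under Hypothesis $(\Pi_3)$) and $\epsilon(\mathcal{A})\subseteq\ZZ$, the associated $\QQ[G]$-elements differ by an element of $m\cdot\ZZ[G]_{(\frp)}$, which annihilates ${\rm Cl}^T_{S'}(L)_{(\frp)}$ by the choice of $m$. Both claims for $\{\varphi_i\}$ therefore reduce to the same claims for $\{\varphi_i'\}$, and we may assume $\{\rho_{S,S'}(\varphi_i)\}$ is itself free of rank $r$.

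Next, Proposition \ref{yoneda}(iv) (via Lemma \ref{CRlemma}) shows $\W_{T,\frp}$ is a cyclic $\xi(\mathcal{A}_{(\frp)})$-module with generator $\varepsilon_{\hat{\underline{b}}}$, so we may write $\varepsilon=\lambda\cdot\varepsilon_{\hat{\underline{b}}}$ with $\lambda\in\xi(\mathcal{A}_{(\frp)})e_\pi$. As reduced exterior products are $\zeta(A)$-multilinear and $\lambda\in\zeta(A)$, this scalar commutes through to yield $a\cdot(\wedge_i\varphi_i)(\varepsilon)=(a\lambda)\cdot\Phi(\varepsilon_{\hat{\underline{b}}})$, where $\Phi=\wedge_i\varphi_i$. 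The identity $\delta(\mathcal{A})\cdot\xi(\mathcal{A})=\delta(\mathcal{A})$ recalled in \S\ref{Whitehead section} then implies that, after localization at $\frp$, the product $a\lambda$ lies in $\delta(\mathcal{A}_{(\frp)})\cdot{\rm Ann}_{\mathcal{A}}({\rm Tor}^{G}_1(\check\Pi,\Hom_\ZZ(\mathcal{O}_{L,S',T}^\times,\ZZ)))$, so the hypothesis of Proposition \ref{local reduction} applies with $a_\frp:=a\lambda$. This yields that $z:=(a\lambda)\Phi(\varepsilon_{\hat{\underline{b}}})$ belongs to $\mathcal{A}_{(\frp)}$ and annihilates $({_{\check\Pi}}({\rm Cl}^T_{S'}(L)^\vee))_{(\frp)}$.

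The final step translates this into the statement of Theorem \ref{main result2}(iii). Integrality is essentially immediate: since $z\in\mathcal{A}_{(\frp)}$ gives $\iota_\mathcal{A}(z)\in\mathcal{A}_{(\frp)}$, the projector ${\rm pr}_{\check\Pi}$ lies in $\mathcal{A}[G]$ (again using $\chi_{\check\Pi}(g)\in\mathcal{A}$), and $\epsilon(\mathcal{A})\subseteq\ZZ$, we obtain $\epsilon_G(\iota_\mathcal{A}(z)\cdot{\rm pr}_{\check\Pi})\in\ZZ[G]_{(\frp)}$. For the annihilation claim I would combine Pontryagin duality with the defining identity of the projector: the action of $\epsilon_G(\iota_\mathcal{A}(z)\cdot{\rm pr}_{\check\Pi})\in\ZZ[G]$ on ${\rm Cl}^T_{S'}(L)$ can be computed by pairing $\iota_\mathcal{A}(z)$ with the character $\chi_{\check\Pi}$ against class group elements, and this pairing encodes precisely the $\mathcal{A}$-action of $z$ on $\check\Pi\otimes_{\ZZ[G]}({\rm Cl}^T_{S'}(L)^\vee)$, dualized back to ${\rm Cl}^T_{S'}(L)$. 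I expect this translation step to be the main obstacle: it requires careful bookkeeping of the interactions of the anti-involutions $\iota_\mathcal{A}$ and $\iota_\#$ with ${\rm pr}_{\check\Pi}$ and $\epsilon$, and is precisely why the hypothesis of Proposition \ref{local reduction} is formulated in terms of ${_{\check\Pi}}({\rm Cl}^T_{S'}(L)^\vee)$ rather than directly in terms of ${\rm Cl}^T_{S'}(L)$.
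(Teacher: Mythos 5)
Your overall strategy matches the paper's: use Lemmas \ref{tech21} and \ref{tech2} to reduce to the case where $\{\rho_{S,S'}(\varphi_i)\}_{i\in[r]}$ spans a free module, use the cyclicity of $\W_{T,\frp}$ over $\xi(\A_{(\frp)})$ together with $\delta(\A_{(\frp)})\cdot\xi(\A_{(\frp)})=\delta(\A_{(\frp)})$ to pass from a general element of $\W_T$ to $\varepsilon_{\hat{\underline{b}}}$, apply the hypothesis, and then translate the annihilation of ${_{\check\Pi}}({\rm Cl}^T_{S'}(L)^\vee)$ into the desired statement about ${\rm Cl}^T_{S'}(L)$. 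However, there are two genuine gaps.

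First, and most seriously, the final ``translation'' step is not completed. You sketch the duality idea, explicitly flag it as the main obstacle, and leave it as an expectation; but this is precisely the content that makes the reduction non-trivial. One must actually verify that for $z\in\A$ annihilating ${_{\check\Pi}}({\rm Cl}^T_{S'}(L)^\vee)$, the element $\epsilon_G(\iota_\A(z)\cdot{\rm pr}_{\check\Pi})$ lies in $\ZZ[G]$ and annihilates ${\rm Cl}^T_{S'}(L)$. The paper defers this to a cited computation (\cite[Prop. 3.4(ii)]{bses2}), but without that input, or a worked-out argument, the proof is incomplete. Your outline doesn't engage with the actual mechanics of why the trace/projector identity converts an $\mathcal{A}$-annihilation of the twisted dual into a $\ZZ[G]$-annihilation of the class group.

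Second, your localization set-up is not right. You assert that the $\ZZ[G]$-integrality of $\epsilon_G(\iota_\mathcal{A}(a\Phi(\varepsilon))\cdot{\rm pr}_{\check\Pi})$ and the annihilation of ${\rm Cl}^T_{S'}(L)$ can be verified after localizing at each $\frp\in{\rm Spec}(\Lambda)$. But $\ZZ[G]$ and ${\rm Cl}^T_{S'}(L)$ are $\ZZ[G]$-modules, not $\Lambda$-modules, so ``localizing them at $\frp\in{\rm Spec}(\Lambda)$'' is ill-defined as stated (for example $\Lambda$ could be $\QQ$, in which case ${\rm Spec}(\Lambda)$ is a point and localization is vacuous; or $\Lambda$ could avoid some rational primes altogether). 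The paper avoids this problem by applying the hypothesis locally only to obtain $a_\frp\cdot\Phi(\varepsilon_\frp)\in\A_{(\frp)}$ with the twisted-dual annihilation, then using the defining intersections $\delta(\A)=\bigcap_\frp\delta(\A_{(\frp)})$, $\W_T=\bigcap_\frp\W_{T,\frp}$ and $\A=\bigcap_\frp\A_{(\frp)}$ to assemble the \emph{global} statement that $a\cdot\Phi(\varepsilon)\in\A$ annihilates ${_{\check\Pi}}({\rm Cl}^T_{S'}(L)^\vee)$, and only then passing to the $\ZZ[G]$-level claim by a single global computation. You should adopt this order of quantifiers. Finally, a minor point: choosing $m$ ``strictly larger than the exponent'' of the class group does not make $m$ annihilate it; you need $m$ to be a multiple of the exponent, and the cleaner choice is the paper's, namely $m\cdot\delta(\mathcal{A})\subseteq|{\rm Cl}_{S'}^T(L)|\cdot\mathcal{A}$.
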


\begin{proof} We first show that it suffices to prove Theorem \ref{main result2}(iii) for elements $\Phi$ as above.
To do this we fix an arbitrary subset $\{\varphi_i\}_{i\in[r]}$ of $(^{\Pi}\mathcal{O}_{L,S,T}^\times)^*$. We also fix $m \in \mathbb{N}$ so that $m\cdot \delta(\mathcal{A})\subseteq |{\rm Cl}_{S'}^T(L)|\cdot \mathcal{A}$. We apply Lemma \ref{tech21} to $m$ and so obtain an integer $n_m$ that can be substituted  into Lemma \ref{tech2} to give a subset $\{\varphi'_i\}_{i\in[r]}$ of $(^{\Pi}\mathcal{O}_{L,S,T}^\times)^*$ for which one has
$(\wedge_{i=1}^{i=r}\varphi_i)(\varepsilon) - (\wedge_{i=1}^{i=r}\varphi'_i)(\varepsilon) \in m\cdot\mathcal{A}$. For any $a\in \delta(\mathcal{A})$ and any additive map $\epsilon:\mathcal{A} \to \ZZ$, one therefore has
 \[  \epsilon_G\bigl(\iota_\A\bigl(a\cdot(\wedge_{i=1}^{i=r}\varphi_i)(\varepsilon)\bigr)\cdot {\rm pr}_{\check\Pi}\bigr) - \epsilon_G\bigl(\iota_\A\bigl(a\cdot(\wedge_{i=1}^{i=r}\varphi'_i)(\varepsilon)\bigr)\cdot {\rm pr}_{\check\Pi}\bigr) \in \epsilon(m\cdot \delta(\mathcal{A}))\cdot \ZZ[G].\]
Since our choice of $m$ implies $\epsilon(m\cdot \delta(\mathcal{A}))\subseteq \epsilon(|{\rm Cl}_{S'}^T(L)|\cdot\mathcal{A})
\subseteq |{\rm Cl}_{S'}^T(L)|\cdot \ZZ[G]$, this containment implies Theorem \ref{main result2}(iii) is valid for $\wedge_{i=1}^{i=r}\varphi_i$ if it is valid for  $\wedge_{i=1}^{i=r}\varphi'_i$. This verifies the claimed reduction since Lemma \ref{tech2}(ii) implies $\{\rho_{S,S'}(\varphi'_i)\}_{i\in[r]}$ spans a free $A$-module of rank $r$.

To complete the proof we next observe that for any $\frp\in{\rm Spec}(\Lambda)$ one has $\delta(\A_{(\frp)})\cdot\xi(\A_{(\frp)})=\delta(\A_{(\frp)})$ (by \cite[Lem. 3.7(iv)]{bses}) and $\W_{T,\frp}=\xi(\A_{(\frp)})\cdot\varepsilon_{\hat{\underline{b}}}$ (by Proposition \ref{yoneda}(iv)). Hence, the stated hypotheses would imply that for any element $\varepsilon_{\frp}\in \W_{T,\frp}$, the product $a_\frp\cdot\Phi(\varepsilon_{\frp})$ belongs to $\A_{(\frp)}$ and annihilates ${_{\check\Pi}}({\rm Cl}^T_{S'}(L)^\vee)_{(\frp)}$. Since $\delta(\A)$ is defined to be ${{{\bigcap}}}_{\frp}\delta(\A_{(\frp)})$ (cf. \cite[Def. 3.6]{bses}), the second equality in (\ref{WT}) would therefore imply that $a\cdot\Phi(\varepsilon)$ belongs to $\A$ and annihilates ${_{\check\Pi}}({\rm Cl}^T_{S'}(L)^\vee)$ for any $a$ and any $\varepsilon$ as in the statement of Theorem \ref{main result2}(iii).
By a routine computation, as in the proof of \cite[Prop. 3.4(ii)]{bses2}, this last fact implies $\epsilon_{G}\bigl(\iota_\A(a\cdot\Phi(\varepsilon))\cdot {\rm pr}_{\check\Pi}\bigr)$ belongs to $\ZZ[G]$ and annihilates ${\rm Cl}_{S'}^T(L)$, as required. 
\end{proof}

\subsubsection{}As the final step before proving Theorem \ref{main result2}(iii), we establish some useful properties of Weil-\'etale complexes. Before stating this result we recall that the `($S$-relative $T$-trivialised) integral dual Selmer group $\mathcal{S}_S^T(L)$ for $\GG_m$ over $L$' is defined in \cite[Def. 2.1]{bks} and lies in a 
%
canonical exact sequence
\begin{equation}\label{selmer lemma I} 0 \to {\rm Cl}_{S}^T(L)^\vee \to \mathcal{S}_S^T(L) \to \Hom_{\ZZ}(\mathcal{O}^\times_{L,S,T},\ZZ)\to 0.\end{equation}
%
%

\begin{lemma}\label{dual complex lemma} 
The complex $(_{\Pi}C_{L,S,T})^*:=R\Hom_\A({_\Pi}C_{L,S,T},\A)[-1]$ belongs to $D^{\rm lf}(\mathcal{A})$, is acyclic outside degrees zero and one, and such that there are natural identifications
\[ H^{0}((_{\Pi}C_{L,S,T})^*) = ({_{\Pi}}X_{L,S})^*\,\,\text{ and }\,\,H^{1}((_{\Pi}C_{L,S,T})^*) = {_{\check\Pi}}\mathcal{S}_S^T(L).\]
In addition, there are canonical surjections ${_{\check\Pi}}\mathcal{S}_\Sigma^T(L)\to (^{\Pi}\mathcal{O}_{L,\Sigma,T}^\times)^*$ for $\Sigma \in \{S,S'\}$ and $f_{S,S'}:{_{\check\Pi}}\mathcal{S}_S^T(L) \rightarrow {_{\check\Pi}}\mathcal{S}_{S'}^T(L)$ that make the following square commute:
\begin{equation*}
\xymatrix{ {_{\check\Pi}}\mathcal{S}_S^T(L) \ar@{->}[d]  \ar@{->}[r]^{f_{S,S'}}    & {_{\check\Pi}}\mathcal{S}_{S'}^T(L)  \ar@{->}[d]\\
(^{\Pi}\mathcal{O}_{L,S,T}^\times)^*   \ar@{->}[r]^{\rho_{S,S'}} &  (^{\Pi}\mathcal{O}_{L,S',T}^\times)^*.
} 
\end{equation*}
\end{lemma}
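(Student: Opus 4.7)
The plan is to use the explicit quasi-isomorphism $P^\bullet := [P^0 \xrightarrow{\phi} P] \to {_{\Pi}}C_{L,S,T}$ provided by Proposition~\ref{yoneda}, together with the derived adjunction $R\Hom_{\A}(\Pi\otimes^{\mathbb{L}}_{\ZZ[G]}(-),\A) \cong R\Hom_{\ZZ[G]}(-,\check\Pi)$, which is valid since $\Pi$ is $\A$-projective. Because $P^0$ and $P$ are locally-free $\A$-modules (and hence so are $P^* = \Hom_{\A}(P,\A)$ and $(P^0)^*$), the complex $R\Hom_{\A}(P^\bullet,\A)[-1]$ is canonically represented by $[P^* \xrightarrow{\phi^*} (P^0)^*]$ concentrated in degrees $0$ and $1$. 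This at once establishes membership in $D^{{\rm lf}}(\A)$, acyclicity outside degrees $0$ and $1$, and reduces the cohomological claims to analysing $\ker(\phi^*)$ and $\cok(\phi^*)$.

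For $H^0$, I would identify $\ker(\phi^*) = \Hom_{\A}(\cok(\phi),\A) = ({_{\Pi}}\mathcal{S}_S^T(L)^{\rm tr})^*$ via Proposition~\ref{yoneda}(ii). Applying the right-exact functor $\Pi\otimes_{\ZZ[G]}(-)$ to the sequence (\ref{selmer lemma II}) gives a right-exact sequence ${_{\Pi}}{\rm Cl}_S^T(L) \to {_{\Pi}}\mathcal{S}_S^T(L)^{\rm tr} \to {_{\Pi}}X_{L,S} \to 0$. Since ${_{\Pi}}{\rm Cl}_S^T(L)$ is finite (hence $\Lambda$-torsion) while $\A$ is $\Lambda$-torsion-free, every $\A$-linear map from ${_{\Pi}}\mathcal{S}_S^T(L)^{\rm tr}$ to $\A$ must annihilate the image of ${_{\Pi}}{\rm Cl}_S^T(L)$ and thereby factors through ${_{\Pi}}X_{L,S}$, giving the required equality $H^0(({_{\Pi}}C_{L,S,T})^*) = ({_{\Pi}}X_{L,S})^*$.

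For $H^1 = \cok(\phi^*)$, I propose a two-track strategy. On the one hand, combining the above adjunction with the projection formula for the perfect complex $C_{L,S,T}$ gives a canonical identification $({_{\Pi}}C_{L,S,T})^* \cong \check\Pi\otimes^{\mathbb{L}}_{\ZZ[G]}\Sigma$, with $\Sigma := R\Hom_{\ZZ[G]}(C_{L,S,T},\ZZ[G])[-1]$. The arguments of the preceding paragraphs, now applied to the locally-Gorenstein order $\ZZ[G]$ (cf.\ Example~\ref{standard exam}(ii)), identify $\Sigma$ as a two-term complex of locally-free $\ZZ[G]$-modules with $H^0(\Sigma) = (X_{L,S})^*$ and $H^1(\Sigma) = \mathcal{S}_S^T(L)$; the latter rests on the defining sequence (\ref{selmer lemma I}) together with the complex-level construction of the integral dual Selmer group in \cite[\S2]{bks}. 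Since $\check\Pi\otimes_{\ZZ[G]}(-)$ is right exact, applying it to a two-term presentation of $\Sigma$ produces $H^1(({_{\Pi}}C_{L,S,T})^*) = {_{\check\Pi}}\mathcal{S}_S^T(L)$. On the other hand, a direct computation from Proposition~\ref{yoneda}(ii), invoking Lemma~\ref{useful props remark}(i,ii) (which use local Gorenstein-ness of $\A$), yields a short exact sequence
\[ 0 \to \Ext^1_{\A}({_{\Pi}}\mathcal{S}_S^T(L)^{\rm tr},\A) \to H^1 \to ({^{\Pi}}\mathcal{O}_{L,S,T}^\times)^* \to 0, \]
from which the canonical surjection ${_{\check\Pi}}\mathcal{S}_\Sigma^T(L) \to ({^{\Pi}}\mathcal{O}_{L,\Sigma,T}^\times)^*$ is extracted for each of $\Sigma \in \{S,S'\}$.

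The commutative square then follows from functoriality: the natural morphism $C_{L,S',T} \to C_{L,S,T}$ arising from the comparison of Weil-\'etale cohomology complexes in \cite[\S2.2]{bks} (reflecting the inclusion of unit groups $\mathcal{O}_{L,S',T}^\times \subseteq \mathcal{O}_{L,S,T}^\times$) induces, after application of $R\Hom_{\A}({_{\Pi}}-,\A)[-1]$ and passage to $H^1$, the desired morphism $f_{S,S'}$, and on the quotient onto $({^{\Pi}}\mathcal{O}_{L,\bullet,T}^\times)^*$ precisely the $\A$-dualization $\rho_{S,S'}$ of this inclusion. The principal obstacle I anticipate is the precise complex-level identification $H^1(\Sigma) = \mathcal{S}_S^T(L)$ as defined in \cite[Def.~2.1]{bks}, which must be extracted from the Weil-\'etale duality formalism of \emph{loc. cit.}\ with careful attention to $\ZZ[G]$-module structures and compatibility with (\ref{selmer lemma I}); once this is in hand, all remaining pieces assemble by the adjunction-and-right-exactness formalism above.
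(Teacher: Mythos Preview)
Your approach is essentially the same as the paper's: both identify $({_{\Pi}}C_{L,S,T})^*$ with $\check\Pi\otimes^{\mathbb{L}}_{\ZZ[G]}(R\Hom_{\ZZ}(C_{L,S,T},\ZZ)[-1])$ (the paper does this directly via the natural isomorphism $\Hom_{\A}({_{\Pi}}Q,\A)\cong {_{\check\Pi}}\Hom_{\ZZ}(Q,\ZZ)$ for projective $G$-modules $Q$, which is your adjunction-plus-projection-formula spelled out), and both derive the surjection onto $({^{\Pi}}\mathcal{O}_{L,\Sigma,T}^\times)^*$ from the locally-Gorenstein hypothesis (the paper via a hyper-Ext spectral sequence collapsing by Lemma~\ref{useful props remark}, you by splicing the four-term sequence of Proposition~\ref{yoneda}(ii) and dualising each half). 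The ``principal obstacle'' you anticipate is precisely the content of \cite[Prop.~2.4(iii)]{bks}, which the paper simply cites; and for the commutative square the paper uses \cite[Prop.~2.4(ii)]{bks}, which packages the functoriality you invoke.
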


\begin{proof} Set $C := C_{L,S,T}$ and $C^* := R\Hom_\Z(C,\Z)[-1]$ in $D^{\rm lf}(\ZZ[G])$. Then Proposition \ref{yoneda}, with $\Pi=\ZZ[G]$, implies $C$ is represented by a complex $P^\bullet$ of finitely generated, locally-free $G$-modules $P^0\to P$, where the first module is placed in degree zero, and hence that ${_{\check\Pi}}C^* := \check\Pi\otimes_{\ZZ[G]}^\mathbb{L}C^*$ is represented by the complex ${_{\check\Pi}}\Hom_\ZZ(P^\bullet,\ZZ)[-1]$. This implies ${_{\check\Pi}}C^*$ belongs to $D^{\rm lf}(\mathcal{A})$, is acyclic outside degrees zero and one and $H^1({_{\check\Pi}}C^*) = {_{\check\Pi}}H^1(C^*) ={_{\check\Pi}}\mathcal{S}_S^T(L)$, where the second equality is induced by the identification $H^1(C^*) = \mathcal{S}_S^T(L)$ in \cite[Prop. 2.4(iii)]{bks}.

We next observe that ${_{\check\Pi}}C^*$ is naturally isomorphic to the complex $(_{\Pi}C)^*$. 
This is true because for any finitely generated projective $G$-module $Q$ there are natural isomorphisms
\begin{multline*}\Hom_\mathcal{A}({_{\Pi}}Q,\mathcal{A}) = \Hom_\mathcal{A}(H_0(G,{\Pi}\otimes_\ZZ Q),\mathcal{A}) \cong H^0(G,\Hom_\mathcal{A}({\Pi}\otimes_\ZZ Q,\mathcal{A})) \\ \cong H_0(G,\Hom_\mathcal{A}({\Pi}\otimes_\ZZ Q,\mathcal{A})) \cong H_0(G,\check\Pi\otimes_\ZZ \Hom_\ZZ(Q,\ZZ))\cong {_{\check\Pi}}(\Hom_\ZZ(Q,\ZZ)),\end{multline*}
where the second isomorphism is induced by the action of $T_G$ (and the fact that the $G$-module $\Hom_\mathcal{A}({\Pi}\otimes_\ZZ Q,\mathcal{A})\cong \check\Pi\otimes_\ZZ \Hom_\ZZ(Q,\ZZ)$ is cohomologically-trivial).

Finally, there is a spectral sequence ${\rm Ext}_{\mathcal{A}}^a(H^{b}({_{\Pi}}C),\mathcal{A}) \Rightarrow H^{a-b+1}((_{\Pi}C)^*)$ which combines with Lemma \ref{useful props remark} to imply that in each degree $i$ there is a natural exact sequence
\begin{equation}\label{spectral} 0 \to \Hom_\mathcal{A}(H^{-i+2}({_{\Pi}}C),A/\mathcal{A}) \to H^i((_{\Pi}C)^*) \to H^{-i+1}({_{\Pi}}C)^* \to 0\end{equation}
This sequence combines with the descriptions in Lemma \ref{Picoh} to give the claimed identification $H^0((_{\Pi}C)^*) = ({_{\Pi}}X_{L,S})^*$  and surjection ${_{\check\Pi}}\mathcal{S}_S^T(L)=H^1({_{\check\Pi}}C^*)=H^{1}((_{\Pi}C)^*)\to (^{\Pi}\mathcal{O}_{L,S,T}^\times)^*$.

The same argument applies with $C$ replaced by the complex $C_{L,S',T}$ and, in particular, the corresponding sequence of the form (\ref{spectral}) induces a surjection ${_{\check\Pi}}\mathcal{S}_{S'}^T(L)\to (^{\Pi}\mathcal{O}_{L,S',T}^\times)^*$.

Now the canonical morphism $C_{L,S,T}^*\to C^*_{L,S',T}$ in $D^{\rm lf}(\ZZ[G])$ from \cite[Prop. 2.4(ii)]{bks} induces a surjective map of $G$-modules $\mathcal{S}_S^T(L) \rightarrow \mathcal{S}_{S'}^T(L)$ and, by construction, its base-change $f_{S,S'}$ makes the claimed square commute.
\end{proof}

\begin{remark}\label{factorsthroughSel}{\em The above argument also shows that for any representative $P^\bullet=[P^0\to P]$ of ${_\Pi}C$ in $D^{\rm lf}(\A)$ as in Proposition \ref{yoneda}, the composite projection map 
\[ P^{0,*}\to H^1(({_\Pi}C)^*)= {_{\check\Pi}}\mathcal{S}_S^T(L) \to (^{\Pi}\mathcal{O}_{L,S,T}^\times)^*\]
is the linear dual of the injective map $^{\Pi}\mathcal{O}_{L,S,T}^\times\to P^0$ in Proposition \ref{yoneda}(ii). }
\end{remark}
%

\subsubsection{}\label{now turn} We now finally complete the proof of Theorem \ref{main result2}(iii). To do this, following Proposition \ref{local reduction}, we fix $\Phi=\wedge_{j=1}^{j=r}\varphi_j$ as in the latter result, $\frp\in {\rm Spec}(\Lambda)$ and $\varepsilon_{\hat{\underline{b}}}$ as in Proposition \ref{yoneda}(iv). We then choose a pre-image $\widetilde\varphi_j$ of each map $\varphi_j$ under the surjective map ${_{\check\Pi}}\mathcal{S}_S^T(L)\to (^{\Pi}\mathcal{O}_{L,S,T}^\times)^*$ in Lemma \ref{dual complex lemma} and write $\mathcal{E}_\Phi$ for the $\mathcal{A}$-module generated by $\{\widetilde\varphi_j\}_{j\in[r]}$. 

The following observation about these elements is key to our argument.

\begin{proposition}\label{ltc prop} 
There is a quadratic presentation $h_\Phi$ of the $\mathcal{A}_{(\frp)}$-module $({_{\check\Pi}}\mathcal{S}_S^T(L)/\mathcal{E}_\Phi)_{(\frp)}$ such that $\Phi(\varepsilon_{\hat{\underline{b}}})\in {\rm Fit}_{\mathcal{A}_{(\frp)}}^0(h_\Phi)$.\end{proposition}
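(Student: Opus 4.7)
The plan is to produce $h_\Phi$ by suitably modifying the dual of the explicit presentation of ${_\Pi}C_T$ furnished by Proposition \ref{yoneda}. Localise at $\frp$ and, as in the proof of Proposition \ref{yoneda}(iv), arrange $P^0_{(\frp)} = P_{(\frp)}$ and fix a basis $\hat{\underline{b}} = \{\hat b_i\}_{i \in [d]}$ satisfying Proposition \ref{yoneda}(iii); in particular $\phi^*_{(\frp)}(\hat b_i^*) = \hat b_i^* \circ \phi_{(\frp)} = 0$ for each $i \in [r]$. By the proof of Lemma \ref{dual complex lemma}, the complex $[P^*_{(\frp)} \xrightarrow{\phi^*_{(\frp)}} P^{0,*}_{(\frp)}]$ (in degrees $0,1$) represents $(_\Pi C_T)^*_{(\frp)}$, so there is a canonical identification $\cok(\phi^*_{(\frp)}) = {_{\check\Pi}}\mathcal{S}_S^T(L)_{(\frp)}$; and Remark \ref{factorsthroughSel} identifies the composite $P^{0,*}_{(\frp)} \twoheadrightarrow {_{\check\Pi}}\mathcal{S}_S^T(L)_{(\frp)} \to ({^\Pi}\mathcal{O}_{L,S,T}^\times)^*_{(\frp)}$ with $\iota^*_{(\frp)}$.

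Next, invoke Lemma \ref{useful props remark}(i) (applied to the $\Lambda$-torsion-free module $\cok(\iota) \subseteq P$) to conclude that $\iota^*_{(\frp)}$ is surjective, and lift each $\varphi_j$ to some $\widehat\varphi_j \in P^{0,*}_{(\frp)}$ whose image in ${_{\check\Pi}}\mathcal{S}_S^T(L)_{(\frp)}$ is the chosen generator $\widetilde\varphi_j$ of $\mathcal{E}_\Phi$. Define an $\A_{(\frp)}$-linear endomorphism $\tilde\phi^*$ of $P^{0,*}_{(\frp)}$ by $\tilde\phi^*(\hat b_j^*) := \widehat\varphi_j$ for $j \in [r]$ and $\tilde\phi^*(\hat b_j^*) := \phi^*_{(\frp)}(\hat b_j^*)$ for $r < j \le d$. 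The vanishing of $\phi^*_{(\frp)}$ on $\bigoplus_{j \in [r]} \A_{(\frp)} \hat b_j^*$ gives $\im(\tilde\phi^*) = \im(\phi^*_{(\frp)}) + \sum_{j \in [r]} \A_{(\frp)}\widehat\varphi_j$, so the cokernel of $\tilde\phi^*$ is canonically $({_{\check\Pi}}\mathcal{S}_S^T(L)/\mathcal{E}_\Phi)_{(\frp)}$; since both source and target are free $\A_{(\frp)}$-modules of the common rank $d$, the resulting presentation $h_\Phi$ is quadratic.

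To verify the Fitting containment, observe that the matrix of $\tilde\phi^*$ in the basis $\hat{\underline{b}}^*$ has $(i,j)$-entry equal to $\widehat\varphi_j(\hat b_i)$ when $j \in [r]$ and to $\phi_j(\hat b_i)$ when $j > r$; this is exactly the matrix $N(\phi, \{\widehat\varphi_j\}, \hat{\underline{b}})$ of (\ref{matrixN}), with the $\widehat\varphi_j$ here serving as the $\hat\varphi_j$ of loc.\ cit. (the common requirement being only that they restrict to $\varphi_j$ under $\iota^*$, which holds by construction). Hence ${\rm Fit}^0_{\A_{(\frp)}}(h_\Phi)$ is the $\xi(\A_{(\frp)})$-ideal generated by ${\rm Nrd}_A(N(\phi, \{\widehat\varphi_j\}, \hat{\underline{b}}))$, and (\ref{almost2}) then identifies this generator with $(\wedge_{j=1}^{j=r}\varphi_j)(\varepsilon_{\hat{\underline{b}}}) = \Phi(\varepsilon_{\hat{\underline{b}}})$, as required. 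The only delicate point is the double role of the $\widehat\varphi_j$: the dual identification of Lemma \ref{dual complex lemma} together with Remark \ref{factorsthroughSel} is exactly what guarantees that a single choice of $\widehat\varphi_j \in P^{0,*}_{(\frp)}$ serves simultaneously as a lift of $\widetilde\varphi_j \in {_{\check\Pi}}\mathcal{S}_S^T(L)_{(\frp)}$ (needed to identify the cokernel of $\tilde\phi^*$) and as an admissible $\hat\varphi_j$ for the Fitting matrix (needed to trigger (\ref{almost2})).
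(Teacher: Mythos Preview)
Your proof is correct and arrives at the same presentation as the paper, but by a more direct route. The paper constructs an exact triangle $\mathcal{A}_{(\frp)}^{\oplus r,\bullet} \xrightarrow{\theta} (_{\Pi}C)^*_{(\frp)} \to D^\bullet \to$ in $D^{\rm lf}(\mathcal{A}_{(\frp)})$, computes the mapping cone of $\theta$ explicitly, and then uses a diagram chase (splitting off an acyclic summand) to reduce this cone to the complex $[P^*_{(\frp)}\xrightarrow{\nu}P^*_{(\frp)}]$; your map $\tilde\phi^*$ is exactly the paper's $\nu$. You bypass the derived-category step entirely by observing that, because $\phi^*_{(\frp)}$ kills the first $r$ basis vectors, one may simply overwrite those (zero) columns with lifts $\widehat\varphi_j$ of the chosen $\widetilde\varphi_j$, and the resulting cokernel is then visibly $({_{\check\Pi}}\mathcal{S}_S^T(L)/\mathcal{E}_\Phi)_{(\frp)}$. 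Both arguments conclude by identifying the presentation matrix with $N(\phi,\{\hat\varphi_i\},\hat{\underline{b}})$ (up to transpose, which is harmless for ${\rm Nrd}_A$) and invoking (\ref{almost2}). Your approach is more elementary and transparent; the paper's triangle-based approach is more conceptual and makes the role of $H^0(D^\bullet)=\ker(\pi_{(\frp)})^*$ explicit, which could be useful in variants. One small remark: your appeal to Lemma \ref{useful props remark}(i) for the surjectivity of $\iota^*_{(\frp)}$ is not actually needed, since the lift of $\widetilde\varphi_j$ to $P^{0,*}_{(\frp)}$ exists simply because the cokernel map is surjective, and Remark \ref{factorsthroughSel} then automatically makes this lift restrict to $\varphi_j$ under $\iota^*_{(\frp)}$.
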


\begin{proof} We use the existence of an exact triangle in $D^{\rm lf}(\mathcal{A}_{(\frp)})$ of the form
\begin{equation}\label{triangle} \mathcal{A}_{(\frp)}^{\oplus r,\bullet} \xrightarrow{\theta} (_{\Pi}C)^*_{(\frp)} \xrightarrow{\theta'} D^\bullet \to \mathcal{A}_{(\frp)}^{\oplus r,\bullet}[1]. \end{equation}
Here $\mathcal{A}_{(\frp)}^{\oplus r,\bullet}$ denotes the complex $\mathcal{A}_{(\frp)}^{\oplus r}[0]\oplus \mathcal{A}_{(\frp)}^{\oplus r}[-1]$ with $\mathcal{A}_{(\frp)}^{\oplus r}$ the direct sum of $r$ copies of $\mathcal{A}_{(\frp)}$. Also, writing $\{c_i\}_{i\in[r]}$ for the canonical basis of $\mathcal{A}_{(\frp)}^{\oplus r}$, the morphism $\theta$ is uniquely specified by the condition that for each $j\in\{0,1\}$ one has

\[ H^j(\theta)(c_i) = \begin{cases} b_{\frp,i}^*\in Y_{\pi,(\frp)}^* \subset ({_\Pi}X_{L,S})_{(\frp)}^* = H^0((_{\Pi}C)^*_{(\frp)}), &\text{if $j=0$}\\
                 \widetilde\varphi_i\in {_{\check\Pi}}\mathcal{S}_S^T(L)_{(\frp)} = H^1((_{\Pi}C)^*_{(\frp)}), &\text{if $j=1$},\end{cases}\]
with the inclusion in the first line induced by the dual of $\pi_{(\frp)}$, and with $\underline{b}_{\pi,\frp}=\{b_{\frp,i}\}_{i\in[r]}$ the fixed $\A_{(\frp)}$-basis of $Y_{\pi,(\frp)}$, a fixed lift $\hat{\underline{b}}=\{\hat{b}_i\}_{i\in[r]}$ of which has been used to construct the element $\varepsilon_{\hat{\underline{b}}}$ in Proposition \ref{yoneda}(iv). With this definition the exact cohomology sequence of the triangle (\ref{triangle}) implies $D^\bullet$ is acyclic outside degrees zero and one and induces identifications
\begin{equation}\label{Dcoh} H^0(D^\bullet) = \ker(\pi_{(\frp)})^*\,\,\text{ and }\,\, H^1(D^\bullet) =                                  ({_{\check\Pi}}\mathcal{S}_S^T(L)/\mathcal{E}_\Phi)_{(\frp)}.\end{equation}
%

We now consider the following diagram of $\A_{(\frp)}$-modules.

\begin{equation*}
\xymatrix{ \mathcal{A}_{(\frp)}^{\oplus r}  \ar@{->}[rr]^{{\rm id}}  & & \mathcal{A}_{(\frp)}^{\oplus r} &  & \\
\mathcal{A}_{(\frp)}^{\oplus r} \ar@{->}[u]^{{\rm id}}  \ar@{->}[rr]^{(0,\theta^0)} &  &  \mathcal{A}_{(\frp)}^{\oplus r}\oplus P_{(\frp)}^*  \ar@{->>}[u]_{(-{\rm id},\mu)} \ar@{->}[rr]^{(\theta^1,\phi_{(\frp)}^*)}  & & P_{(\frp)}^*\\
& &  P_{(\frp)}^* \ar@{^{(}->}[u]^{(\mu,{\rm id})} \ar@{->}[rr]^{\nu}  & & P_{(\frp)}^* \ar@{->}[u]_{{\rm id}} .
} 
\end{equation*}
Here $P^0\stackrel{\phi}{\longrightarrow} P$ is the representative of ${_\Pi}C$ in $D^{\rm lf}(\A)$ as in Proposition \ref{yoneda} that we have used in the construction of given element $\varepsilon_{\hat{\underline{b}}}$, after assuming that $P^0_{(\frp)}=P_{(\frp)}$ and fixing a lift $\hat{\underline{b}}=\{\hat{b}_i\}_{i\in [r]}$ of $\underline{b}$ as in Proposition \ref{yoneda}(iii). In addition, we have set
$$\mu(\hat{b}_i^*):=\begin{cases}c_i,\,\,\,\,\,\,1\leq i\leq r,\\ 0,\,\,\,\,\,\,\,\,r<i\leq d,\end{cases}\,\,\text{ and }\,\,\nu(\hat{b}_i^*):=\begin{cases}\theta^1(c_i),\,\,\,\,\,\,1\leq i\leq r,\\ \phi_i,\,\,\,\,\,\,\,\,\,\,\,\,\,\,\,r<i\leq d.\end{cases}$$
The above diagram then commutes because the choice of basis $\hat{\underline{b}}$ implies (via the final assertion of Proposition \ref{yoneda}(iii)) that, for each $i\in [r]$, one has $\phi_i:=\hat{b}_i^*\circ\phi_{(\frp)}=\phi_{(\frp)}^*(\hat{b}_i^*)=0$. 

Now $(_{\Pi}C)^*_{(\frp)}$ is represented by the complex $P^*_{(\frp)}\to P_{(\frp)}^*$, in which the first term is in degree zero and the differential is $\phi_{(\frp)}^*$. The central row of the above diagram thus represents the mapping cone of the morphism $\theta$ in (\ref{triangle}) and so is isomorphic in $D^{\rm lf}(\A_{(\frp)})$ to $D^\bullet$. Hence, since the upper row of the diagram is an acyclic complex, its commutativity and the exactness of its columns implies $D^\bullet$ is isomorphic in $D^{\rm lf}(\A)$ to the complex given by the lower row of the diagram. From (\ref{Dcoh}) we deduce the existence of a quadratic presentation of $\A_{(\frp)}$-modules
$$P_{(\frp)}^*\stackrel{\nu}{\longrightarrow}P_{(\frp)}^*\longrightarrow ({_{\check\Pi}}\mathcal{S}_S^T(L)/\mathcal{E}_\Phi)_{(\frp)}\to 0.$$

It is thus enough to show $\Phi(\varepsilon_{\hat{\underline{b}}}) = {\rm Nrd}_A(N)$, with $N$ the matrix of $\nu$ with respect to the basis $\hat{\underline{b}}^*=\{\hat{b}_i^*\}_{i\in [d]}$ of $P^*_{(\frp)}$. To do this, we note Lemma \ref{useful props remark}(iv) implies ${\rm Ext}^1_{\A_{(\frp)}}({\rm cok}(\iota_{(\frp)}),\A_{(\frp)})$ vanishes, where $\iota$ is the canonical injection ${^{\Pi}}\co^\times_{L,S,T}=H^0({_\Pi}C)\hookrightarrow P^0$. As in the proof of Proposition \ref{yoneda}(iv), we can therefore fix pre-images $\hat{\varphi}_j$ of each $\varphi_j$ under the `restriction through $\iota_{(\frp)}$' map
$\Hom_{\A_{(\frp)}}(P_{(\frp)},\A_{(\frp)}) \to \Hom_{\A_{(\frp)}}(({^\Pi}\mathcal{O}_{L,S,T}^\times)_{(\frp)},\A_{(\frp)})$. Then Remark \ref{factorsthroughSel} combines with the definition of $\theta$ to imply $\nu(\hat{b}_i^*)=\theta^1(c_i)=\hat{\varphi}_i$ for each $i\in[r]$. It follows that  $N$ is the transpose of the matrix $N(\phi,\{\hat\varphi_i\},\hat{\underline{b}})$ defined in (\ref{matrixN}) and therefore the explicit description given in (\ref{almost2}) implies that $\Phi(\varepsilon_{\hat{\underline{b}}})={\rm Nrd}_A(N^{\rm tr})={\rm Nrd}_A(N)$, as required.
\end{proof}

Upon combining Proposition \ref{ltc prop} with the general result \cite[Th. 3.20(iii)]{bses}, one deduces
\begin{equation}\label{getting there} \delta(\mathcal{A}_{(\frp)})\cdot \Phi(\varepsilon_{\hat{\underline{b}}}) \subseteq {\rm Ann}_{\mathcal{A}_{(\frp)}}(({_{\check\Pi}}\mathcal{S}_S^T(L)/\mathcal{E}_\Phi)_{(\frp)}).\end{equation}
To deduce Theorem \ref{main result2}(iii) from this inclusion (and Proposition \ref{local reduction}), we observe that the commutativity of the square in Lemma \ref{dual complex lemma} combines with 
our choice of $\Phi$ to imply that the lattice $(\mathcal{E}_\Phi)_{(\frp)}$ is disjoint from the kernel of the induced surjection of $\mathcal{A}_{(\frp)}$-modules $(f_{S,S'})_{(\frp)}:({_{\check\Pi}}\mathcal{S}_S^T(L))_{(\frp)} \rightarrow ({_{\check\Pi}}\mathcal{S}_{S'}^T(L))_{(\frp)}$. One thus obtains a surjection of $\mathcal{A}_{(\frp)}$-modules
\begin{equation}\label{getting there2} ({_{\check\Pi}}\mathcal{S}_S^T(L)/\mathcal{E}_\Phi)_{(\frp)} \twoheadrightarrow ({_{\check\Pi}}\mathcal{S}_{S'}^T(L))_{(\frp)}/\bigl((f_{S,S'})_{(\frp)}((\mathcal{E}_\Phi)_{(\frp)})\bigr).\end{equation}
In addition, the exact sequence of $G$-modules (\ref{selmer lemma I}) induces an exact sequence of $\mathcal{A}_{(\frp)}$-modules
%
%
\[ {\rm Tor}^{G}_1(\check\Pi,\Hom_\ZZ(\mathcal{O}_{L,S',T}^\times,\ZZ))_{(\frp)} \xrightarrow{} ({_{\check\Pi}}({\rm Cl}_{S'}^T(L)^\vee))_{(\frp)} \to ({_{\check\Pi}}\mathcal{S}_{S'}^T(L))_{(\frp)}/\bigl((f_{S,S'})_{(\frp)}((\mathcal{E}_\Phi)_{(\frp)})\bigr),\]
and this combines with the surjection (\ref{getting there2}) and containment (\ref{getting there}) to imply the condition in Proposition \ref{local reduction} is satisfied. This therefore completes the proof of Theorem \ref{main result2}.

\end{document}